\documentclass[10pt]{article}

\usepackage[dvipsnames]{xcolor}
\usepackage{leftidx}
\usepackage{amsmath}
\usepackage{amscd}
\usepackage{amssymb}
\usepackage{rotating}
\usepackage{amsfonts}
\usepackage{amsthm}
\usepackage{verbatim}
\usepackage{stmaryrd}
\usepackage{mathrsfs}
\usepackage{color}
\usepackage[all]{xy}

\usepackage[top=1in, bottom=1in, left=1.25in, right=1.25in]{geometry}

\usepackage{titlesec}
       \titleformat{\chapter}[display]
             {\normalfont\Large\bfseries}{\thechapter}{11pt}{\Large}
       \titleformat{\section}
             {\normalfont\large\bfseries}{\thesection}{11pt}{\large}
       \titlespacing*{\chapter}{0pt}{0pt}{15pt} 
       \titlespacing*{\section}{0pt}{3.5ex plus 1ex minus .2ex}{2.3ex plus .2ex}

    \usepackage[titletoc]{appendix}

\usepackage[colorlinks,linkcolor=black, anchorcolor=blue, urlcolor=blue, citecolor=blue]{hyperref}
\usepackage{cleveref} 
 \usepackage[hyperpageref]{backref}



\allowdisplaybreaks[1]

\DeclareMathAlphabet{\mathscrbf}{OMS}{mdugm}{b}{n}

\newcommand{\pqed}{\hfill\qedsymbol\\}

\newcommand{\Spec}{\mathrm{Spec}}

\newcommand{\st}{st}

\newtheorem{theorem}{Theorem}[section]
\newtheorem{theorem/definition}{Theorem/Definition}[section]

\newtheorem{proposition}[theorem]{Proposition}
\newtheorem{lemma}[theorem]{Lemma}

\newtheorem{corollary}[theorem]{Corollary}

\newtheorem{conjecture}[theorem]{Conjecture}

\theoremstyle{remark}
\newtheorem{remark}[theorem]{Remark}

\theoremstyle{definition}
 \newtheorem{example}[theorem]{Example}
\newtheorem{definition}[theorem]{Definition}
\newtheorem{construction}[theorem]{Construction}

\begin{document}
\title
{\large{\textbf{An inclusion-exclusion principle for tautological sheaves on Hilbert schemes of points}}}
\author{\normalsize Xiaowen Hu}
\date{}
\maketitle

\begin{abstract}
We show an equation of Euler characteristics of tautological sheaves on Hilbert schemes of points on the fibers of a double point degeneration. This equation resembles a computation of such Euler characteristics via a combinatorial inclusion-exclusion principle. As a consequence, we show the existence of universal polynomials for the Euler characteristics of tautological sheaves on the Hilbert scheme of points on smooth proper algebraic spaces.  We apply this result to a conjecture of Zhou on tautological sheaves on Hilbert schemes of points, and reduce the conjecture to the cases of products of projective spaces. Our main tools are good degenerations and algebraic cobordism.
\end{abstract}





\tableofcontents

\section{Introduction}
Let $C$ be a smooth curve over a base field $\Bbbk$, $\mathfrak{X}$ be a smooth scheme (or more generally, a smooth algebraic space) over $\Bbbk$ and $\pi:\mathfrak{X}\rightarrow C$ a proper and flat morphism. Let $0$ be a $\Bbbk$-point of $C$. We say that $\pi$ is a \emph{double point degeneration} if $\pi$ is smooth away from $0$, and $\pi^{-1}(0)$ is a normal crossing divisor which can be written as $Y_1\cup Y_2$ such that $Y_1$ and $Y_2$ are smooth divisors and  $D=Y_1\cap Y_2$ is smooth. Let $\xi$ be a $\Bbbk$-point of $C\setminus\{0\}$, and $X$ be the smooth fiber $\mathfrak{X}_{\xi}$. By abuse of notations, we say that $X\overset{\mathfrak{X}}{\rightsquigarrow}Y_1\cup_D Y_2$ is a double point degeneration when we want to emphasize the fibers, rather than the total space.

For a $\Bbbk$-scheme $Y$, we are going to study the Euler characteristics of  tautological sheaves on $\mathrm{Hilb}^n(Y)$, the Hilbert scheme of $n$-points on $Y$. By definition there is a universal closed subscheme $\mathcal{Z}$ of $\mathrm{Hilb}^n(Y)\times Y$ and a diagram
\[
\xymatrix{
	\mathcal{Z} \ar[r]^{q} \ar[d]^{p} & Y \\
	\mathrm{Hilb}^n(Y) & .
}
\]
For a vector bundle $E$ on a $\Bbbk$-scheme $Y$, we define $E^{[n]}=p_*q^* E$. It is a vector bundle on $\mathrm{Hilb}^n(Y)$, and  we call it the \emph{tautological sheaf associated with} $E$. 

For a vector bundle $G$ on a scheme $Y$ and a formal variable $u$, we define a polynomial in $u$ with coefficients in $K(Y)$:
\[
\Lambda_{-u}G:=\sum_{i=0}^{\mathrm{rank}\ G}(-u)^i \wedge^iG
\]
Moreover, for two vector bundles $G_1$ and $G_2$, we define
\[
\chi(G_1,G_2):=\sum_{i=0}^{\infty}(-1)^i \dim_{\Bbbk} \mathrm{Ext}^{i}_{\mathcal{O}_Y}(G_1,G_2).
\]
When $Y$ is a proper scheme and $G_1$ is locally free, this is a finite sum and thus $\chi(G_1,G_2)$ is well-defined. 

In this paper we prove the following \emph{inclusion-exclusion principle}.
\begin{theorem}\label{thm-intro-inc-exc-principle}
Let $\Bbbk$ be a field of characteristic zero. Let $X$, $Y_1$, $Y_2$ and $D$ be smooth proper algebraic spaces over $\Bbbk$, and $C$ a smooth curve over $\Bbbk$. 
Let $X\overset{\mathfrak{X}}{\rightsquigarrow}Y_1\cup_D Y_2$ be a double point  degeneration with total space $\mathfrak{X}\rightarrow C$. Let $E$ and $F$ two vector bundles on $\mathfrak{X}$. 
Let $\mathbb{P}_D=\mathbb{P}(N_{Y_1/D}\oplus \mathcal{O}_D)$. Then 
\begin{eqnarray*}
&&\log\Big(1+\sum_{n=1}^{\infty}\chi\big(\Lambda_{-u}(E|_{X}^{[n]}),
\Lambda_{-v}(F|_{X}^{[n]})\big)Q^n\Big)\\\
&=&\log\Big(1+\sum_{n=1}^{\infty}\chi\big(\Lambda_{-u}(E|_{Y_1}^{[n]}),
\Lambda_{-v}(F|_{Y_1}^{[n]})\big)Q^n\Big)
+\log\Big(1+\sum_{n=1}^{\infty}\chi\big(\Lambda_{-u}(E|_{Y_2}^{[n]}),
\Lambda_{-v}(F|_{Y_2}^{[n]})\big)Q^n\Big)\\
&&-\log\Big(1+\sum_{n=1}^{\infty}\chi\big(\Lambda_{-u}(E|_{\mathbb{P}_D}^{[n]}),
\Lambda_{-v}(F|_{\mathbb{P}_D}^{[n]})\big)Q^n\Big),
\end{eqnarray*}
as formal series of $u,v$ and $Q$. Here $E|_{\mathbb{P}_D}$ (resp. $F|_{\mathbb{P}_D}$) is the pullback of $E$ (resp. $F$) via the composition $\mathbb{P}_D\twoheadrightarrow D\hookrightarrow \mathfrak{X}$.
\end{theorem}


Let us explain the title. An equality of series of $Q$
\begin{equation}\label{eq-inclusion-exclusion-0}
\log(1+\sum_{i=1}^{\infty}c_i Q^i)=\log(1+\sum_{i=1}^{\infty}a_i Q^i)+\log(1+\sum_{i=1}^{\infty}b_i Q^i)-\log(1+\sum_{i=1}^{\infty}d_i Q^i)
\end{equation}
regarded as a system of equations for $a_i,b_i,c_i,d_i$, $i\geq 1$, 
is equivalent to
\begin{eqnarray*}
c_1&=&a_1+b_1-d_1,\\
c_2&=&a_2+b_2+a_1 b_1-a_1d_1-b_1d_1-d_2+d_1^2,\\
c_3&=&a_3 + a_2 b_1 + a_1 b_2 + b_3 - a_2 d_1- a_1 d_2- d_3 - a_1 b_1 d_1- b_1 d_2 - b_2 d_1\\
&&+ a_1 d_1^2 + b_1 d_1^2   + 2 d_1 d_2- d_1^3,\\
c_4&=& a_4 + a_3 b_1 + a_2 b_2 + a_1 b_3 + b_4\\
&& - a_3 d_1 - a_2 d_2- a_1 d_3- d_4
- a_2 b_1 d_1   - a_1 b_1 d_2- b_1 d_3 - a_1 b_2 d_1 - b_2 d_2- b_3 d_1\\
 && + a_2 d_1^2 + 2 a_1 d_1 d_2+ 2 d_1 d_3 + d_2^2   + a_1 b_1 d_1^2+ 2 b_1 d_1 d_2 + b_2 d_1^2 \\
 &&- a_1 d_1^3 - 3 d_1^2 d_2- b_1 d_1^3 + d_1^4, 
\end{eqnarray*}
and so on. The intuition for the corresponding formulae of Euler characteristics is the following. For example, to compute $\chi(\Lambda_{-u}E^{[2]},\Lambda_{-v}F^{[2]})$ on the special fiber, it seems reasonable to sum the contributions from the space of two points on $Y_1$, and that from two points on $Y_2$, and that of the first point on $Y_1$ and the second on $Y_2$; then subtract the contributions arising from repeated counting, i.e. the first point on $Y_1$ and the second  on $D$, the first  on $Y_2$ and the second on $D$, two points on $D$; finally we need to add back the repeated subtracted contributions from the first and the second points both on $D$. 
This is reminiscent of the classical inclusion-exclusion principle in enumerations. 
More generally, for $l\geq 1$, let 
\[
S_l(n)=\{\big((i_1,j_1),\dots,(i_l,j_l)\big)\in (\mathbb{Z}_{\geq 0}^2)^l|
i_1>i_2>\dots>i_l,\ i_k+j_k=n\ \mbox{for}\ 1\leq k\leq l
\}.
\]
\begin{lemma}\label{lem-intro-inc-exc}
The equation (\ref{eq-inclusion-exclusion-0}) is equivalent to
\begin{equation}\label{eq-inclusion-exclusion-1}
c_n=\sum_{l=1}^{n+1}\big((-1)^{l-1}\sum_{S_l(n)}a_{i_l}d_{i_1-i_2}d_{i_2-i_3}\cdots d_{i_{l-1}-i_l}b_{j_1}\big),
\end{equation}
for all $n\geq 1$.
\end{lemma}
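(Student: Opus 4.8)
The plan is to convert the identity (\ref{eq-inclusion-exclusion-0}) into a multiplicative identity of formal power series and then compare coefficients of $Q^n$. Treat $a_i,b_i,c_i,d_i$ ($i\geq 1$) as indeterminates over $\mathbb{Q}$ and work in $R[[Q]]$ with $R=\mathbb{Q}[a_i,b_i,c_i,d_i:i\geq 1]$; set $A(Q)=1+\sum_{i\geq 1}a_iQ^i$ and define $B(Q)$, $C(Q)$, $D(Q)$ analogously. Each has constant term $1$, so its logarithm lies in $QR[[Q]]$ and $\exp$, $\log$ are mutually inverse bijections between series with constant term $1$ and series with zero constant term. Hence (\ref{eq-inclusion-exclusion-0}) is equivalent to $\log C+\log D=\log A+\log B$, i.e. to $C(Q)D(Q)=A(Q)B(Q)$, i.e. to $C(Q)=A(Q)B(Q)D(Q)^{-1}$. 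Taking the coefficient of $Q^n$ (and noting the $n=0$ coefficient is the trivial identity $1=1$), the system (\ref{eq-inclusion-exclusion-0}) is equivalent to the family
\[
c_n=[Q^n]\big(A(Q)B(Q)D(Q)^{-1}\big),\qquad n\geq 1,
\]
and it remains to match this with the sum over $S_l(n)$.

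I would then expand $D(Q)^{-1}$ by the geometric series,
\[
D(Q)^{-1}=\sum_{k\geq 0}(-1)^k\Big(\sum_{i\geq 1}d_iQ^i\Big)^k,
\]
so that $[Q^m]D(Q)^{-1}=\sum_{k\geq 0}(-1)^k\sum_{\delta_1+\cdots+\delta_k=m}d_{\delta_1}\cdots d_{\delta_k}$, the inner sum running over compositions of $m$ into $k$ positive parts (with the empty composition giving $1$ for $m=0$). Convolving the three series, and using $a_0=b_0=1$, gives
\[
[Q^n]\big(ABD^{-1}\big)=\sum_{\substack{p+q+m=n\\p,q,m\geq 0}}a_pb_q\sum_{k\geq 0}(-1)^k\sum_{\delta_1+\cdots+\delta_k=m}d_{\delta_1}\cdots d_{\delta_k}.
\]

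The last step, and the only one requiring care, is a reindexing bijection identifying this triple sum with $\sum_{l}(-1)^{l-1}\sum_{S_l(n)}(\cdots)$. Given $p,q\geq 0$ and a composition $\delta_1,\dots,\delta_k$ of $m=n-p-q$ into $k$ positive parts, put $l=k+1$, $i_l=p$, and $i_r=p+\delta_{l-1}+\delta_{l-2}+\cdots+\delta_r$ for $r=1,\dots,l-1$, equivalently $\delta_r=i_r-i_{r+1}$; then $n\geq i_1>i_2>\cdots>i_l\geq 0$ with $i_1=n-q$, and setting $j_k=n-i_k$ produces exactly an element of $S_l(n)$. Conversely every element of $S_l(n)$ arises uniquely in this way, since the $j_k$ are determined by the $i_k$ and the strict inequalities are equivalent to the positivity of the differences $\delta_r=i_r-i_{r+1}$. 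Under the bijection $p=i_l$, $q=j_1$, the number of $d$-factors is $k=l-1$ (so the sign $(-1)^k$ becomes $(-1)^{l-1}$), and $a_pb_q\,d_{\delta_1}\cdots d_{\delta_k}=a_{i_l}\,d_{i_1-i_2}\cdots d_{i_{l-1}-i_l}\,b_{j_1}$. Since a composition of $m$ into positive parts has between $0$ and $m\leq n$ parts, $l$ runs over $1,\dots,n+1$. Substituting into the previous display yields (\ref{eq-inclusion-exclusion-1}), establishing the equivalence. The main obstacle is purely the bookkeeping of this reindexing; there is no analytic subtlety.
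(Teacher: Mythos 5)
Your proof is correct. The paper states lemma \ref{lem-intro-inc-exc} without proof, and your argument—exponentiating (\ref{eq-inclusion-exclusion-0}) to the multiplicative identity $C=ABD^{-1}$, expanding $D^{-1}$ as a geometric series, and reindexing compositions of $m=n-p-q$ by the strictly decreasing sequences $n\geq i_1>\cdots>i_l\geq 0$ (with the convention $a_0=b_0=1$)—is precisely the intended elementary computation; the bijection with $S_l(n)$, the sign $(-1)^{l-1}=(-1)^k$, and the bound $l\leq n+1$ all check out, as one can confirm against the displayed expressions for $c_1,\dots,c_4$.
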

This formula suggests a decomposition of a certain specialization of the Hilbert scheme of points. In this paper we realize this idea to prove Theorem 
\ref{thm-intro-inc-exc-principle} using Li-Wu's good degeneration \cite{LW15}. 

By the theory of algebraic cobordism of vector bundles (\cite{LeeP12}), we deduce from Theorem \ref{thm-intro-inc-exc-principle} the existence of \emph{universal polynomials}. 
Let us introduce some notations. 
Let $\mathcal{C}_{n,r,s}$ be the vector space of degree $n$  polynomials in the ring  
\[
\mathbb{Q}[u_1,\dots,u_n,v_1,\dots,v_{r},w_1,\dots,w_{s}].
\]
Let $E$ and $F$ be vector bundles on $X$ of ranks $r$ and $s$ respectively. 
We evaluate each monomial 
 \begin{equation*}\label{eq-intro-basis-chernpolynomial}
 u_1^{k_1}\cdots u_n^{k_n}\cdot v_1^{l_1}\cdots v_{r}^{l_r}
 \cdot w_1^{m_1}\cdots w_s^{m_{s}}.
 \end{equation*}
 in $\mathcal{C}_{n,r,s}$ by
 \begin{equation}\label{eq-intro-evaluate-chernpolynomial}
 \int_{X}c_1(T_X)^{k_1}\cdots c_{n}(T_X)^{k_n}\cdot c_1(E)^{l_{1}}\cdots 
 c_{r}(E)^{l_{r}}\cdot c_1(F)^{m_{1}}\cdots c_{s}(F)^{m_{s}}.
 \end{equation}
This evaluation extends to $\mathcal{C}_{n,r,s}$ linearly. For $f\in \mathcal{C}_{n,r,s}$, we denote the corresponding integrand by $\Phi_f(E,F)$. 
\begin{theorem}\label{thm-intro-universalSeries}(=Corollary \ref{cor-universalSeries})
Let $\Bbbk$ be a field of characteristic 0.
Suppose given natural numbers $d$, $r_1$ and $r_2$. Then 
\begin{enumerate}
	\item[(i)] there exists a series of polynomials $f_{i,j,k}\in \mathcal{C}_{d,r,s}$, for $i\geq 1$ and $j,k\geq 0$, such that for any smooth proper algebraic space $X$ of pure dimension $d$ and vectors bundles $E$ and $F$, with $\mathrm{rank}(E)=r$ and $\mathrm{rank}(F)=s$, we have
	\begin{equation}\label{eq-intro-universalSeries-1}
		1+\sum_{n=1}^{\infty}\chi(\Lambda_{-u}E^{[n]},\Lambda_{-v}F^{[n]})Q^n
		=\exp\Big(\sum_{i=1}^{\infty}\sum_{j=0}^{\infty}\sum_{k=0}^{\infty} Q^i u^j v^k 
		\int_X \Phi_{f_{i,j,k}}(E,F)
		\Big);
	\end{equation}
	\item[(ii)] given a series of polynomials $f_{i,j,k}\in \mathcal{C}_{d,r,s}$, for $i\geq 1$ and $j,k\geq 0$, to verify that (\ref{eq-intro-universalSeries-1}) holds for all smooth proper algebraic space $X$ of pure dimension $d$ over $\Bbbk$ and vector bundles $E$ and $F$, with $\mathrm{rank}(E)=r$ and $\mathrm{rank}(F)=s$, it suffices to verify it for all triples $(X,E,F)$ of the form
	\begin{equation}\label{eq-test-triples}
	\begin{cases}
		X=\mathbb{P}^{d_1}\times\cdots\times \mathbb{P}^{d_l},\ d_1+\dots+d_l=d,\\
		E=\bigoplus_{i=1}^l \pi_i^* L_i,\ 
		F=\bigoplus_{i=1}^l \pi_i^* M_i,\ \mbox{with}\ L_i\ \mbox{and}\ M_i \in 
		\{0, \mathcal{O}_{\mathbb{P}^d_i}, \mathcal{O}_{\mathbb{P}^d_i}(1)\}\\
		\mbox{such that}\ \mathrm{rank}(E)=r\ \mbox{and}\ \mathrm{rank}(F)=s.
	\end{cases}	
	\end{equation}
\end{enumerate}
\end{theorem}

In fact in (ii) it suffices to verify for a smaller set of triples $\mathcal{P}_{d,r,s}$ defined in Section \ref{sec:alg-cobordism}. 

For Hilbert schemes of points on surfaces, one can deduce Theorem \ref{thm-intro-universalSeries} from the existence of  \emph{universal polynomials for tautological integrals}  \cite[Theorem 4.1]{EGL01}. In higher dimensions there is a similar theorem of Rennemo  (\cite[Theorem 1.1]{Ren17}) on the integrals of the Chern classes of tautological sheaves and the Chern–Schwartz–MacPherson classes or Chern-Mather classes of the Hilbert scheme of points. But Rennemo's theorem  does NOT imply Theorem \ref{thm-intro-universalSeries}, because the Todd classes of singular schemes, which appear in the Riemann-Roch theorem for singular schemes, cannot be expressed in a universal way as a combination of Chern–Schwartz–MacPherson classes or Chern-Mather classes. The reason is well-known: the algebraic Euler characteristics are invariant in a flat family, while the topological ones are not. Another reason is, that the various types of singular Chern classes depend only on the reduced structure of the underlying schemes, which is not true for the Euler characteristics of coherent sheaves. Consequently, Rennemo's proof, which is based on  locally closed stratifications of Hilbert schemes, does not work for Theorem \ref{thm-intro-universalSeries}.
\\

Now we introduce an application of Theorem \ref{thm-intro-universalSeries}. The following is a conjecture proposed by Jian Zhou (see \cite{WZ14}). For surfaces it is shown by Wang-Zhou \cite{WZ14}; the case $u=0$ is a direct consequence of \cite[Theorem 2.4.5]{Sca09} (see also \cite{Kru18}). The conjecture fails for curves, as observed by Krug \cite{Kru18}). See also \cite{Wan16} for related problems for curves.
\begin{conjecture}\label{conj-1}
Let $X$ be a smooth projective scheme over $\Bbbk$ of dimension $d\geq 2$. Let $K,L$ be line bundles on $X$. Then
\begin{equation}\label{eq-conj-1}
1+\sum_{n=1}^{\infty}\chi(\Lambda_{-u}K^{[n]},\Lambda_{-v}L^{[n]})Q^n=
\exp \Big(\sum_{r=1}^{\infty}\chi(\Lambda_{-u^r}K, \Lambda_{-v^r}L)\frac{Q^r}{r}\Big).
\end{equation}
\end{conjecture}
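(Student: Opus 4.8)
The plan is to show that both sides of (\ref{eq-conj-1}), viewed as invariants of the decorated triple $(Y,K,L)$, obey the double point degeneration relation, and then to reduce the identity to an explicit generating set of algebraic cobordism. Write
\[
\Phi_Y:=\log\Big(1+\sum_{n=1}^{\infty}\chi(\Lambda_{-u}K^{[n]},\Lambda_{-v}L^{[n]})Q^n\Big),
\qquad
\Psi_Y:=\sum_{r=1}^{\infty}\chi(\Lambda_{-u^r}K,\Lambda_{-v^r}L)\frac{Q^r}{r},
\]
so that (\ref{eq-conj-1}) for $X$ is the equality $\Phi_X=\Psi_X$; put $\Delta_Y:=\Phi_Y-\Psi_Y$. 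For any simple degeneration with generic fibre $X_\xi$ and special fibre $Y_1\cup_D Y_2$, Theorem \ref{thm-intro-inc-exc-principle} reads $\Phi_{X_\xi}=\Phi_{Y_1}+\Phi_{Y_2}-\Phi_{\mathbb{P}_D}$; together with the additivity $\Phi_{Y\sqcup Y'}=\Phi_Y+\Phi_{Y'}$ coming from $\mathrm{Hilb}^n(Y\sqcup Y')=\coprod_{a+b=n}\mathrm{Hilb}^a(Y)\times\mathrm{Hilb}^b(Y')$, this says precisely that $\Phi$ respects the relations defining Levine--Pandharipande cobordism.

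Next I would check that $\Psi$ obeys the same relations. Since $K,L$ are line bundles, $\Lambda_{-u^r}K=\mathcal{O}-u^rK$ in $K$-theory, so each coefficient $\chi(\Lambda_{-u^r}K,\Lambda_{-v^r}L)$ is a $\mathbb{Z}[u,v]$-linear combination of ordinary sheaf Euler characteristics $\chi(Y,K^{-a}\otimes L^{b})$. These are constant in proper flat families, and on the special fibre the normalization sequence $0\to\mathcal{O}_{Y_1\cup Y_2}\to\mathcal{O}_{Y_1}\oplus\mathcal{O}_{Y_2}\to\mathcal{O}_D\to 0$ gives $\chi_{X_\xi}=\chi_{Y_1}+\chi_{Y_2}-\chi_D$. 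Because $\mathbb{P}_D\to D$ is a $\mathbb{P}^1$-bundle with $R\pi_*\mathcal{O}_{\mathbb{P}_D}=\mathcal{O}_D$, and the restrictions to $\mathbb{P}_D$ of the relevant line bundles are pulled back from $D$, the projection formula yields $\chi_{\mathbb{P}_D}=\chi_D$. Hence $\Psi_{X_\xi}=\Psi_{Y_1}+\Psi_{Y_2}-\Psi_{\mathbb{P}_D}$, and therefore $\Delta$ also respects the cobordism relations.

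The third step is to pass to cobordism. Because $\Delta$ is additive under disjoint union and satisfies the double point relation, it factors through the cobordism ring $\Omega_*$ of smooth projective varieties carrying two line bundles; since $\mathrm{char}(\Bbbk)=0$, the theorems of Levine--Morel and Levine--Pandharipande identify $\Omega_*\otimes\mathbb{Q}$ with a polynomial ring whose generators may be taken to be products of projective spaces equipped with line bundles $\mathcal{O}(a_i)$. It therefore suffices to verify (\ref{eq-conj-1}) on each $\big(\prod_i\mathbb{P}^{n_i},\mathcal{O}(\vec a),\mathcal{O}(\vec b)\big)$. The two-dimensional generators are settled by the surface case \cite{Sca09,WZ14,Kru18}, so $\Delta=0$ there; in higher dimension I would compute both sides by localizing with respect to the standard torus action, whose fixed points on $\mathrm{Hilb}^n$ are the monomial ideals, and resum the resulting combinatorial series into the exponential on the right of (\ref{eq-conj-1}).

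I expect the main obstacle to be precisely this generator computation in dimension $\ge 3$: there $\mathrm{Hilb}^n(X)$ is singular and reducible, so $\chi(\Lambda_{-u}K^{[n]},\Lambda_{-v}L^{[n]})$ is inaccessible by Hirzebruch--Riemann--Roch on a smooth ambient space, and one must instead track the equivariant $K$-theory class of the tautological complex over the monomial-ideal fixed loci and prove that its generating function collapses to the predicted exponential form. A smaller but necessary technical point is the claim used in the second step that $K|_{\mathbb{P}_D}$ and $L|_{\mathbb{P}_D}$ are pullbacks from $D$; justifying this requires matching the line-bundle data to the expanded degeneration of Li--Wu \cite{LW15} underlying the proof of Theorem \ref{thm-intro-inc-exc-principle}.
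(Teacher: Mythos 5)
There is a genuine gap, and it is exactly where you yourself flag the ``main obstacle'': the statement you were asked to prove is stated in the paper as Conjecture~\ref{conj-1}, and the paper does \emph{not} prove it. Your first two steps --- showing that $\Phi$ satisfies the double point relation via Theorem~\ref{thm-intro-inc-exc-principle}, that $\Psi$ satisfies it via constancy of $\chi$ in flat proper families, the normalization sequence $0\to\mathcal{O}_{Y_1\cup Y_2}\to\mathcal{O}_{Y_1}\oplus\mathcal{O}_{Y_2}\to\mathcal{O}_D\to 0$, and the projection formula on the $\mathbb{P}^1$-bundle $\mathbb{P}_D\to D$ (note that $E|_{\mathbb{P}_D}$ being pulled back from $D$ is built into the statement of Theorem~\ref{thm-inc-exc-principle}, so your ``smaller technical point'' is resolved by definition) --- reproduce, in substance, the paper's proof of Proposition~\ref{prop-reduction-to-toric}: both sides of (\ref{eq-conj-1}) define homomorphisms $\omega_{n,1,1}\to 1+Q\mathbb{Z}[[Q]]$, and since $\omega_{n,1,1}\otimes\mathbb{Q}$ is generated by the triples (\ref{eq-cobordism-representative}) of \cite{LeeP12} and $1+Q\mathbb{Z}[[Q]]$ is torsion-free, equality need only be checked on those generators. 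But your third step, ``localize at the monomial-ideal fixed points and resum the resulting combinatorial series into the exponential,'' is not an argument; it is a restatement of the open problem. For $d\geq 3$ the Hilbert schemes of the generators $\mathbb{P}^\lambda$ are singular and reducible, and no one has carried out this resummation in closed form: the paper's \cite{HuX20} performs precisely such equivariant computations for smooth proper toric varieties, but only through $n\leq 6$, which is why the paper obtains only Corollary~\ref{cor-3fold-7points} (the conjecture modulo $Q^7$ for threefolds) rather than the conjecture itself.

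Two secondary points in your reduction also need repair if it is to match the generality claimed. First, the Levine--Morel and Levine--Pandharipande presentations of cobordism are theorems about (quasi-)projective varieties; to cover smooth proper algebraic spaces $X$ the paper cannot simply ``factor through $\Omega_*$'' as you do, and instead runs a separate d\'evissage: it degenerates $X$ to the blow-up $X_F$ along a smooth center together with projective-bundle pieces (the construction from the proof of \cite[lemma 5.1]{LevP09}), and invokes Moishezon's theorem (Theorem~\ref{thm-Moishezon-algsp}, proved in the appendix) to reach the projective case after finitely many smooth blow-ups. Second, your appeal to the surface case for the ``two-dimensional generators'' is fine but does nothing for the inductive structure, since the generators of $\omega_{n,1,1}\otimes\mathbb{Q}$ in dimension $n\geq 3$ are not built from surfaces. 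In short: your proposal is a correct reconstruction of the paper's reduction theorem (Proposition~\ref{prop-intro-reduction-to-toric}), but as a proof of Conjecture~\ref{conj-1} it fails, because the verification on the generating triples --- the only step that goes beyond what the paper already proves --- is missing and is, at present, open.
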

We observe that the terms
\[
\chi(\Lambda_{-u^r}K, \Lambda_{-v^r}L)
\]
in the parentheses on the right side of (\ref{eq-conj-1}) is a cobordism invariant for the triples $(X,K,L)$. So one expects naturally the logarithmic of the left handside of (\ref{eq-conj-1}) is also a cobordism invariant. This motivates the statement of Theorem \ref{thm-intro-inc-exc-principle}. As an application of Theorem \ref{thm-intro-universalSeries}, we have:
\begin{corollary}\label{cor-intro-reduction-to-toric}
Let $\Bbbk$ be a field of characteristic zero.
Assume that Conjecture \ref{conj-1} holds for $d$-dimensional products of projective spaces  and line bundles $K$ and $L$ of the form (\ref{eq-test-triples}), then it holds for all equidimensional smooth proper algebraic spaces $X$ of dimension $d$ over $\Bbbk$ and line bundles $K,L$ on $X$.
\end{corollary}
As we observed after Theorem \ref{thm-intro-universalSeries}, it suffices to assume that Conjecture \ref{conj-1} holds for a slightly smaller class $\mathcal{P}_{d,1,1}$. In \cite{Hu21}, we further reduced Conjecture \ref{conj-1} to a conjectural identity on the equivariant Hilbert functions of the torus fixed points of $\mathrm{Hilb}^n(\mathbb{A}^d)$ (see \cite[Conjecture 5.5 and Proposition 5.7]{Hu21}), and then in \cite[Proposition 5.8]{Hu21} we  verified Conjecture \ref{conj-1} for equivariant line bundles on smooth proper toric 3-folds and  $n\leq 6$. So we obtain:
\begin{corollary}\label{cor-intro-3fold-7points}
For 3-dimensional smooth proper algebraic spaces $X$ over $\Bbbk$ of characteristic zero and line bundles $K,L$ on $X$, Conjecture \ref{conj-1} holds modulo $Q^7$.
\end{corollary}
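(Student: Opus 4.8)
The plan is to deduce this from the verification of Conjecture \ref{conj-1} for toric varieties in \cite{HuX20} via the cobordism reduction of Proposition \ref{prop-intro-reduction-to-toric}, being careful that both ingredients survive truncation modulo $Q^7$. The first step is to reformulate Conjecture \ref{conj-1} as the vanishing of a single power series. Taking logarithms of both sides of (\ref{eq-conj-1}), the conjecture asserts that
\[
\log\Big(1+\sum_{n=1}^{\infty}\chi(\Lambda_{-u}K^{[n]},\Lambda_{-v}L^{[n]})Q^n\Big)
-\sum_{r=1}^{\infty}\chi(\Lambda_{-u^r}K,\Lambda_{-v^r}L)\frac{Q^r}{r}=0.
\]
By the discussion preceding Proposition \ref{prop-intro-reduction-to-toric} the second summand is manifestly a cobordism invariant of $(X,K,L)$ in each $Q$-degree, while Theorem \ref{thm-intro-inc-exc-principle} shows the same for the first. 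I would emphasize that this holds coefficient by coefficient: Theorem \ref{thm-intro-inc-exc-principle} is an identity of power series in $Q$, so its reduction modulo $Q^7$ is again valid, and hence the coefficient of each $Q^n$ with $1\le n\le 6$ in the displayed difference is individually a cobordism invariant.

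Next I would specialize the cobordism reduction to dimension three. For $X$ of dimension $d$, the coefficient of $Q^n$ in the displayed series is a cobordism invariant of $d$-dimensional triples, and the $\mathbb{Q}$-vector space of such classes is spanned by products of projective spaces of total dimension $d$ carrying equivariant line bundles; this is precisely the content underlying Proposition \ref{prop-intro-reduction-to-toric}. For $d=3$ there are only the three shapes $\mathbb{P}^3$, $\mathbb{P}^2\times\mathbb{P}^1$ and $\mathbb{P}^1\times\mathbb{P}^1\times\mathbb{P}^1$, each of which is a smooth proper toric variety. Inspecting the proof of Proposition \ref{prop-intro-reduction-to-toric}, which expresses the logarithmic series for an arbitrary $X$ through those of the generators using the double-point relation built into Theorem \ref{thm-intro-inc-exc-principle}, one sees that the argument proceeds within each fixed power of $Q$; it therefore yields the truncated statement that if Conjecture \ref{conj-1} holds modulo $Q^7$ for these three products with arbitrary equivariant $K,L$, then it holds modulo $Q^7$ for every smooth proper three-dimensional algebraic space over $\Bbbk$.

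Finally I would invoke \cite{HuX20}, in which Conjecture \ref{conj-1} is established for all smooth proper toric varieties and equivariant line bundles in the range $n\le 6$, that is, modulo $Q^7$. In particular it holds for the three products of projective spaces above, and combined with the truncated reduction of the previous paragraph this gives the conjecture modulo $Q^7$ for all smooth proper three-folds, as desired.

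The step I expect to require the most care is the compatibility of Proposition \ref{prop-intro-reduction-to-toric} with $Q$-adic truncation: I must confirm that the cobordism relations used in its proof never mix distinct powers of $Q$, so that knowing the conjecture only modulo $Q^7$ on the generators already forces it modulo $Q^7$ on arbitrary $X$. This is guaranteed because both the inclusion-exclusion identity of Theorem \ref{thm-intro-inc-exc-principle} and the target series on the right of (\ref{eq-conj-1}) are equalities of power series in $Q$ whose double-point relations act separately in each degree; once this is checked, the remainder of the argument is a direct combination of \cite{HuX20} and Proposition \ref{prop-intro-reduction-to-toric}.
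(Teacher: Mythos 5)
Your proposal is correct and takes essentially the same route as the paper, whose proof simply combines the $n\le 6$ toric verification of \cite{HuX20} with the cobordism reduction of Proposition \ref{prop-intro-reduction-to-toric}. The only difference is that the paper states this in two lines and leaves implicit the degree-by-degree truncation compatibility that you verify explicitly; your check is the right one, since the multiplicative group $1+Q\,\mathbb{Z}[Q]/(Q^7)$ is still torsion-free, so the homomorphism argument in the proposition's proof survives reduction modulo $Q^7$.
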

For vector bundles of ranks greater than one, not much is known for the precise universal polynomials (but  \cite{Kru18}). See also Remark \ref{rmk-higher-ranks}.\\

Theorem \ref{thm-intro-universalSeries} is essentially equivalent to  Theorem \ref{thm-intro-inc-exc-principle} if in the statement of Theorem \ref{thm-intro-inc-exc-principle} only open subset $C$ of $\mathbb{P}^1$ are allowed to be the base curve. But up to the present the algebraic cobordism theory need the representatives to be projective schemes; in fact, although the set-up of \cite{LevP09} can be directly generalized to all smooth proper algebraic spaces, some key results of \cite{LevM07} are still not bypassed, for which the projectivity assumption seems not easy to drop. Therefore in the proof of Theorem \ref{thm-intro-universalSeries}, we  use Moishezon's theorem for algebraic spaces to reduce the problem to projective schemes. For the lack of a reference, we provide a proof in Appendix \ref{sec:Moishezon-thm}, following basically the argument of \cite[Theorem 2.2.16]{MM07}. The result herein is well-known to experts. Nevertheless the notion of C-projective morphisms (Definition \ref{def-CProj}) might be useful in other circumstances, even for schemes. \\

This paper is organized as follows. In Section \ref{sec:goodDegeneration} we recall the theory of good degenerations $\mathcal{I}_{\mathfrak{X}/C}^n$  of Hilbert schemes of points, and generalize the definition and the properness to algebraic spaces. A reader only interested in the projective case can skip Section \ref{sec:properness-algeSpace}, while Proposition \ref{prop-semisstablereduction-zero-dim-subschemes} may have some independent interests even in the projective case. In Section \ref{sec:flatness} we show the flatness of  $\mathcal{I}_{\mathfrak{X}/C}^n$. 
In Section \ref{sec:decompositionCentralFiber} we study the decomposition of the central fiber of  good degenerations. In Section \ref{sec:baseChange} we generalize the local constant property of sheaves flat over a base, to tame Deligne-Mumford stacks.
In Section \ref{sec:alg-cobordism} we recall the theory of algebraic cobordism of bundles,  state its generalization to a list of bundles, and give sketch  proof. In Section \ref{sec:tautological-sheaves-degeneration-formula}, from the results of the previous sections we prove the theorems stated in this introduction. In Appendix \ref{sec:Moishezon-thm}, as mentioned above, we show Moishezon's theorem for algebraic spaces. In Appendix \ref{sec:identity-chern-doublepoint} we show an identity involving Chern classes of the fibers of a double point degeneration; this identity can be regarded as a cycle version of \cite[Proposition 5]{LeeP12}, and is used in Section \ref{sec:alg-cobordism} to show that integrations of Chern polynomials factor through the double point relations.\\

To conclude the introduction, we make some comments on the characteristic zero assumption of the base field $\Bbbk$. It appears in two places. One appearance is in the use of algebraic cobordism. The main theorems (\cite[chapter 1]{LevM07}) of algebraic cobordism need $\mathrm{char}(\Bbbk)=0$, due to certain Bertini arguments, and the use of  resolution of singularities and  weak factorizations. To drop the assumption  in this appearance is of course difficult but I expect that finally one can make it. Another appearance is our use of the stack $\mathcal{I}_{\mathfrak{X}/C}^n$. For any given positive characteristic $p$, $\mathcal{I}_{\mathfrak{X}/C}^n$ is not tame  when $n$ is sufficiently large, 
thus the Euler characteristic of a tautological sheaf on $\mathcal{I}_{\mathfrak{X}/C}^n$  might not be well defined.
This issue seems more serious, and I do not know how to solve it. \\

\emph{Acknowledgement}:
I am grateful to Jian Zhou for sharing his insights on Conjecture \ref{conj-1}.
I thank  Huazhong Ke, Y.-P. Lee, Rahul Pandharipande, Feng Qu, Lei Song, Zhilan Wang, and especially Yu-Jong Tzeng for very helpful discussions in various aspects. I am also grateful to an anonymous referee for pointing out  inaccuracies and typos in a previous version of this paper.
This work is supported by Science and Technology Projects 202201010793 in Guangzhou, NSFC 11701579, and NSFC 12371063.

\section{Good degeneration of Hilbert schemes of points revisited}\label{sec:goodDegeneration}

The good degeneration of Hilbert scheme of points, and more generally of the Quot schemes, are constructed in \cite{LW15}. 
In this section we give a brief account of the GIT construction of \cite{GHH19} of the good degenerations of Hilbert schemes of points. In this section, our main concern is the properness of the good degeneration. We interpret the properness  as a kind of semistable reduction, which does not involve expanded degenerations. This enables us,  for later use, to generalize the theory to algebraic spaces. 

We fix a base field $\Bbbk$. 

\subsection{Good degeneration of Hilbert schemes of points on smooth projective schemes}
\begin{definition}\label{def-simpleDeg-chain}
Let $C$ be a  smooth curve over $\Bbbk$  with a distinguished $\Bbbk$-point $0\in C$. 
A \emph{simple  degeneration of chain type of length $l$} is a  proper and surjective morphism $\pi:\mathfrak{X}\rightarrow C$ such that 
\begin{enumerate}
	\item[(i)] $\mathfrak{X}$ is a smooth algebraic space over $\Bbbk$;
	\item[(ii)] the restriction of $\pi$ to $\pi^{-1}(C\setminus\{0\})$  is smooth;
	\item[(iii)] $\pi^{-1}(0)=Y_1\cup\dots\cup Y_{l}$ is a  divisor of $\mathfrak{X}$ with $l$ smooth irreducible components $Y_i$ for $1\leq i\leq l$, such that $Y_j$ intersects transversally with $Y_{j+1}$ at a smooth divisor $D_j$ of $Y_j$ and $Y_{j+1}$, and $Y_j\cap Y_k=\emptyset$ if $|j-k|\geq 2$, for $1\leq j,k\leq l$.
\end{enumerate}  
\end{definition}
The following definition stems from \cite[Def. 1.16]{GHH19}.
\begin{definition}\label{def-simpleDeg-bipartite}
Let $C$ be a  smooth curve over $\Bbbk$  with a distinguished $\Bbbk$-point $0\in C$. 
A \emph{bipartite simple  degeneration} is a  proper and surjective morphism $\pi:\mathfrak{X}\rightarrow C$ such that 
\begin{enumerate}
	\item[(i)] $\mathfrak{X}$ is a smooth algebraic space over $\Bbbk$;
	\item[(ii)] the restriction of $\pi$ to $\pi^{-1}(C\setminus\{0\})$  is smooth;
	\item[(iii)] $\pi^{-1}(0)$ is a divisor of $\mathfrak{X}$, and there are two sub-divisors labelled as $Y_1$ and $Y_2$, such that $\pi^{-1}(0)=Y_1\cup Y_2$, and $Y_1$ and $Y_2$ are smooth divisors of $\mathfrak{X}$, and $Y_1$ intersects transversally with $Y_{2}$ at a smooth divisor $D$ of $Y_1$ and $Y_{2}$. It is allowed that $D=\emptyset$. 
\end{enumerate}  
\end{definition}

If we ignore the choice of the labelling $Y_1$ and $Y_2$, a bipartite simple  degeneration is no other than  a \emph{double point degeneration} in the sense of \cite{LevP09}. Note that we do not require $Y_1$, $Y_2$, or $D$ to be irreducible, so a simple degeneration of chain type can always be made into a bipartite simple degeneration, by grouping the irreducible components $Y_i$'s into two smooth divisors renamed by $Y'_1$ and $Y'_2$. In particular, if $l=1$,  we can take $Y_2=D=\emptyset$.


As usual we have required that a degeneration $\pi$ is proper. To facilitate some intermediate constructions we introduce the following definition.
\begin{definition}
We call a surjective morphism $\pi:\mathfrak{X}\rightarrow C$  satisfying (i)-(iii) of Definition \ref{def-simpleDeg-chain} a \emph{local} simple   degeneration of chain type (of length $l$). Similarly, dropping the properness requirement of Definition \ref{def-simpleDeg-bipartite}, we have the notion of \emph{local bipartite simple degeneration}.
\end{definition}

Let  $\pi:\mathfrak{X}\rightarrow C$ be a local simple degeneration of chain type. 
Since we concern only the fibers over a neighborhood of $0\in C$, we can assume that there is given  an étale morphism $\tau:C\rightarrow \mathbb{A}^1$ such that $\tau^{-1}(0)=0$. 
Since the closed points of $D$ whose residue fields are separable over $\Bbbk$ are dense in $D$, from the Jacobian criterion we know that étale locally near a point $w$ of $D$, we have a local model of $\mathfrak{X}$ of the form $\Spec\ \Bbbk[x,y,z,\dots,t]/(xy-t)$. More precisely, there is a commutative diagram
\begin{equation}\label{graph-local-model-1}
\xymatrix{
\Spec\ \Bbbk[x,y,z,\dots,t]/(xy-t) \ar[d]_{\pi'}  & \mathfrak{W} \ar[r] \ar[l]_>>>>>>{f} & \mathfrak{X} \ar[d]^{\pi} \\
	\Spec\ \Bbbk[t]  && C  \ar[ll]_>>>>>>>>>>>>>>>>>>>{\tau}
}	
\end{equation}
where $\mathfrak{W}$ is a scheme smooth over $\Bbbk$ and $\mathfrak{W}\rightarrow \mathfrak{X}$ is étale, and $\pi'$ is the projection to the last coordinate, and $f$ is étale. Then it will turn out that in most of the following constructions on $\mathfrak{X}$ one can essentially deal with only the case that $C$ is an open subset of $\mathbb{A}^1$ containing $0$.

Let $G[n]\subset \mathrm{SL}_{n+1}$ be the diagonal maximal torus, and let $G[n]$ act on $\mathbb{A}^{n+1}$ as 
\begin{equation}\label{eq-action-1}
	(\sigma_1,\dots,\sigma_{n+1}).(t_1,\dots,t_{n+1})=(\sigma_1 t_1,\dots,\sigma_{n+1}t_{n+1}),
\end{equation}
and act on $\mathbb{A}^1$ trivially, so that the multiplication morphism $\mu:\mathbb{A}^{n+1}\rightarrow \mathbb{A}^1$ is equivariant. Define $C[n]=C\times_{\mathbb{A}^1}A^{n+1}$, which is induced by the morphisms $t$ and $\mu$.  We have a canonical projection $C[n]\rightarrow C$, and induced actions of $G[n]$ on $C[n]$.

\begin{definition}[Expanded degeneration]
Let $\mathfrak{X}\rightarrow \mathbb{A}^1$ be a local bipartite simple degeneration. The \emph{expanded degeneration} $\mathfrak{X}[n]$ is a small resolution of $\mathfrak{X}\times_{\mathbb{A}^1}\mathbb{A}^{n+1}$ defined  recursively as follows.
\begin{enumerate}
	\item $\mathfrak{X}[0]=\mathfrak{X}$;
	\item Suppose that the resolution $\rho_{n-1}:\mathfrak{X}[n-1]\rightarrow \mathfrak{X}\times_{\mathbb{A}^1}\mathbb{A}^{n}$ is constructed. Composing $\rho_{n-1}$ with the projection to the last component of $\mathbb{A}^n$, we regard $\mathfrak{X}[n-1]$ as a scheme over $\mathbb{A}^1$ and form the fiber product $\mathfrak{X}[n-1]\times_{\mathbb{A}^1}\mathbb{A}^2$, where the morphism
	 $m:\mathbb{A}^2\rightarrow \mathbb{A}^1$ is the multiplication morphism.  Then	$\mathfrak{X}[n]$ is defined as the blow-up of the fiber product
	\[
	\xymatrix@C=4pc{
		\mathfrak{X}[n-1]\times_{\mathbb{A}^1}\mathbb{A}^2 \ar[r]^{(\rho_{n-1},m)} \ar[d]_{\mathrm{pr}_1} & \mathfrak{X}\times_{\mathbb{A}^1}\mathbb{A}^{n+1} \ar[d] & (x,t_1,\dots,t_{n+1}) \ar@{|->}[d] \\
		\mathfrak{X}[n-1] \ar[r]^{\rho_{n-1}} & \mathfrak{X}\times_{\mathbb{A}^1}\mathbb{A}^n & (x,t_1,\dots,t_{n-1},t_n t_{n+1})
	}
	\]
	along the strict transform of $Y_1\times V(t_{n+1})\subset \mathfrak{X}\times_{\mathbb{A}^1}\mathbb{A}^{n+1}$ under the partial resolution $(\rho_{n-1},m)$.
\end{enumerate}
\end{definition}

Let $\pi:\mathfrak{X}\rightarrow C$ be a local bipartite simple degeneration. The associated expanded degeneration $\mathfrak{X}[n]$ is a small $G[n]$-equivariant degeneration of $\mathfrak{X}\times_C C[n]=\mathfrak{X}\times_{\mathbb{A}^1}\mathbb{A}^{n+1}$. We denote the induced projection $\mathfrak{X}[n]\rightarrow C[n]$  by $\pi[n]$.

\begin{example}
Consider the local model $\mathfrak{X}=\Spec\ \Bbbk[x,y,z,\dots,t]/(xy-t)$. The fiber product $\mathfrak{X}\times_{\mathbb{A}^1}\mathbb{A}^2$ is $\Spec\ \Bbbk[x,y,z,\dots,t_1,t_2]/(xy-t_1 t_2)$. It is a cone over a quadric in $\mathbb{P}^2$. Blowing up the origin of the cone we  obtain a resolution, say $\mathfrak{X}'$. But this resolution is not the one we want; in the projection $\mathfrak{X}'\rightarrow \mathbb{A}^2\xrightarrow{m}\mathbb{A}^1$, the fibers $\mathfrak{X}'_t$ are not isomorphic to $\mathfrak{X}_t$  for $0\neq t\in \mathbb{A}^1$. So we take a small resolution, the blow-up along $\{y=t_2=0\}$. Now the local model of $\mathfrak{X}[1]$ is the closed subscheme of 
\begin{equation*}
	\mathfrak{X}\times \mathbb{A}^{2}\times \mathbb{P}^1=\Spec\ \Bbbk[x,y,z,\dots,t_1,t_2]\times \mathrm{Proj}\ \Bbbk[u_1,v_1]
\end{equation*}
defined by
\[
\begin{cases}
u_1 x=v_1 t_1,\\
v_1 y=u_1 t_2,\\
xy=t_1 t_2.
\end{cases}
\]
In fact the equation $xy=t_1 t_2$ is redundant. Replace $t_2$ by $t_2 t_3$, and blow-up the ideal $(y,t_3)$, we obtain the local model of $\mathfrak{X}[2]$ as the closed subscheme of
\begin{equation*}
	\mathfrak{X}\times \mathbb{A}^{3}\times (\mathbb{P}^1)^2=\Spec\ \Bbbk[x,y,z,\dots,t_1,t_2,t_3]\times \prod_{i=1}^{2} \mathrm{Proj}\ \Bbbk[u_i,v_i]
\end{equation*} 
defined by
\[
\begin{cases}
u_1 x=v_1 t_1,\\
u_2 v_1=v_2 u_1 t_2,\\
v_2 y=u_2 t_3.
\end{cases}
\]
\end{example}
Repeating this process one obtains the following local description of $\mathfrak{X}[n]$ (\cite[Proposition 1.7]{GHH19}, see also \cite[Lemma 1.2]{LiJ01}).

\begin{proposition}\label{prop-doublepoint-degeneration-local-model}
Let $\mathfrak{X}=\Spec\ \Bbbk[x,y,z,\dots]$, and $\pi$ be the local bipartite simple degeneration $\mathfrak{X}\rightarrow C=\mathbb{A}^1=\Spec\ \Bbbk[t]$ induced by $t\mapsto xy$. Consider the product 
\begin{equation}\label{eq-local-chart}
	\mathfrak{X}\times \mathbb{A}^{n+1}\times (\mathbb{P}^1)^n=\Spec\ \Bbbk[x,y,z,\dots,t_1,\dots,t_{n+1}]\times \prod_{i=1}^n\mathrm{Proj}\ \Bbbk[u_i,v_i].
\end{equation}
\begin{enumerate}
	\item[(i)] $\mathfrak{X}[n]$ is the closed subscheme of $\mathfrak{X}\times\mathbb{A}^{n+1}\times (\mathbb{P}^1)^n$ defined by 
	\begin{eqnarray}\label{eq-local-equations}
	\begin{cases}
	u_1 x=v_1 t_1,\\
	u_i v_{i-1}=v_i u_{i-1} t_i,\ 1<i\leq n,\\
	v_n y=u_n t_{n+1}.
	\end{cases}
	\end{eqnarray}
	\item[(ii)] The $G[n]$-action on $\mathfrak{X}[n]$ is the restriction of the action which is trivial on $\mathfrak{X}$, given by (\ref{eq-action-1}) on $\mathbb{A}^{n+1}$, and given by
	\[
	(\sigma_1,\dots,\sigma_{n+1}).[u_i,v_i]=[\sigma_1 \sigma_2\cdots \sigma_i u_i:v_i]
	\]
	on the $i$-th copy of $\mathbb{P}^1$.
\end{enumerate}
\end{proposition}\pqed

\begin{definition}
For 
$\mathbf{a}=(a_0,a_1,\dots,a_{r},a_{r+1})\in \mathbb{Z}^{r+2}$
satisfying
\[
1=a_0\leq a_1<\dots<a_i<\dots<a_r\leq a_{r+1}=n+1,
\]
we define
\[
I_{\mathbf{a}}=\{a_1,\dots,a_r\}\subset [n+1]=\{1,\dots,n+1\},
\]
and
\[
\mathbf{v}_{\mathbf{a}}=(a_1-a_0,\dots,a_{r+1}-a_r)\in \mathbb{Z}^{r+1}.
\]
\end{definition}\label{prop-fiber-over-stratum}
The following is a description of a general fiber over a stratum of $C[n]$, see \cite[Lemma 2.2]{LW15} or \cite[Proposition 1.12]{GHH19}.
\begin{proposition}
A general fiber over the locus $\{t_{a_1}=\dots=t_{a_r}=0\}\subset C[n]$ has a dual graph as
\[
\bullet\xrightarrow{a_1}\circ \xrightarrow{a_2}\circ\rightarrow\cdots\xrightarrow{a_r}\bullet,
\]
with components $\Delta_I^{a_0},\dots,\Delta_I^{a_r}$, such that
\begin{enumerate}
	\item[(i)] $D\cong \Delta_I^{a_i}\cap \Delta_{I}^{a_{i+1}}$ for $0\leq i\leq r-1$, and  any triple intersection of $\Delta_I^{a_0},\dots,\Delta_I^{a_r}$ is empty;
 	\item[(ii)] $\Delta_I^{a_0}\cong Y_1$, $\Delta_{I}^{a_r}\cong Y_2$;
 	\item[(iii)] For $1\leq i\leq r-1$, $\Delta_I^{a_i}$ is isomorphic to the projective bundle $\mathbb{P}(N_{D/Y_1}\oplus \mathcal{O}_D)$, where the base $D$ is located in the bundle either as $D\cong \Delta_I^{a_{i-1}}\cap \Delta_I^{a_i}$ or $D\cong \Delta_I^{a_{i}}\cap \Delta_I^{a_{i+1}}$.
 \end{enumerate}  
\end{proposition}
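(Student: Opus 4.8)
The plan is to read the fiber directly off the explicit local model of Proposition \ref{prop-simple-degeneration-local-model}, since the statement and the objects $Y_1,Y_2,D$ are étale-local over $X$ and the formation of $X[n]$ commutes with étale base change. So I would work in the chart $X=\Spec\ \Bbbk[x,y,z,\dots]$ with $\pi$ given by $t=xy$, where $X[n]\subset X\times\mathbb{A}^{n+1}\times(\mathbb{P}^1)^n$ is cut out by the equations (\ref{eq-local-equations}). Fix a general $\Bbbk$-point $p$ of the stratum $\{t_{a_1}=\dots=t_{a_r}=0\}$, so that $t_{a_k}=0$ for $1\le k\le r$ while $t_j=\tau_j\in\Bbbk^{\times}$ for $j\notin I_{\mathbf a}$. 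Substituting these values into (\ref{eq-local-equations}) and using that $xy=t_1\cdots t_{n+1}=0$ is a consequence of (\ref{eq-local-equations}) gives the defining equations of the scheme-theoretic fiber $\pi[n]^{-1}(p)$.

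Next I would analyze this fiber as a broken chain. View the string $x\text{-end},\ \mathbb{P}^1_1,\dots,\mathbb{P}^1_n,\ y\text{-end}$ joined by the $n+1$ relations of (\ref{eq-local-equations}), the $i$-th relation linking two consecutive factors. When $t_i\ne 0$ the $i$-th relation determines $[u_i:v_i]$ from its neighbour (an isomorphism on the relevant chart); when $t_i=0$ it degenerates to $u_{a_k}v_{a_k-1}=0$ (or $u_1 x=0$, resp.\ $v_n y=0$ at the ends), which breaks the chain. Hence the $r$ vanishings at $a_1<\dots<a_r$ split the chain into $r+1$ connected blocks, whose numbers of $\mathbb{P}^1$-factors are exactly the entries $a_{i+1}-a_i$ of $\mathbf v_{\mathbf a}$. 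On the leftmost block, containing the $x$-end, every $[u_i:v_i]$ is determined by $x$ and the block is cut out by $x=0$: this is $\Delta_I^{a_0}\cong Y_1$. Symmetrically the rightmost block is $\{y=0\}\cong Y_2=\Delta_I^{a_r}$. On a middle block between broken bonds $a_i$ and $a_{i+1}$, the surviving relations glue $\mathbb{P}^1_{a_i},\dots,\mathbb{P}^1_{a_{i+1}-1}$ into a single parameter $\lambda_i\in\mathbb{P}^1$, subject only to the two boundary conditions $u_{a_i}x=0$ and $v_{a_i}y=0$ propagated from the broken bonds; the generic point of the block therefore forces $x=y=0$, so the block lies over $D$ and carries a free $\mathbb{P}^1$. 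This is the bubble $\Delta_I^{a_i}$ for $1\le i\le r-1$.

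I would then record the incidences: two consecutive blocks share precisely the locus $x=y=0$ with the intermediate parameter taking the distinguished value forced by the broken bond (namely $[1:0]$ or $[0:1]$), a copy of $D$, giving $D\cong\Delta_I^{a_i}\cap\Delta_I^{a_{i+1}}$ for $0\le i\le r-1$; non-consecutive blocks impose incompatible values of some bond coordinate and are disjoint, so all triple intersections are empty. This produces the dual graph $\bullet\to\circ\to\cdots\to\bullet$ and parts (i) and (ii). To finish I must upgrade this set-theoretic and local picture to a scheme-theoretic and global one: that the fiber is reduced with no embedded components follows from flatness of $\pi[n]$ together with the complete-intersection form of (\ref{eq-local-equations}). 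The main obstacle is part (iii), identifying each middle block globally as the projective bundle $\mathbb{P}(N_{D/Y_1}\oplus\mathcal{O}_D)$ rather than the naive product $D\times\mathbb{P}^1$. For this I would track how the glued parameter $\lambda_i$ transforms across the étale charts: the broken-bond relations express $\lambda_i$ through the normal coordinate $y$ (respectively $x$) to $D$, so its transition functions are exactly those of $\mathbb{P}(N_{D/Y_1}\oplus\mathcal{O}_D)$, with the two sections $\lambda_i=0,\infty$ realized as the two copies of $D$ along which the bubble meets its neighbours. The $G[n]$-action of Proposition \ref{prop-simple-degeneration-local-model}(ii) makes this gluing equivariant and pins down the normal bundle; granting this, the result agrees with \cite[lemma 2.2]{LW15} and \cite[proposition 1.12]{GHH19}.
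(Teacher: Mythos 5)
The paper gives no proof of this proposition at all: it is quoted directly from \cite[lemma 2.2]{LW15} and \cite[proposition 1.12]{GHH19}, so there is no internal argument to compare against. Your chain-breaking analysis of the local model of proposition \ref{prop-simple-degeneration-local-model} is essentially the argument of those references, and its combinatorial core is sound: at a non-broken bond ($t_i\neq 0$) the $i$-th relation identifies $[u_i:v_i]$ with $[u_{i-1}t_i:v_{i-1}]$, at a broken bond ($t_{a_k}=0$) it degenerates to $u_{a_k}v_{a_k-1}=0$, so the fiber splits into $r+1$ blocks, adjacent blocks meet along a copy of $D$, and non-adjacent blocks are disjoint because they pin the same intermediate $\mathbb{P}^1$ at $[0:1]$ and at $[1:0]$ respectively. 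However, your identification of the end components is self-contradictory as written. On the block containing the $x$-end, $x$ is a free coordinate which determines all the $[u_i:v_i]$ (you say so yourself), so that component cannot be ``cut out by $x=0$''; it is cut out by $y=0$ together with the pinning $[u_j:v_j]=[0:1]$ for $j\geq a_1$, and symmetrically the $y$-end block is cut out by $x=0$. One sees this already for $n=1$, $I=\{2\}$: the equations become $u_1x=v_1t_1$, $v_1y=0$, whose two components are $\{y=0\}$ (with $x$ free and $[u_1:v_1]=[t_1:x]$) and $\{x=0,\ v_1=0\}$ (with $y$ free); in fact $\{x=0\}$ always cuts out the union of \emph{all} blocks other than the $x$-end block, not a single component. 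Which end is called $Y_1$ is purely conventional (the paper itself wavers: the example blows up $\{y=t_2=0\}$, i.e.\ takes $Y_1=\{y=0\}$ locally, while construction \ref{cons-specialization-0dim-subscheme-1} declares $Y_1=\{x=0\}$), but the two clauses of your sentence are incompatible with one another, and the same slip garbles your statement of the boundary conditions on the middle blocks, which are $u_{a_i}v_{a_i-1}=0$ and $u_{a_{i+1}}v_{a_{i+1}-1}=0$, forcing $x=0$ and $y=0$ only after propagation through the intervening isomorphisms.

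The second, more substantive issue is that part (iii) --- which you correctly single out as the main obstacle --- is not actually proved in your proposal. Chart by chart every middle block is just $D\times\mathbb{P}^1$, and the entire content of (iii) is global: the transition functions of the glued parameter must be shown to be those of $\mathbb{P}(N_{D/Y_1}\oplus\mathcal{O}_D)$ rather than of some other $\mathbb{P}^1$-bundle over $D$. You gesture at this computation but then conclude by ``granting this'' and appealing to \cite{LW15} and \cite{GHH19} --- which is circular as a proof, since those are exactly the sources the statement itself cites. To close the gap you would need to take two overlapping charts of $X$ adapted to $D$, say $x'=\alpha x$, $y'=\alpha^{-1}y$ with $\alpha$ a unit (so that $xy=x'y'=t$ is preserved), trace through the blow-up equations how $[u_{a_i}:v_{a_i}]$ transforms, and verify that the resulting cocycle is that of $N_{D/Y_1}\oplus\mathcal{O}_D$, with the sections $[1:0]$ and $[0:1]$ giving the two copies of $D$ along which the block meets its neighbours. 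Given that the paper itself merely cites the literature here, deferring (iii) is defensible, but then your proposal should be presented as a verification of (i), (ii) and the dual graph, not as a proof of the full statement.
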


Now let $\pi:\mathfrak{X}\rightarrow C$ be a bipartite simple degeneration.
Let $\mathbf{H}^n(\mathfrak{X}/C)=\mathrm{Hilb}^n(\mathfrak{X}[n]/C[n])$, the Hilbert functor of $n$ points  associated with $\pi[n]$. By \cite[Proposition 1.9, Theorem 4.4]{Ryd11}, $\mathbf{H}^n(\mathfrak{X}/C)$ is represented by an algebraic space separated of finite type over $C[n]$. Then by \cite[Theorem 1.1]{OS03}, $\mathbf{H}^n(\mathfrak{X}/C)$ is an algebraic space proper over $C[n]$.

 We denote by $\mathbf{H}^n_{sm}(\mathfrak{X}/C)$ the open subspace of $\mathbf{H}^n(\mathfrak{X}/C)$, which parametrizes the length $n$ closed subspaces of the fibers of $\pi[n]$ that are supported on the smooth locus of the fibers.

 For any point $[Z]\in \mathbf{H}^n_{sm}(\mathfrak{X}/C)$, let 
 \[
 I_{[Z]}=\big\{i\in [n+1]:t_i|_{\mathrm{Supp}(Z)}=0\big\}.
  \]
  Suppose $\mathbf{a}\in \mathbb{Z}^{r+2}$ such that $I_{\mathbf{a}}=I_{[Z]}$. Then $[Z]$ can be written as 
 \[
 [Z]=\bigsqcup_{i=0}^{r}[Z_i]
 \]
 such that $Z_i$ lies in $\Delta_I^{a_i}$ (and not in  other $\Delta_I^{a_j}$ for $j\neq i$), for $0\leq i\leq r$. Define
 \[
 \mathbf{v}(Z)=\big(\mathrm{length}(Z_0),\dots,\mathrm{length}(Z_r)\big)\in \mathbb{Z}^{r+1}.
 \]
\begin{proposition}\label{thm-stable-locus}\emph{(\cite[Theorem 2.10]{GHH19})}
Let $\mathfrak{X}$ be a smooth quasi-projective scheme and $\pi:\mathfrak{X}\rightarrow C$ a bipartite simple  degeneration. Then there exists a $G[n]$-linearized relative ample line bundle $\mathcal{M}$ on $\mathbf{H}^n(\mathfrak{X}/C)$, such that a point $[Z]\in \mathbf{H}^n(\mathfrak{X}/C)$ is semistable if and only if the following two conditions are satisfied:
\begin{enumerate}
 	\item[(i)] $[Z]\in \mathbf{H}^n_{sm}(\mathfrak{X}/C)$;
 	\item[(ii)] $\mathbf{v}(Z)=\mathbf{v}_{\mathbf{a}}$ where $\mathbf{a}\in \mathbb{Z}^{r+2}$ is defined such that $I_{\mathbf{a}}=I_{[Z]}$.
 \end{enumerate}
Moreover, when this happens, $[Z]$ is a stable point.
\end{proposition}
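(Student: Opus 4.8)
The plan is to recognise this as a torus GIT problem and to run the Hilbert--Mumford numerical criterion. Since $G[n]$ is the diagonal maximal torus of $\mathrm{SL}(n+1)$, for any $G[n]$-linearised relatively ample $\mathcal{M}$ a point $[Z]$ is semistable (resp.\ stable) precisely when $\mu^{\mathcal{M}}([Z],\lambda)\geq 0$ (resp.\ $>0$) for every nontrivial one-parameter subgroup $\lambda$ of $G[n]$; the whole content is to choose $\mathcal{M}$ so that the sign of $\mu^{\mathcal{M}}([Z],\lambda)$ is dictated exactly by conditions (i) and (ii).

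First I would construct the linearisation. Fix a $\pi[n]$-relatively ample line bundle $L$ on $X[n]$ and, for $m\gg 0$, use the closed embedding of $\mathbf{H}^n(X/C)$ into the relative Grassmannian attached to $\pi[n]_*(L^{m})$ that sends $[Z]$ to the quotient $H^0(Z,L^{m}|_Z)$; let $\mathcal{M}$ be the pullback of the relative Plücker bundle, i.e.\ the determinant of the universal bundle $H^0(Z,L^{m}|_Z)$. The $G[n]$-action of Proposition~\ref{prop-simple-degeneration-local-model}(ii) linearises $L$, and hence $\mathcal{M}$, canonically.

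Next I would compute the weights. Given $[Z]$ lying over a stratum with $I_{[Z]}=I_{\mathbf{a}}$ and a cocharacter $\lambda=(\lambda_1,\dots,\lambda_{n+1})$ with $\sum_i\lambda_i=0$, properness of $\mathbf{H}^n(X/C)$ over $C[n]$ furnishes the limit $[Z_0]=\lim_{t\to 0}\lambda(t)\cdot[Z]$, and $\mu^{\mathcal{M}}([Z],\lambda)$ equals the negative of the $\lambda$-weight on the line $\mathcal{M}_{[Z_0]}$. By Proposition~\ref{prop-simple-degeneration-local-model}(ii) the bubble coordinate $u_i$ on the $i$-th $\mathbb{P}^1$ carries weight $s_i:=\lambda_1+\dots+\lambda_i$ (while $v_i$ carries weight $0$ and $t_j$ carries weight $\lambda_j$), so $\lambda(t)$ slides a point on the bubble $\Delta_I^{a_i}$ toward one of its two nodes according to the sign of $s_i$. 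Tracking the induced action on $H^0(Z,L^{m}|_Z)$, I expect $\mu^{\mathcal{M}}([Z],\lambda)$ to reduce to an explicit numerical expression in the $s_i$, the lengths $\mathrm{length}(Z_i)$, and the resulting redistribution of length across the nodes of the chain.

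The heart of the argument is then a sign analysis. If some $Z_i$ is supported on the singular locus $D=\Delta_I^{a_i}\cap\Delta_I^{a_{i+1}}$ (violating (i)), a $\lambda$ nonzero at that node bubbles the offending length into a new component and forces $\mu<0$; if the lengths are unbalanced, $\mathbf{v}(Z)\neq\mathbf{v}_{\mathbf{a}}$ (violating (ii)), a $\lambda$ concentrated on the over- or under-loaded components destabilises. Conversely, for a smooth-supported, perfectly balanced $[Z]$, every nontrivial $\lambda$ redistributes a strictly positive length across some node in the limit $[Z_0]$, and $\mathcal{M}$ is calibrated so that this carries strictly positive weight; hence $\mu>0$ and $[Z]$ is stable. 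I expect the genuine obstacle to be precisely this calibration: one must tune $L$ and $m$ so that the balanced locus is the unique stable locus and so that no nontrivial $\lambda$ yields weight zero on a balanced $[Z]$. This last rigidity is exactly the coincidence of semistability with stability asserted in the proposition, and it is where the combinatorics of the chain $\bullet\to\circ\to\cdots\to\bullet$ of components $\Delta_I^{a_0},\dots,\Delta_I^{a_r}$ is indispensable.
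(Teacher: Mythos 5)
First, a point of comparison: the paper itself contains no proof of this proposition --- it is quoted verbatim from \cite[theorem 2.10]{GHH19} --- so your attempt has to be measured against the argument in \cite{GHH19}. Your overall strategy is indeed theirs: embed $\mathbf{H}^n(X/C)$ into a relative Grassmannian via $\pi[n]_*(L^{m})$ for $L$ relatively ample on $X[n]$, linearize the resulting Plücker determinant using the action described in proposition \ref{prop-simple-degeneration-local-model}(ii), and decide (semi)stability by the Hilbert--Mumford criterion, reading the weights off the chain of components of the degenerate fibers. In that sense the frame of your proposal is the right one.

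As a proof, however, it has genuine gaps. First, the relative numerical criterion is misstated: $C[n]$ is not proper and $G[n]$ acts nontrivially on it, so for $[Z]$ lying over a point of $C[n]$ with some $t_i\neq 0$ and a one-parameter subgroup with $\lambda_i<0$, the limit $\lim_{t\to 0}\lambda(t)\cdot[Z]$ simply does not exist; properness of $\mathbf{H}^n(X/C)\rightarrow C[n]$ produces a limit upstairs only when the limit of the image point in $C[n]$ exists. The criterion must quantify only over such $\lambda$, and this restriction is not cosmetic --- it is what allows points over the open stratum, where no balancing condition is visible, to be semistable at all. Second, and more seriously, the entire quantitative content --- what you call the ``calibration'' --- is left as an expectation, yet it \emph{is} the theorem. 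The sign of $\mu^{\mathcal{M}}([Z],\lambda)$ depends on the choice of $L$, of $m$, and of the linearization itself (which may be twisted by a character of $G[n]$); without fixing these data and actually computing the weight at the limit point, one can neither ``force $\mu<0$'' for unbalanced or singularly supported $Z$, nor exclude strictly semistable points. The final assertion (semistable $\Rightarrow$ stable) amounts to saying that $0$ never lies on a wall of any weight polytope, which is false for generic linearizations and holds only for the tuned choice made in \cite{GHH19}, whose construction and explicit weight computation constitute the bulk of their GIT analysis. So your proposal reproduces the skeleton of the known proof, but omits precisely the steps in which the statement resides.
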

Thus in case that $\mathfrak{X}$ is projective,  for appropriate $G[n]$-linearized relative ample line bundles $\mathcal{M}$ as in Theorem \ref{thm-stable-locus}, the (semi)stable locus of $\mathbf{H}^n(\mathfrak{X}/C)$ is independent of the choice of such $\mathcal{M}$. This motivates the following definition for algebraic spaces.

\begin{definition}\label{def-good-degeneration-Hilbert-scheme-points}
Let $\mathfrak{X}$ be a smooth separated algebraic space and $\pi:\mathfrak{X}\rightarrow C$ a bipartite simple  degeneration. We say a point $[Z]\in \mathbf{H}^n(\mathfrak{X}/C)$ is \emph{stable} if and only if the  conditions (i) and (ii) in Proposition \ref{thm-stable-locus} are satisfied. We denote the open subspace of stable points by $\mathbf{H}^n_{\st}(\mathfrak{X}/C)$.

\end{definition}

\begin{definition}
The stacky quotient 
\[
\mathcal{I}_{\mathfrak{X}/C}^n=[\mathbf{H}^n_{\st}(\mathfrak{X}/C)/G[n]]
\]
is called the \emph{good degeneration} of (the general fibers of) $\mathrm{Hilb}^n(\mathfrak{X}/C)$.
\end{definition}

The following theorem is \cite[Theorem 4.14]{LW15} and \cite[Theorem 3.2]{GHH19}.
\begin{theorem}\label{thm-properness-X-projective}
Let $\mathfrak{X}$ be a smooth quasi-projective scheme and $\pi:\mathfrak{X}\rightarrow C$ a bipartite simple degeneration. Then
$\mathcal{I}_{\mathfrak{X}/C}^n$ is a Deligne-Mumford stack proper over $C$, and has a projective coarse moduli scheme.
\end{theorem}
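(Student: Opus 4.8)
The plan is to present $\mathcal{I}^n_{X/C}$ as a GIT quotient and to extract all three assertions --- Deligne--Mumford, proper over $C$, projective coarse space --- from the stability analysis of Proposition~\ref{thm-stable-locus} together with the formalism of geometric invariant theory for the reductive group $G[n]$. Since $X$ is quasi-projective and $\pi$ is proper, $\mathbf{H}^n(X/C)$ is proper over $C[n]$; Proposition~\ref{thm-stable-locus} supplies a $G[n]$-linearized relative ample line bundle $\mathcal{M}$, so $\mathbf{H}^n(X/C)$ is in fact projective over $C[n]$, and more importantly the semistable and stable loci for $\mathcal{M}$ both coincide with $\mathbf{H}^n_{\st}(X/C)$. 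This equality $(\mathbf{H}^n)^{ss}=(\mathbf{H}^n)^{s}$ is the single input that will force the quotient to be simultaneously separated and proper.

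First I would pin down the base of the quotient. The group $G[n]$ is the maximal torus of $\mathrm{SL}(n+1)$, hence linearly reductive, acting on $C[n]=C\times_{\mathbb{A}^1}\mathbb{A}^{n+1}$ trivially on $C$ and via (\ref{eq-action-1}) on $\mathbb{A}^{n+1}$. As $C[n]$ is affine over $C$, the invariants are computed from $\Bbbk[t_1,\dots,t_{n+1}]^{G[n]}$: a monomial is invariant exactly when all of its exponents coincide, so the invariant ring is $\Bbbk[t_1\cdots t_{n+1}]$ and $C[n]/\!\!/G[n]=C\times_{\mathbb{A}^1}\mathbb{A}^1=C$. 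Relative GIT over this affine-over-$C$ base then produces, from the $G[n]$-invariant sections of the powers of $\mathcal{M}$, a good quotient $M=\mathbf{H}^n_{\st}(X/C)/\!\!/G[n]$ that is projective over $C[n]/\!\!/G[n]=C$.

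Next I would upgrade to the stacky statement. Because every semistable point is stable, $M$ is a geometric quotient: the $G[n]$-orbits in $\mathbf{H}^n_{\st}(X/C)$ are closed with finite stabilizers, and the action map $G[n]\times\mathbf{H}^n_{\st}\to\mathbf{H}^n_{\st}\times\mathbf{H}^n_{\st}$ is proper. Properness of the action is precisely the separatedness of $\mathcal{I}^n_{X/C}=[\mathbf{H}^n_{\st}/G[n]]$, and finiteness of the stabilizers gives a quasi-finite diagonal; since the stabilizers are diagonalizable subgroups of the torus $G[n]$, in characteristic zero they are \'etale, so $\mathcal{I}^n_{X/C}$ is a Deligne--Mumford stack and $M$ is its coarse moduli scheme. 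As $M$ is projective over $C$, the structure morphism $\mathcal{I}^n_{X/C}\to C$ is proper, which is the remaining assertion.

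Finally, I expect the substantive difficulty to lie not in this formal packaging but in two places upstream. The first, already granted here, is Proposition~\ref{thm-stable-locus}: producing a linearization for which $(\mathbf{H}^n)^{ss}=(\mathbf{H}^n)^{s}$ is the heart of the matter, since the absence of strictly semistable points is what simultaneously yields separatedness and properness. The second, which I would check with care, is that the GIT base really is $C$ rather than merely a space agreeing with $C$ away from $0$: over the central fiber $C[n]_0=\mu^{-1}(0)$ is the union of coordinate hyperplanes and the torus action degenerates, so I would verify, using the local model of Proposition~\ref{prop-simple-degeneration-local-model} and the length-vector condition of Proposition~\ref{thm-stable-locus}(ii), that stability pins down a unique orbit over each boundary point and hence that no semistable subscheme escapes to the boundary --- this is exactly what the valuative criterion of properness requires.
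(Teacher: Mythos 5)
There is nothing in the paper to compare your proof against: the paper does not prove this theorem, but quotes it, the preceding sentence attributing it to \cite[theorem 4.14]{LW15} and \cite[theorem 3.2]{GHH19}. What you have written is in effect a reconstruction of the GIT proof of Gulbrandsen--Halle--Hulek, which is clearly the intended route here, since Proposition \ref{thm-stable-locus} is imported precisely so that the stable locus of Definition \ref{def-good-degeneration-Hilbert-scheme-points} agrees with the GIT-(semi)stable locus. Your skeleton is the right one and the key computations are correct: $\Bbbk[t_1,\dots,t_{n+1}]^{G[n]}=\Bbbk[t_1\cdots t_{n+1}]$, so $C[n]/\!\!/G[n]=C$; relative GIT over the affine-over-$C$ base $C[n]$ yields a good quotient $M$ of $\mathbf{H}^n_{\st}(X/C)$ projective over $C$; the absence of strictly semistable points makes $M$ a geometric quotient with proper $G[n]$-action and finite stabilizers, so $[\mathbf{H}^n_{\st}/G[n]]$ is separated with coarse space $M$. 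Two small points: properness of the stack over $C$ uses, in addition to projectivity of $M$, that the coarse moduli map $\mathcal{I}^n_{X/C}\rightarrow M$ is proper (Keel--Mori, finite inertia), which you should say explicitly; and your closing worry about subschemes ``escaping to the boundary'' is already discharged by the invariant-ring computation --- projectivity of $M$ over $C[n]/\!\!/G[n]=C$ \emph{is} the properness of the coarse space, and no separate valuative-criterion check is needed.

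The one issue you should not let slip by in mid-proof is the characteristic. Your argument for the Deligne--Mumford property invokes $\mathrm{char}(\Bbbk)=0$ to conclude that the finite diagonalizable stabilizers are \'etale, and this hypothesis is genuinely necessary, not a convenience. For instance, for $n=2$, the point of $\mathbf{H}^2_{\st}$ given by a nonreduced length-$2$ subscheme supported at a single point of the middle bubble, over the stratum $t_1=t_3=0$, $t_2\neq 0$, is stable (its length vector is $(0,2,0)=\mathbf{v}_{\mathbf{a}}$) and has stabilizer $\mu_2\subset G[2]$; in characteristic $2$ this group scheme is infinitesimal rather than \'etale, so the quotient stack fails to be Deligne--Mumford there. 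The theorem as printed in the paper carries no characteristic hypothesis (section 2 fixes an arbitrary base field), so strictly speaking your argument proves the statement only in characteristic zero; that restriction is consistent with the paper's global standing assumption for its main results and with the cited sources, but it should be stated as a hypothesis at the outset rather than appear silently in the \'etaleness step.
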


\begin{remark}\label{rmk-baseField}
Both \cite{LW15} and \cite{GHH19} assume that the base field $\Bbbk$ is algebraically closed and $\mathrm{char}(\Bbbk)=0$. Both assumptions are redundant.

In fact, by Definition \ref{def-good-degeneration-Hilbert-scheme-points}, for any field extension $\Bbbk\subset K$ it holds that $\mathcal{I}_{\mathfrak{X}_K/C_K}^n\cong \mathcal{I}_{\mathfrak{X}/C}^n\times_k K$, as the usual base change of Hilbert schemes. So the statement over arbitrary base field $\Bbbk$ follows from that over an algebraically closed one. Since the group $G[n]\cong \mathbb{G}_m^n$ is linearly reductive in all characteristics, the GIT approach in \cite{GHH19} holds in positive characteristics (probably also the approach of \cite{LW15}). 
\end{remark}

\subsection{Good degeneration of Hilbert schemes of points on smooth proper algebraic spaces}\label{sec:properness-algeSpace}

In the following of this section we generalize the properness statement in  Theorem \ref{thm-properness-X-projective} to algebraic spaces. 
Our strategy is to reduce the problem to the projective case. For this we need  an equivalent statement (Proposition \ref{prop-semisstablereduction-zero-dim-subschemes}) of the properness. 
We begin with several constructions. For ease of notations, for a scheme $X$ and a quasi-coherent sheaf of ideals $\mathcal{I}$ generated by regular functions $h_1,\dots,h_k$ on $X$, we let $V(h_1,\dots,h_k)=V(\mathcal{I})$, the closed subscheme defined by $\mathcal{I}$.

\begin{construction}\label{cons-specialization-0dim-subscheme-1}
Let $\pi:\mathfrak{Y}\rightarrow C$ be a local simple degeneration of chain type of length $l$, and $\tau:C\rightarrow \mathbb{A}^1=\Spec\ \Bbbk[t]$ an étale morphism such that $\tau^{-1}(0)=0$ as the beginning of this section. The $l$ components of the fiber $\pi^{-1}(0)$ has been labelled $Y_1,\dots,Y_l$.  Let $m$ be a natural number. Let $h_m:\mathbb{A}^1\rightarrow \mathbb{A}^1$ be the morphism induced by $t\mapsto t^m$. Consider the morphism $\mathfrak{Y}_m\rightarrow C_m$ defined by the following cartesian diagram:
\[
\xymatrix{
	\ar @{} [dr] |{\square} \mathfrak{Y}_m \ar[r]^{\pi_m} \ar[d]_{f_m} & \ar @{} [dr] |{\square} C_m \ar[r]^{\tau_m} \ar[d]_{g_m} & \mathbb{A}^1 \ar[d]^{h_m} &  \\
	\mathfrak{Y} \ar[r]^{\pi} & C \ar[r]^{\tau} & \mathbb{A}^1 & .
}
\]
Then $C_m$ is a regular curve, and away from $0=\tau_m^{-1}(0)$, $\pi_m$ is smooth.
The output of this construction is $\tilde{\pi}_m:\widetilde{\mathfrak{Y}_m}\rightarrow C_m$ defined as follows.

 If $m=1$ we set $\widetilde{\mathfrak{Y}_1}=\mathfrak{Y}_1=\mathfrak{Y}$. Suppose $m>1$. Let $t'=(\tau_m\circ \pi_m)^*(t)$. We successively blow up $\mathfrak{Y}_m$ along $f^{-1}(Y_2)\cap V(t'^{m-1})$, the strict transform of $f^{-1}(Y_2)\cap V(t'^{m-2})$, \dots, and  the strict transform of $f^{-1}(Y_2)\cap V(t')$; then we successively blow up  along the strict transform of  $f^{-1}(Y_3)\cap V(t'^{m-1})$, the strict transform of $f^{-1}(Y_3)\cap V(t'^{m-2})$, \dots, and  the strict transform of $f^{-1}(Y_3)\cap V(t')$; \dots; we successively blow up  along the strict transform of  $f^{-1}(Y_l)\cap V(t'^{m-1})$, the strict transform of $f^{-1}(Y_l)\cap V(t'^{m-2})$, \dots, and finally the strict transform of $f^{-1}(Y_l)\cap V(t')$.  Denote the resulted space by $\widetilde{\mathfrak{Y}_m}$.  When $\pi$ is the local model $\Spec\ \Bbbk[x,y,\dots,t]/(xy-t)\rightarrow \Spec\ \Bbbk[t]$,  $\widetilde{\mathfrak{Y}_m}$  is the subscheme of 
\[
\Spec\ \Bbbk[t',x,y,\dots]\times \prod_{i=1}^{m-1}\mathrm{Proj}\ \Bbbk[u_i,v_i]
\]
defined by
\begin{eqnarray}\label{eq-local-equations-3}
	\begin{cases}
	u_1 x=v_1 t',\\
	u_i v_{i-1}=v_i u_{i-1} t',\ 1<i\leq m-1,\\
	v_{m-1} y=u_{m-1} t'.
	\end{cases}
\end{eqnarray}
It follows that $\widetilde{\mathfrak{Y}_m}$ is regular, and $\tilde{\pi}_m:\widetilde{\mathfrak{Y}_m}\rightarrow C_m$ is a simple degeneration of chain type of length $(l-1)m+1$. we can perform this construction successively. The following lemma follows directly. 
\begin{lemma}\label{lem-construction-1-transitivity}
Let $\pi:\mathfrak{Y}\rightarrow C$ be a local simple degeneration of chain type. Let $m_1$ and $m_2$ be natural numbers. Let us apply Construction \ref{cons-specialization-0dim-subscheme-1} to $\pi$ with $m_1$, and then apply Construction \ref{cons-specialization-0dim-subscheme-1} to the resulted simple degeneration of chain type, with $m_2$. The resulted simple degeneration of chain type is naturally identified with the Construction \ref{cons-specialization-0dim-subscheme-1} applied to $\pi$ with $m_1 m_2$.
Namely,
\begin{equation}
	\widetilde{\big(\widetilde{\mathfrak{Y}_{m_1}}\big)_{m_2}}\cong \widetilde{\mathfrak{Y}_{m_1 m_2}}.
\end{equation}
\end{lemma}

\begin{remark}
Note that this construction depends only on  $\pi:\mathfrak{Y}\rightarrow C$ together with the distinguished point $0\in C$, and $m\in \mathbb{N}$, and is independent of the choice of the étale morphism $\tau:C\rightarrow \mathbb{A}^1$. 
\end{remark}



\end{construction}

\begin{construction}(\cite[2.8.1, 2.8.5]{EGA-IV})\label{cons-specialization-0dim-subscheme-2}
Let $R$ be a DVR,  and $K$ the fraction field of $R$. Let $\mathcal{Y}$ be an algebraic space  and  $\pi:\mathcal{Y}\rightarrow \Spec(R)$ a quasi-compact separated dominant morphism. Denote by $\iota:\mathcal{Y}_K\hookrightarrow \mathcal{Y}$ the inclusion of the generic fiber. Let $Z$ be a $0$-dimensional closed subspace of $\mathcal{Y}_K$. The image of $\mathcal{O}_{\mathcal{Y}}\rightarrow \iota_* \mathcal{O}_Z$ is the structure sheaf of a closed subspace of $\mathcal{Y}$, denoted by $\overline{Z}$. Then $\overline{Z}$ is flat over $R$ and $\overline{Z}_K\cong Z$; moreover $\overline{Z}$ is the unique closed subspace of $\mathcal{Y}$ that satisfies both properties. The underlying space of points $|\overline{Z}|$ is the closure of $|Z|$ in $\mathcal{Y}$ (\cite[9.5.4]{EGA-I}).
When $R$ and $\mathcal{Y}$ are essentially of finite type over a field $\Bbbk$ and $\pi$ is proper, $\overline{Z}$ is no other than the output of the valuative criterion for properness applied to $\mathrm{Hilb}^n\big(\mathcal{Y}/\Spec(R)\big)$ (e.g. \cite[the end of \S 2]{OS03}), where $n=\mathrm{length}(Z)$.
\end{construction}

A composition of Constructions \ref{cons-specialization-0dim-subscheme-2} and \ref{cons-specialization-0dim-subscheme-1} yields the following one.
\begin{construction}\label{cons-specialization-0dim-subscheme-3}
Suppose there is given  $\mathfrak{Y}\xrightarrow{\pi}C\xrightarrow{\tau}\mathbb{A}^1$ and $m\in \mathbb{N}$ as in Construction \ref{cons-specialization-0dim-subscheme-1}. Let $R$ a DVR, $\rho:\Spec(R)\rightarrow C$ a morphism such that $\rho$ sends the closed point of $\Spec(R)$ to $0\in C$, and the generic point to $C\setminus\{0\}$. Denote the fraction field of $R$ by $K$. Let $\mathcal{Y}=\mathfrak{Y}\times_{C}\Spec(R)$. Let $Z$ be a closed subscheme of $Y=\mathfrak{Y}\times_{C}\Spec(K)$ of length $n$. These form our input data.

Now applying Construction \ref{cons-specialization-0dim-subscheme-1} to $\mathfrak{Y}\xrightarrow{\pi}C\xrightarrow{\tau}\mathbb{A}^1$ and $m$, we obtain $\tilde{\pi}_m:\widetilde{\mathfrak{Y}_m}\rightarrow C_m$. Let $L$ (resp. $L_m$) be the field of rational functions on $C$ (resp. $C_m$). Let $K_m$ be a finite extension of $K$ such that  there is a commutative diagram
\begin{equation}\label{graph-cons-specialization-0dim-subscheme-1}
\xymatrix{
	K_m & L_m \ar[l] & \\
	K \ar[u] & L \ar[u]_{g_m^*} \ar[l]_>>>>>>>{\rho^*} & .
}
\end{equation}
Such $K_m$ exists; for example one can take $K_m$ as the residue field of one component of $K\otimes_L L_m$. Let $R_m$ be the localization at one closed point of the integral closure of $R$ in $K_m$. Then $R_m$ is also a DVR, and by the properness of $C_m\xrightarrow{g}C$, there is a commutative diagram completing (\ref{graph-cons-specialization-0dim-subscheme-1}):
\begin{equation}\label{graph-cons-specialization-0dim-subscheme-2}
 	\xymatrix{
 	\Spec(R_m) \ar[r]^>>>>>{\rho_m} \ar[d] & C_m \ar[d]^{g_m} & \\
 	\Spec(R) \ar[r]^>>>>>>{\rho} & C &,
 	}
 \end{equation} 
and $\rho_m$ sends the closed point of $\Spec(R_m)$ to $0=g_m^{-1}(0)\in C_m$. 
Let $\mathcal{Y}_m$ be the fiber product:
\begin{equation}\label{graph-cons-specialization-0dim-subscheme-3}
	\xymatrix{
	\ar @{} [dr] |{\square} 
	\mathcal{Y}_m \ar[r]^{\theta_m} \ar[d]_{p_m} & \widetilde{\mathfrak{Y}_m} \ar[d]^{\tilde{\pi}_m}& \\
	\Spec(R_m) \ar[r]^>>>>>{\rho_m} & C_m& .
	}
\end{equation}
Then $Z_m:=Z\times_K K_m$ is a closed subscheme of 
\[
Y_m:=\mathcal{Y}_m\times_{\Spec(R_m)}\Spec(K_m)
\] 
of length $n$. Applying the Construction \ref{cons-specialization-0dim-subscheme-2} to $\mathcal{Y}_m/\Spec(R_m)$ and $Z_m$ we obtain a closed subscheme $\mathcal{Z}_m\subset \mathcal{Y}_m$. We regard $p_m:\mathcal{Y}_m\rightarrow\Spec(R_m)$ and $\mathcal{Z}_m$ as the output of this construction. 

Note that there are choices for $K_m$ and $R_m$ in this construction, but we take an arbitrary one. For a fixed $m$, and a given choice of $K_m$ and $R_m$, this construction is functorial in $\mathfrak{Y}$. 
\end{construction}

\begin{proposition}\label{prop-semisstablereduction-zero-dim-subschemes}
Keep the notations of Construction \ref{cons-specialization-0dim-subscheme-3}.  Denote by $D_m$ the singular locus of the special fiber of $\tilde{\pi}_m:\widetilde{\mathfrak{Y}_m}\rightarrow C_m$. Then there exists $m$ such that $\mathcal{Z}_m$ does not meet $\theta_m^{-1}(D_m)$.
\end{proposition}

\begin{proof}
The statement is equivalent to that the closure of the underlying topological space of $Z_m$ does not meet $\theta_m^{-1}(D_m)$. Let $D$ be the singular locus of the special fiber of $\mathfrak{Y}\rightarrow C$.

\textsc{Step 0}: Since the construction on $\mathfrak{Y}$ depends only on the composition $\mathfrak{Y}\xrightarrow{\tau\circ \pi}\mathbb{A}^1$, we can assume $C$ to be an open subset of $\mathbb{A}^1$ containing $0$. 

\textsc{Step 1}: The statement is étale local on $\mathfrak{Y}$, in the following sense. Let $\{f_{\alpha}:\mathfrak{U}_\alpha\rightarrow \mathfrak{Y}\}_{\alpha\in A}$ be a finite family of étale morphisms of finite type, such that the union of their images cover $D$. Let $\pi_{\alpha}=\pi\circ f_{\alpha}$, and
\[
\mathcal{\phi}_{\alpha}:
\mathcal{U}_{\alpha}=\mathfrak{U}_{\alpha}\times_C \Spec(R)\rightarrow
\mathcal{Y}=\mathfrak{Y}\times_C \Spec(R).
\]
the morphism induced by $f_{\alpha}$. Let  $Z_{\alpha}=\phi_{\alpha}^{-1}(Z)$. For $k\geq 1$, denote by $\widetilde{f_{\alpha,k}}$ the morphism in the commutative diagram
\[
\xymatrix{
	\widetilde{(\mathfrak{U}_{\alpha})_k} \ar[dr] \ar[rr]^{\widetilde{f_{\alpha,k}}} &&
	\widetilde{\mathfrak{Y}_k} \ar[dl] \\
	& C_k
}
\]
 induced by $f_{\alpha}$. Let $E_{i,k}=\widetilde{f_{\alpha,k}}^{-1}(D_k)$. 
  We claim that, if the conclusion holds for each $\mathfrak{U}_{\alpha}\xrightarrow{\pi\circ f_{\alpha}}C$ and $Z_{\alpha}$, then it holds for $\mathfrak{Y}\rightarrow C$. 
  We show this claim in three sub-steps.
\begin{enumerate}
	\item[1)]
Suppose the conclusion holds for $\mathfrak{U}_{\alpha}\xrightarrow{\pi_\alpha}C$ with $m_{\alpha}$, and choices $K_{\alpha}$ of $K_{m_{\alpha}}$ (resp. $R_{\alpha}$ of $R_{m_{\alpha}}$),  for $\alpha\in A$.  Let 
\[
m=\prod_{\alpha\in A}m_{\alpha}.
\]
 Let $R_m$ be a DVR with a local homomorphism $R\rightarrow R_m$, such that it factors through $R\rightarrow R_{\alpha}$ for $\alpha\in 
 A$, and  the quotient field extension   $K\rightarrow K_m$ is finite. For example one can take $K_m$ to be the residue field of a component of
\[
\bigotimes_{\alpha\in A} K_{\alpha},\ \mbox{where the $\otimes$ is over } K,
\]
and take $R_m$ to be the localization of the integral closure of $R$ in $K_m$ at a closed point lying over  the closed point of $R_{\alpha}$ for $\alpha\in A$.
For each $\alpha\in A$, let $R'_{\alpha}$ be a ring such that $\Spec(R'_{\alpha})\cong\Spec(R_{\alpha})\times_{C_{m_{\alpha}}}C_m$, and let $\mathcal{U}'_{\alpha}=\Spec(R'_{\alpha})\times_{C_m}\widetilde{(\mathfrak{U}_{\alpha})_m}$. Consider the following commutative diagram
\begin{equation}
	\xymatrix{
	 \mathcal{U}'_{\alpha} \ar[dr]^{\gamma'_{\alpha}} \ar[dd] \ar[rr]^{\theta'_{\alpha}}  && \widetilde{(\mathfrak{U}_{\alpha})_m} \ar'[d][dd]^{\tilde{\pi}_{m}} \ar[dr]^{\gamma_{\alpha}} \\
	 & \mathcal{U}_{\alpha} \ar[dd] \ar[rr]^<<<<<<<<{\theta_{\alpha}} && \widetilde{(\mathfrak{U}_{\alpha})_{m_{\alpha}}} \ar[dd]^{\tilde{\pi}_{m_{\alpha}}}  \\
	 \Spec(R'_{\alpha}) \ar[dr] \ar'[r]^<<<<<<<{\rho'_{\alpha}}[rr] && C_m \ar[dr] \\
	 & \Spec(R_{\alpha}) \ar[rr]^{\rho_{\alpha}} && C_{m_{\alpha}}
	}
\end{equation}
where the front, the back, the top and the bottom squares are cartesian. By Construction \ref{cons-specialization-0dim-subscheme-1} and Lemma \ref{lem-construction-1-transitivity}, $\widetilde{\mathfrak{U}_m}$ is obtained by successive blowing-ups of  
$C_m\times_{C_{m_{\alpha}}}\widetilde{\mathfrak{U}_{m_{\alpha}}}$ along centers supported away from the nodes of the special fiber. Since $\rho_{\alpha}$, and thus $\theta_{\alpha}$, are flat, and the top square is cartesian, it follows that $\mathcal{U}'_{\alpha}$ is obtained by successive blowing-ups of  
$\Spec(R'_{\alpha})\times_{\Spec(R_\alpha)}\mathcal{U}_{\alpha}$ along centers supported over 
$\theta_{\alpha}^{-1}(E_{\alpha,m_{\alpha}})$ (\cite[Tag 080F]{StPr}). By the assumption, the closure $(\mathcal{Z}_{\alpha})_{m_{\alpha}}$ of $(Z_{\alpha})_{m_{\alpha}}$ in $\mathcal{U}_{\alpha}$ does not meet 
$\theta_{\alpha}^{-1}(E_{\alpha,m_{\alpha}})$. So the strict transform of $(\mathcal{Z}_{\alpha})_{m_{\alpha}}$ via $\gamma'_{\alpha}$ does not meet $\theta'^{-1}_{\alpha}(E_{\alpha,m})$. Let $\mathcal{Z}'_{\alpha}$ be the closure of 
$\gamma'^{-1}_{\alpha}\big((Z_{\alpha})_{m_{\alpha}}\big)$ in $\mathcal{U}'_{\alpha}$.  Since the strict transform $(\mathcal{Z}_{\alpha})_{m_{\alpha}}$ via $\gamma'_{\alpha}$ contains $\mathcal{Z}'_{\alpha}$, it follows that  $\mathcal{Z}'_{\alpha}$ does not meet $\theta'^{-1}_{\alpha}(E_{\alpha,m})$.
\item[2)]
Let $\mathfrak{V}_{\alpha}$ be the open subspace $f_{\alpha}(\mathfrak{U}_{\alpha})$, the image of $\mathfrak{U}_{\alpha}$ in $\mathfrak{Y}$. Replacing $\mathfrak{U}_{\alpha}$ by $\mathfrak{V}_{\alpha}$, $\mathcal{U}_{\alpha}$ by $\mathcal{V}_{\alpha}=\widetilde{(\mathfrak{V}_{\alpha})_{m_\alpha}}\times_{C_{m_{\alpha}}}\Spec(R_{\alpha})$, 
$\mathcal{U}'_{\alpha}$ by $\mathcal{V}'_{\alpha}=\widetilde{(\mathfrak{V}_{\alpha})_{m}}\times_{C_{m}}\Spec(R_{\alpha})$, and $Z_{\alpha}$ by $Z\cap \mathcal{V}_{\alpha}$, the conclusion of \textsc{Sub-step 1}
 implies that the same conclusion holds, i.e. the closure of the preimage of $Z\cap \mathcal{V}_\alpha$ in $\mathcal{V}'_{\alpha}$ does not meet the preimage of the singular locus of the special fiber of $\widetilde{(\mathfrak{V}_\alpha)_m}$ via the morphism
\[
\vartheta'_{\alpha}:
\mathcal{V}'_{\alpha}\rightarrow \widetilde{(\mathfrak{V}_\alpha)_m}.
\]
\item[3)]

Now we consider the following commutative diagram
\begin{equation}
	\xymatrix{
	 \mathcal{Y}_{m} \ar[dr]^{\eta_{\alpha}} \ar[dd] \ar[rr]^{\vartheta_{m}}  && \widetilde{\mathfrak{Y}_m} \ar'[d][dd]^{\tilde{\pi}_{m}} \ar[dr] \\
	 & \mathcal{Y}_{\alpha} \ar[dd] \ar[rr]^<<<<<<<<{\vartheta_{\alpha}} && \widetilde{\mathfrak{Y}_{m_{\alpha}}} \ar[dd]^{\tilde{\pi}_{m_{\alpha}}}  \\
	 \Spec(R_m) \ar[dr] \ar'[r][rr] && C_m \ar[dr] \\
	 & \Spec(R_{\alpha}) \ar[rr] && C_{m_{\alpha}}
	}
\end{equation}
where the front and the back square are cartesian. The morphism $\eta_{\alpha}$, restricted to $ (\mathcal{V}_{\alpha})_m=
\eta^{-1}_{\alpha}(\mathcal{V}_{\alpha})$, factors as the first row of the commutative diagram
\[
\xymatrix{
	\eta^{-1}(\mathcal{V}_{\alpha}) \ar[d] \ar[r] & \mathcal{V}'_{\alpha} \ar[r] \ar[d]  & \mathcal{V}_{\alpha}  \ar[d] \\
	\Spec(R_m) \ar[r] & \Spec(R'_{\alpha}) \ar[r] & \Spec(R_{\alpha}).
}
\]
The morphism $\vartheta_{m}$ restricted to $\eta^{-1}(\mathcal{V}_{\alpha})$ factors as
\[
\xymatrix{
	\eta^{-1}(\mathcal{V}_{\alpha}) \ar[d] \ar[r]^{\vartheta_m}  &  \widetilde{(\mathfrak{V}_{\alpha})_m} \\
	\mathcal{V}'_{\alpha}\ar[ur]_{\vartheta'_m} &.
}
\]
From the conclusion of \textsc{Sub-step 2}
, it follows that  the closure of $Z_m\cap \eta^{-1}(\mathcal{V}_\alpha)$ in $\eta^{-1}(\mathcal{V}_\alpha)$ does not meet $\vartheta_m^{-1}\big(D_m\cap \widetilde{(\mathfrak{V}_{\alpha})_m}\big)=\vartheta_m^{-1}\big(D_m)\cap \eta^{-1}(\mathcal{V}_{\alpha})$. Since $\eta^{-1}(\mathcal{V}_\alpha)$ is open in $\mathcal{Y}_m$, the closure $\mathcal{Z}_m$ of $Z_m$ in $\mathcal{Y}_m$ does not meet $\vartheta_m^{-1}\big(D_m)\cap \eta^{-1}(\mathcal{V}_{\alpha})$. As the assumption of \textsc{Step 1}, the union 
\[
\bigcup_{\alpha\in A}\eta^{-1}(\mathcal{V}_{\alpha})
\]
contains $\vartheta_m^{-1}\big(D_m)$. So we conclude that $\mathcal{Z}_m$ does not meet $\vartheta_m^{-1}\big(D_m)$, and the claim of \textsc{Step 1} is proved.
\end{enumerate}

\textsc{Step 2}: The statement is \emph{inverse étale local} on $\mathfrak{Y}$, in the following sense. Let 
\begin{equation}\label{graph-prop-prop-semisstablereduction-zero-dim-subschemes-1}
\xymatrix{
\mathfrak{U} \ar[dr]_{\pi'}  &&  \mathfrak{Y} \ar[dl]^{\pi} \ar[ll]_{f} \\
	& C  &
}	
\end{equation}
be a commutative diagram with $f$ an étale morphism.
Let $Z'=f(Z)$, with the reduced structure. Then set theoretically we have
\[
\mathcal{Z}_m\subset f^{-1}(\mathcal{Z'}_m).
\]
Consequently, if the statement holds for $\mathfrak{U}$, then it holds also for $\mathfrak{Y}$. 

\textsc{Step 3}: Combining \textsc{Step 1} and \textsc{Step 2}, 
the statement of the proposition reduces to the local model $\pi'$ in (\ref{graph-local-model-1}). This local model admits an obvious projective completion over $C=\mathbb{A}^1$ which is a bipartite simple degeneration. Thus  we can assume that $\mathfrak{Y}$ is projective over $C$.
We can deduce the proposition from Theorem \ref{thm-properness-X-projective}, as follows.

By Theorem \ref{thm-properness-X-projective}, $\mathcal{I}_{\mathfrak{Y}/C}^n$ is proper over $C$. Consider  the square on the right in the following diagram
\[
\xymatrix{
\Spec(K') \ar[r] \ar[d] &	\Spec(K) \ar[r]^>>>>>{f} \ar[d]^>>>>{i} & \mathcal{I}_{\mathfrak{Y}/C}^n \ar[d]^{p} \\
\Spec(R') \ar[r] \ar@{-->}[rru]^<<<<<<<<<{h}  &	\Spec(R) \ar[r]^>>>>>>{g} & C
}
\]
where $f$ is induced by the subscheme $Z$. By the valuative criterion for Deligne-Mumford stacks, there exists a finite extension $K'$ of $K$, and the normalization $R'$  of $R$ in $K'$ such that there exists a dashed filling $h$ in the diagram. We replace $R'$ by the localization of $R'$ at a closed point, so that $R'$ is a DVR. By definition, such a morphism $h$ represents a $\mathbb{G}_m^n$-torsor (in the étale topology) $P$ and a $\mathbb{G}_m^n$-equivariant morphism $P\rightarrow \mathbf{H}^n_{\st}(\mathfrak{Y}/C)$. There exists an étale extension $R'\rightarrow R''$ such that the torsor $P$ acquires a section over $\Spec(R'')$. Again we can assume that $R''$ is a DVR. Thus there is a morphism $\tilde{h}:\Spec(R'')\rightarrow \mathbf{H}^n_{\st}(\mathfrak{Y}/C)$ rendering the following diagram commutative:
\[
\xymatrix{
	\Spec(R'') \ar[r]^<<<<<{\tilde{h}} \ar[d] & \mathbf{H}^n_{\st}(\mathfrak{Y}/C) \ar[d] & \\
	\Spec(R') \ar[r]^{h} & \mathcal{I}_{\mathfrak{Y}/C}^n \ar[r] & C[n]=C\times_{\mathbb{A}^1}\mathbb{A}^{n+1}
}
\]
Denote the composition $\Spec(R'')\rightarrow C[n]$ by $q$. Let $\varpi''$ be a uniformizer of $R''$, and suppose $q^* t_i=\xi_i \varpi''^{e_i}$,  for $1\leq i\leq n+1$, where $\xi_i$ are units of $R''$. Then $\mathfrak{Y}[n]\times_{C[n]}\Spec(R'')$ is étale locally the subscheme of 
\[
\Spec\ R''[x,y,\dots]\times \prod_{i=1}^{n}\mathrm{Proj}\ \Bbbk[u_i,v_i]
\]
defined by (the units $\xi_i$ are absorbed into $u_i$ or $y$)
\begin{eqnarray}\label{eq-local-equations-4}
	\begin{cases}
	u_1 x=v_1 \varpi''^{e_1},\\
	u_i v_{i-1}=v_i u_{i-1} \varpi''^{e_i},\ 1<i\leq n,\\
	v_{n} y=u_{n} \varpi''^{e_{n+1}}.
	\end{cases}
\end{eqnarray}
If some $e_i=0$, then deleting this index will yield an isomorphic scheme. So we can assume $e_i\geq 1$ for $1\leq i\leq n$.
The morphism $\tilde{h}$ represents  a closed subscheme $W$ of 
$\mathfrak{Y}[n]\times_{C[n]}\Spec(R'')$ which is finite and flat of degree $n$ over $\Spec(R'')$, such that the support of the central fiber of  $W$ satisfies the condition in Proposition \ref{thm-stable-locus}, and the generic fiber of $W$ is isomorphic to $Z$. In particular $q^{-1}W$ does not meet the intersections of any two components of the special fiber of $\mathfrak{Y}[n]\times_{C[n]}\Spec(R'')$. Further blowing up successively the strict transform of $V(u_i,\varpi^k)$ and $V(y,\varpi^k)$ for appropriate $i$ and $k$'s, we complete  the Construction \ref{cons-specialization-0dim-subscheme-3} for $R''$, and the resulted subscheme $W''$ satisfies the requirement of our statement. Hence the proof is complete.
\end{proof}

\begin{theorem}\label{thm-properness-X-algebraic-space}
Let $\mathfrak{X}$ be a smooth algebraic space over $\Bbbk$, and $\pi:\mathfrak{X}\rightarrow C$ a bipartite simple  degeneration. Then $\mathcal{I}_{\mathfrak{X}/C}^n$ is a Deligne-Mumford stack proper over $C$.
\end{theorem}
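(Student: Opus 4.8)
The plan is to establish algebraicity and finite type formally, the Deligne--Mumford property by an étale-local reduction to the quasi-projective case, and properness through the valuative criterion, whose existence part is supplied by Proposition~\ref{prop-semisstablereduction-zero-dim-subschemes}. First I would record that $\mathcal{I}^n_{X/C}=[\mathbf{H}^n_{\st}(X/C)/G[n]]$ is an algebraic stack of finite type over $C$: the algebraic space $\mathbf{H}^n(X/C)$ is proper, hence of finite type, over $C[n]$; its open subspace $\mathbf{H}^n_{\st}(X/C)$ inherits finite type; $C[n]\to C$ is of finite type since it is the base change of $\mathbb{A}^{n+1}\to\mathbb{A}^1$ along $t$; and quotienting by the smooth affine group $G[n]\cong\mathbb{G}_m^{\,n}$ preserves these properties. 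The entire difficulty beyond Theorem~\ref{thm-properness-X-projective} is that, for a general algebraic space $X$, the GIT construction underlying that theorem is unavailable, so both the Deligne--Mumford property and properness must be obtained without a projective coarse space.

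For the Deligne--Mumford property, since $\mathrm{char}\,\Bbbk=0$ it suffices to show that the $G[n]$-stabilizer of every stable point $[Z]$ is finite (reducedness of stabilizers being automatic in characteristic zero). Fix $[Z]$ lying over $c\in C[n]$ with $I_{[Z]}=I_{\mathbf{a}}$, so the fiber is a chain $\Delta_I^{a_0}\cup\cdots\cup\Delta_I^{a_r}$ as in Proposition~\ref{prop-fiber-over-stratum}. The stabilizer of $c$ in $G[n]$ is the subtorus with $\sigma_i=1$ for $i\notin I_{\mathbf{a}}$, and it acts on the intermediate $\mathbb{P}_D$-components by rescaling their $\mathbb{P}^1$-fibers. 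Condition~(ii) of Proposition~\ref{thm-stable-locus} forces $\mathbf{v}(Z)=\mathbf{v}_{\mathbf{a}}$, all of whose entries are positive, so every component carries positive length and no nontrivial subtorus can fix $Z$; hence $\mathrm{Stab}([Z])$ is finite. Because $[Z]$ is supported on the smooth locus (condition~(i)), this computation only sees the universal family near $\mathrm{supp}(Z)$, where $X[n]$ together with its $G[n]$-action is étale-locally the model of Proposition~\ref{prop-simple-degeneration-local-model}; the finiteness therefore agrees with, and may be imported from, the quasi-projective local model, where it is part of Theorem~\ref{thm-properness-X-projective}.

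It remains to verify properness over $C$ by the valuative criterion. Let $R$ be a DVR with fraction field $K$, together with $\Spec R\to C$ and a $K$-point of $\mathcal{I}^n_{X/C}$. If the closed point of $\Spec R$ maps to $\xi\neq 0$, then $X\times_C\Spec R$ is smooth and proper over $\Spec R$ and $\mathcal{I}^n_{X/C}$ restricts there to $\mathrm{Hilb}^n$ of a smooth proper family, for which the criterion is classical; so I may assume the closed point maps to $0$, and then the $K$-point is a length-$n$ subscheme $Z\subset X_K$. For the existence part I invoke Proposition~\ref{prop-semisstablereduction-zero-dim-subschemes} with $\mathcal{Y}=X$ (which satisfies condition~(i)): after a finite extension $R'/R$ and the associated expanded modification, the flat limit of $Z$ is realized, as in the proof of that proposition which produces a point of $\mathbf{H}^n_{\st}$, as a \emph{stable} subscheme satisfying both conditions of Proposition~\ref{thm-stable-locus}; this is exactly a lift $\Spec R'\to\mathcal{I}^n_{X/C}$. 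For uniqueness, two lifts with isomorphic generic fibers both yield stable limits of the same $Z$; since each avoids the double locus it is a genuine flat limit, unique by Construction~\ref{cons-specialization-0dim-subscheme-2} once the modification is fixed, and the modification is determined by $Z$ and $R$ up to the $G[n]$-action, so the two lifts are isomorphic. Again this uniqueness is étale-local near the supports and matches the separatedness of the quasi-projective local model.

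I expect the main obstacle to be precisely this transfer from the quasi-projective local models to the global algebraic space while the torus $G[n]$ acts globally: namely, (a) checking that the finite-stabilizer and separatedness statements, being local near $\mathrm{supp}(Z)$, are faithfully captured by the model of Proposition~\ref{prop-simple-degeneration-local-model}, and (b) confirming that the limit furnished by Proposition~\ref{prop-semisstablereduction-zero-dim-subschemes} satisfies the length condition~(ii), and not merely support away from the singular locus — a point that requires reading its proof, where the limit is taken inside $\mathbf{H}^n_{\st}$ rather than in the naive flat closure.
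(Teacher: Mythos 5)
Your overall strategy coincides with the paper's: establish separatedness, then verify the existence part of the valuative criterion by feeding the $K$-point $Z\subset X_K$ into Proposition~\ref{prop-semisstablereduction-zero-dim-subschemes}. However, there is a genuine gap at exactly the step you flag as obstacle~(b), and the resolution you suggest does not work. The proposition's \emph{conclusion} only produces a closed subspace $W\subset Y''$, finite flat of degree $n$ over $\Spec(R')$, whose special fiber avoids the singular locus of $Y''_0$. Here $Y''$ is a modification of $X\times_C\Spec(R')$ whose special fiber is a chain of $k+1$ components with $k$ possibly much larger than $n$, and $W$ carries no expansion structure at all: it is not a point of $\mathbf{H}^n_{\st}(X/C)$, nor even a point of $\mathbf{H}^n(X/C)$, because no morphism $\Spec(R')\to C[n]$ has been specified. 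Your appeal to "as in the proof of that proposition, which produces a point of $\mathbf{H}^n_{\st}$" is where the circular reasoning hides: inside that proof, the stable point is produced by applying the valuative criterion of Theorem~\ref{thm-properness-X-projective} to the étale-local \emph{projective} model $\mathcal{Y}$, which is legitimate there because "avoiding the singular locus" is an étale-local condition that can be checked on the model. But "being a point of the global space $\mathbf{H}^n_{\st}(X/C)$" is not an étale-local notion, so the stable point constructed for the local model cannot be imported to the algebraic space $X$ — doing so would presuppose the very properness you are trying to prove.

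The actual content of the paper's proof is the reconstruction step that bridges this gap: one records the lengths $l_i=\mathrm{length}(W\cap P_i)$ on the components $P_0,\dots,P_k$ of $Y''_0$, lets $l_{i_1},\dots,l_{i_b}$ ($b\le n$) be the nonzero intermediate ones, defines an explicit arc $q:\Spec(R')\to C[b]$ by $q^*(t_j)=\varpi'^{\,l_{i_j}}$, and then composes with a coordinate re-indexing embedding $\iota:C[b]\hookrightarrow C[n]$ chosen so that the vanishing set $I_{\mathbf{a}}$ consists of the partial sums $l_0+l_{i_1}+\cdots+l_{i_j}$. This choice is precisely what makes the gaps $a_{j+1}-a_j$ equal to the observed lengths, i.e.\ what forces the numerical stability condition $\mathbf{v}(Z)=\mathbf{v}_{\mathbf{a}}$ of Proposition~\ref{thm-stable-locus}(ii) to hold, turning the pair $(\iota\circ q, W)$ into an honest morphism $\Spec(R')\to\mathbf{H}^n_{\st}(X/C)$ and hence the desired lift to $\mathcal{I}^n_{X/C}$. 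Without this construction your "this is exactly a lift" is an assertion, not a proof. Two smaller points: the proposition's setup (Construction~\ref{cons-specialization-0dim-subscheme-1}) requires $Y$ regular with two-component special fiber, so when $\Spec(R)\to C$ is ramified at $0$ you must first desingularize $X\times_C\Spec(R)$ by the successive blowups along $(y,\varpi^j)$, as the paper does, before the proposition applies; and for uniqueness the paper simply invokes separatedness of $\mathbf{H}^n_{\st}(X/C)$, whereas your argument that "the modification is determined by $Z$ and $R$ up to the $G[n]$-action" is itself an unproved claim of the same order of difficulty as what it is meant to establish.
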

\begin{proof}
To show that the quotient stack $\mathcal{I}_{\mathfrak{X}/C}^n=[\mathbf{H}^n_{\st}(\mathfrak{X}/C)/G[n]]$ is a DM-stack, it suffices to show that for every algebraically closed field $k$, and $z\in \mathcal{I}_{\mathfrak{X}/C}^n(k)$, the automorphism group scheme $\underline{\mathrm{Aut}}_z$ is a reduced finite $k$-scheme.   By Definition \ref{def-good-degeneration-Hilbert-scheme-points}, when $z$ represents a non-empty 0-dimensional subspace (denoted still by $z$) of a $\mathbb{P}^1$-bundle $P$ over an algebraic space over $k$, the stabilizer subgroup scheme of $\mathbb{G}_m$ is a reduced finite $k$-scheme. Since $\underline{\mathrm{Aut}}_z$ is a subscheme of $\underline{\mathrm{Aut}}_{z_{\mathrm{red}}}$, we can assume that $z$ is a finite set of reduced closed points. Taking a $\mathbb{P}^1$ fiber containing a point in $z$, it suffices to consider the case $P=\mathbb{P}^1$, and in this case $\underline{\mathrm{Aut}}_{z_{\mathrm{red}}}$ is just the group scheme associated to the abstract finite subgroup of $k^*$ preserving the support of $z$.  So $\mathcal{I}_{\mathfrak{X}/C}^n$ is a DM-stack.

Since  $\mathbf{H}^n_{\st}(\mathfrak{X}/C)$ is separated, $\mathcal{I}_{\mathfrak{X}/C}^n=[\mathbf{H}^n_{\st}(\mathfrak{X}/C)/G[n]]$ is separated. We need only to show the existence part of the valuative criterion for the properness of $\mathcal{I}_{\mathfrak{X}/C}^n\rightarrow C$. We need to consider the commutative diagram of the form
\[
\xymatrix{
	\Spec(K) \ar[r]^>>>>>{f} \ar[d] & \mathcal{I}_{\mathfrak{X}/C}^n \ar[d]^{p} \\
	\Spec(R) \ar[r]^>>>>>>{g} & C
}
\]
where $R$ is a DVR and $K$ the fraction field of $R$. By Proposition \ref{prop-flatness-good-degeneration}, $p$ is flat. So $p^{-1}(C\setminus\{0\})$ is open and dense in $\mathcal{I}_{\mathfrak{X}/C}^n$. Thus from (\cite[Theorem 4.19]{DeM69}, \cite[Tag 0CM5]{StPr}), without loss of generality, we can assume $\Spec(K)$ maps into $p^{-1}(C\setminus\{0\})$.  The following proof is basically a reverse of \textsc{Step 3} of the proof of Proposition \ref{prop-semisstablereduction-zero-dim-subschemes}. 

  Let $\varpi$ be a uniformizer of $R$. Suppose $g^*(xy)=\xi \varpi^m$, where $\xi\in R$ is a unit. Blowing up $\mathfrak{X}\times_C \Spec(R)$ successively along $(y,\varpi^{m-1})$,\dots,$(y,\varpi)$ we obtain a regular scheme $Y$ as in the setup of the construction \ref{cons-specialization-0dim-subscheme-1}. The morphism $f$ induces a $0$-dimensional closed subscheme $Z$ of $Y_K$. 
Then we apply Proposition \ref{prop-semisstablereduction-zero-dim-subschemes} to $Y$ and $Z$, thus obtain  $s:\Spec(R')\rightarrow \Spec(R)$, $Y''$ over $\Spec(R')$, and $W\subset Y''$.  Denote $g'=g\circ s$. Suppose $g'(xy)=\xi' \varpi'^{k}$, where $\xi'$ is a unit of $R'$. Replacing $R'$ be a further ramified extension if necessary, we can assume $k\geq n+1$. 
Then $Y''$ is isomorphic to the result of the successive blowing  up of $\mathfrak{X}\times_C \Spec(R')$ along $V(y,\varpi'^{k-1})$, the strict transform of  $V(y,\varpi'^{k-2})$, \dots, the strict transform of  $V(y,\varpi')$. The special fiber of $Y''$ has $k+1$ components $P_0=Y_1$, $P_1$, \dots, $P_{k-1}$, $P_k=Y_2$, arranged in the natural order such that $P_i\cap P_{i+1}$ has non-empty intersections. Suppose 
\[
l_i=\mathrm{length}(W\cap P_i).
\]
 Let $l_{i_1},\dots,l_{i_b}$ be the nonzero elements in $(l_1,\dots,l_{k-1})$, and $1\leq i_1<\dots<i_b\leq k-1$. The properness of $\mathfrak{X}\rightarrow C$ implies that the special fiber of $W$ has a length equal to $n$. Thus $l_0+l_k+\sum_{j=1}^b l_{i_j}=n$, and $b\leq n$.
We lift the morphism $g'$ to $q:\Spec(R')\rightarrow C[b]=C\times_{\mathbb{A}^1}\mathbb{A}^{b+1}$ 
\begin{equation}\label{diag-proof-thm-properness-X-algebraic-space-1}
 \xymatrix{
\mathbf{H}^{b}_{\st}(\mathfrak{X}/C) \ar[r]	 & C[b] \ar[d] \\
 \Spec(R') \ar@{-->}[u]^{\tilde{h}} \ar[ru]^{q} \ar[r]^>>>>>>{g'} & C
 }
\end{equation} 
 by requiring $q^*(t_{j})=\varpi^{l_{i_j}}$ for $1\leq j\leq b$, and $q^*(t_{b+1})=\varpi^{k-\sum_{j=1}^b l_{i_j}}$.
Then $W$ induces a morphism $\tilde{h}:\Spec(R')\rightarrow \mathbf{H}^b_{\st}(\mathfrak{X}/C)$ rendering the diagram (\ref{diag-proof-thm-properness-X-algebraic-space-1}) commutative.
Let $\iota$ be the imbedding $C[b]\hookrightarrow C[n]$ defined by
\[
\begin{cases}
\iota^*(t_{l_0+\sum_{j=1}^e l_{i_j}})=t_e,\ \mbox{for}\ 1\leq e\leq b\\
\iota^*(t_{n+1})=t_{b+1}\\
\iota^*(t_i)=1,\ \mbox{for}\ i\not\in\{l_0+l_{i_1},l_0+l_{i_1}+l_{i_2},\dots,\sum_{j=0}^{b}l_{i_j},n+1\}
\end{cases}.
\]
Then $\iota\circ q$ and $W$ induce a morphism $\Spec(R')\rightarrow \mathbf{H}^{n}_{\st}(\mathfrak{X}/C)$ rendering the diagram (\ref{diag-proof-thm-properness-X-algebraic-space-1}), replacing $b$ by $n$, commutative. The induced morphism $h$ in the following diagram
\[
\xymatrix{
\Spec(K') \ar[r] \ar[d]  & \mathcal{I}_{\mathfrak{X}/C}^n \ar[d]^{p} \\
\Spec(R') \ar[r] \ar@{-->}[ru]^<<<<<<<<<{h}  & C
}
\]
makes diagram commute. 
 This verifies the existence part of the valuative criterion, and the properness of $\mathcal{I}_{\mathfrak{X}/C}^n$ over $C$ follows.
\end{proof}

\begin{remark}
One can regard the statement of Proposition 
\ref{prop-semisstablereduction-zero-dim-subschemes} as a kind of  \emph{semistable reduction of zero dimensional subschemes}, which in the case of relative dimension 1 is very close to the  reduction of semistable curves with marked points. 
The above proofs in fact show that it is equivalent to  the properness of $\mathcal{I}_{\mathfrak{X}/C}^n$.
\end{remark}

\section{Flatness of the good degenerations of Hilbert spaces of points}\label{sec:flatness}

\subsection{Flatness of Hilbert spaces of points}
\begin{theorem}\label{thm-flatness-Hilb}
Let $\Bbbk$ be a field, $S$ a scheme of finite type over $\Bbbk$.  Let $X$ be an algebraic space separated over $\Bbbk$, and  $\pi:X\rightarrow S$ a smooth  morphism. Let $n$ be a positive integer. Then the relative Hilbert space  $\mathrm{Hilb}^{n}(X/S)$ is flat over $S$.
\end{theorem}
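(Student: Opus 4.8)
The plan is to reduce the flatness of $\mathrm{Hilb}^n(X/S)\to S$ to the case of a trivial family $\mathbb{A}^d_\Bbbk\times_\Bbbk S\to S$, for which flatness is immediate. Flatness may be tested after a surjective étale base change on $S$, it is local on the total space $H:=\mathrm{Hilb}^n(X/S)$ for the étale topology, and the formation of $\mathrm{Hilb}^n$ commutes with base change on $S$. So the goal is to show that, étale-locally on $H$, the space $H$ is isomorphic over $S$ to (an open subspace of) a product $T\times_\Bbbk S$ for some $\Bbbk$-space $T$, since the projection $T\times_\Bbbk S\to S$ is flat, being the base change to $S$ of the structure morphism $T\to\Spec\Bbbk$.

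First reduction (clustering). A $\Bbbk$-point $\eta\in H$ over $s\in S$ corresponds to a length-$n$ subspace $Z\subset X_s$ whose support consists of finitely many closed points $x_1,\dots,x_k$ of the fiber, with lengths $n_1,\dots,n_k$ and $\sum_i n_i=n$. Because subschemes with pairwise disjoint supports can be separated and recombined, the Hilbert functor splits over the locus where the support points stay apart: there is an open neighborhood of $\eta$ in $H$ that is isomorphic over $S$ to an open subspace of the fiber product $H_1\times_S\cdots\times_S H_k$, where $H_i$ parametrizes length-$n_i$ subspaces supported near $x_i$. Since a fiber product over $S$ of $S$-flat algebraic spaces is again $S$-flat, it suffices to treat each factor, i.e. to assume $k=1$: the subscheme $Z$ is supported at a single closed point $x$ of the fiber $X_s$.

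Second reduction (étale triviality near $x$). As $\pi$ is smooth of relative dimension $d$ near $x$, there is an étale $S$-neighborhood $U\to X$ of $x$ together with an étale $S$-morphism $\phi\colon U\to\mathbb{A}^d_S=\mathbb{A}^d_\Bbbk\times_\Bbbk S$. Since $\phi$ is étale, hence formally étale, it is a local isomorphism on completions, and therefore, after possibly shrinking $U$, the assignment $Z\mapsto\phi(Z)$ identifies the punctual piece $H_1$ with the corresponding locus of $\mathrm{Hilb}^{n_1}(\mathbb{A}^d_S/S)$ parametrizing subschemes supported at one point. Finally, by base-change compatibility, $\mathrm{Hilb}^{n_1}(\mathbb{A}^d_S/S)\cong\mathrm{Hilb}^{n_1}(\mathbb{A}^d_\Bbbk/\Bbbk)\times_\Bbbk S$ is of the desired product form. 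Combining the two reductions, $H$ is étale-locally over $S$ an open subspace of a product $T\times_\Bbbk S$, and the flatness conclusion follows.

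The main obstacle is to make the two local-structure statements precise and functorial for \emph{algebraic spaces} over an arbitrary base $S$ of finite type: namely (a) the product decomposition of $H$ along disjoint supports, including that the locus where the supports remain separated is genuinely open and that the splitting is compatible with the $S$-structure and descends through the chosen étale covers; and (b) the comparison under the étale map $\phi$, i.e. that $Z\mapsto\phi(Z)$ is an open immersion on the punctual locus. Both are standard features of Hilbert schemes of points, but since $X$ is only an algebraic space one must invoke étale descent together with the representability of the relative Hilbert space (cf. Section 2) rather than arguing inside an ambient projective space; once these local models are in place, the passage to flatness is formal.
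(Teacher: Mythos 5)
Your overall strategy is the same as the paper's (split by support clusters, compare étale-locally with $\mathbb{A}^d_S$, conclude by base change for the trivial family), but the step you defer as ``standard'' is exactly the crux of the proof, and your justification for it does not work. The claim that, because $\phi\colon U\to\mathbb{A}^d_S$ is étale and hence induces isomorphisms on completions, ``the assignment $Z\mapsto\phi(Z)$ identifies'' the relevant piece of $\mathrm{Hilb}^{n_1}(U/S)$ with a locus of $\mathrm{Hilb}^{n_1}(\mathbb{A}^d_S/S)$ is a pointwise statement about single subschemes; flatness requires a comparison of \emph{functors}, i.e.\ a morphism defined on families $\mathcal{Z}\subset U\times_S T$ over arbitrary $T$, and no such morphism exists in general: the composite $\mathcal{Z}\hookrightarrow U\times_S T\to \mathbb{A}^d_T$ can fail to be a closed immersion (points lying on different sheets of $\phi$ can collide), and shrinking $U$ does not help, since an étale map cannot be made injective by Zariski shrinking. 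This is precisely the difficulty the paper flags before its proof (``for an étale morphism $U\to V$ there is no natural morphism from $\mathrm{Hilb}^n(U/S)$ to $\mathrm{Hilb}^n(V/S)$''), and it is resolved there by Rydh's Hilbert stack $\mathscr{H}^n_{X/S}$: one needs (a) that the locus $\mathrm{Hilb}^n_{U\rightarrow \mathbb{A}^d_S}$ of families whose image remains a closed immersion is \emph{open}, which comes from the 2-cartesian diagram together with the open immersion $\mathrm{Hilb}^n_{U/S}\hookrightarrow\mathscr{H}^n_{U/S}$ of \cite[proposition 1.9]{Ryd11}, and (b) that pushforward along an étale morphism is étale on these stacks, which is \cite[theorem 3.11]{Ryd11}. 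Neither follows from the completion argument. Note also that the comparison map is étale but not injective (a subscheme of $\mathbb{A}^d_T$ has in general several lifts along $\phi$), so ``identifies'' is false as stated; and the punctual locus is not open in the Hilbert space, so matching punctual loci alone cannot yield flatness of a neighborhood.

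The same issue infects your clustering step, and the algebraic-space case needs a further ingredient you do not address. For clustering, pairwise disjoint Zariski opens $V_i\ni x_i$ do not exist when $X$ is irreducible, so ``separated and recombined'' cannot be implemented with honest open subsets; the paper instead uses the étale (not open) cover $V=\bigsqcup_i V_i\to X$ and again invokes Rydh's theorem to see that $\mathrm{Hilb}^n_{V\rightarrow X}\to\mathrm{Hilb}^n_{X/S}$ is étale, after which flatness can be pulled back along it. (One could alternatively prove by a deformation-theoretic argument that the union map from the disjoint-support locus of $\prod_S \mathrm{Hilb}^{n_i}(V_i/S)$ is étale, but that argument has to be supplied; it is not formal.) Finally, when $X$ is only an algebraic space, the missing point is not ``étale descent plus representability'': given a punctual subspace $Z\subset X_s$, one must produce an étale morphism $f\colon Y\to X$ from a \emph{scheme} such that $Z$ is the isomorphic image of a closed subscheme of $Y_s$, i.e.\ the subspace must be visible on a chart. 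This is the paper's lemma \ref{lem-zerodim-subschme-etale-chart}, proved using the étale local structure theorem of Alper--Hall--Rydh and a henselization/descent argument, and without it the reduction of the algebraic-space case to the scheme case does not get off the ground.
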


The intuition is that a smooth morphism is locally trivial  in the analytic topology if we are working over $\mathbb{C}$, and there is no obstruction for a zero dimensional closed subscheme on a fiber to move in the horizontal direction in an analytically trivial fibration. One difficulty in giving an algebraic proof (even for schemes)  is that for an étale morphism $U\rightarrow V$ of schemes over $S$, there is no natural morphism from $\mathrm{Hilb}^n(U/S)$ to $\mathrm{Hilb}^n(V/S)$. To solve this we make use of the following construction of Rydh \cite{Ryd11}, which originates from M. Artin \cite[Appendix]{Art74}.

For an algebraic space $X$ over $S$, let $\mathscr{H}_{X/S}^n$ be the category whose objects are pairs of morphisms of $S$-spaces $(p,q)$ in a diagram
\[
\xymatrix{
	Z\ar[d]_p \ar[r]^{q} & X\\
	T
}
\]
where $p$ is flat and finite of rank $n$. A morphism in $\mathscr{H}_{X/S}^n$ is a pair $(\varphi,\psi)$ in the commutative diagram
\[
\xymatrix{
	Z_1\ar[d]_{p_1} \ar[r]^{\varphi} \ar@/^1pc/[rr]^{q_1} & Z_2 \ar[d]_{p_2} \ar[r]^{q_2} & X\\
	T_1 \ar[r]^{\psi} & T_2
}
\]
such that the square is cartesian. By \cite[Theorem 4.4]{Ryd11} $\mathscr{H}_{\mathscr{X}/S}^n$ is an algebraic stack over $S$.  For a morphism $f:X\rightarrow Y$ of $S$-spaces, there is an obvious natural morphism $f_*:\mathscr{H}_{X/S}^n\rightarrow \mathscr{H}_{Y/S}^n$. Moreover, let $\mathrm{Hilb}^n_{X\rightarrow Y}$ be the subfunctor of $\mathrm{Hilb}^n(X/S)$ parametrizing families 
$Z\hookrightarrow X\times_S T$ such that the composition 
$Z\hookrightarrow X\times_S T\xrightarrow{f_T} Y\times_S T $ 
is a closed immersion. Then the following commutative diagram
\[
\xymatrix{
	\mathrm{Hilb}^n_{X\rightarrow Y} \ar[r]^{f_*} \ar[d] & \mathrm{Hilb}^n(Y/S) \ar[d] \\
	\mathscr{H}_{X/S}^n \ar[r]^{f_*} & \mathscr{H}_{Y/S}^n
}
\]
 is 2-cartesian.
\begin{proof}[Proof of Theorem \ref{thm-flatness-Hilb} when $X$ is a scheme]
Let  $U$ be an open subset of $X$ such that there is an étale $S$-morphism  $f:U\rightarrow \mathbb{A}^d_S$ for a certain integer $d$. The natural morphism $\mathrm{Hilb}^n(U/S)\rightarrow \mathscr{H}_{U/S}^n$ is an open immersion (\cite[Proposition 1.9]{Ryd11}), and $\mathscr{H}_{U/S}^n \rightarrow \mathscr{H}_{\mathbb{A}^d_S/S}^n$ is étale (\cite[Theorem 3.11]{Ryd11}). But by definition $\mathscr{H}_{\mathbb{A}^d_S/S}^n\cong \mathscr{H}_{\mathbb{A}^d_\Bbbk/\Bbbk}^n\times_\Bbbk S$. It follows that $\mathrm{Hilb}^n(U/S)$ is flat over $S$.

Let $z$ be a point of $\mathrm{Hilb}^n(X/S)$. It represents a point $s\in S$ and a length $n$ closed subscheme $Z$ of $X_s$. First we consider the case that $Z$ is supported at a single closed point $x$ of $X_s$. Since $X$ is smooth over $S$, there exists an open subset $U$ containing $x$ and an étale morphism $f:U\rightarrow \mathbb{A}^d_S$ for some $d$. Then $z$ lies in the image of the open immersion $\mathrm{Hilb}^n(U/S)\rightarrow\mathrm{Hilb}^n(X/S)$, so by the first paragraph $\mathrm{Hilb}^n(X/S)$ is flat over $S$ in an open neighborhood of  $z$.

Now consider the general case and suppose that the support of $z$ is $\{x_1,\dots,x_m\}$. Take open subsets $V_1,\dots,V_m$ of $X$ such that $x_i\in V_i$, and $x_j\not\in V_i$ for $1\leq i,j\leq m$ and $i\neq j$, and each $V_i$ admits an étale $S$-morphism  $V_i\rightarrow \mathbb{A}^d_S$.  Denote by $\iota_i$ the open immersion $V_i\hookrightarrow X$. Let $V=\bigsqcup_{i=1}^{m}V_i$. The collection of the open immersions $\iota_i$ gives a morphism $\iota:V\rightarrow X$, which is étale. In the cartesian diagram
\[
\xymatrix{
	\mathrm{Hilb}^n_{V\rightarrow X} \ar[r]^{\iota_*} \ar[d] & \mathrm{Hilb}^n(X/S) \ar[d] \\
	\mathscr{H}_{V/S}^n \ar[r]^{\iota_*} & \mathscr{H}_{X/S}^n
}
\]
the bottom $\iota_*$ is étale again by \cite[Theorem 3.11]{Ryd11}, so the upper $\iota_*$ is étale. Let $Z_i=\iota_i^{-1}(Z)$, and denote the length of $Z_i$ by $n_i$, and denote the corresponding point of $\mathrm{Hilb}^{n_i}(V_i/S)$ by $z_i$. Let $Z'=\bigsqcup_{i=1}^{m}Z_i$ be a closed subscheme of $V_s$ of length $n$, and denote the corresponding point of $\mathrm{Hilb}^n(V/S)$ by $z'$.  Then $\iota_* Z'=Z$. So to show the flatness of $\mathrm{Hilb}^n(X/S)$ at $z$, it suffices to show the flatness of $\mathrm{Hilb}^n_{V\rightarrow X}$ at $z'$. Since $\mathrm{Hilb}^n_{V\rightarrow X}$ is an open subscheme of $\mathrm{Hilb}^n(V/S)$, we are in turn left to show that $\mathrm{Hilb}^n(V/S)$ is flat over $S$ at $z'$. But there is an open immersion 
\[
\prod_{i=1}^m \mathrm{Hilb}^{n_i}(V_i/S)\rightarrow \mathrm{Hilb}^n(V/S)
\]
which maps $(z_1,\dots,z_m)$ to $z'$. So the question is reduced to the flatness of $\mathrm{Hilb}^{n_i}(V_i/S)$ at $z_i$, for $1\leq i\leq m$. This is the single-point-support case that has been treated above. Hence the proof is completed. 
\end{proof}

\begin{lemma}\label{lem-zerodim-subschme-etale-chart}
Let $S$ be a scheme. Let $X$ be a quasi-separated algebraic space locally of finite presentation over $S$. Let $s$ be a point of $S$, and $Z$ be a $0$-dimensional subspace of $X_s$ (the fiber of $X$ at $s$) supported at one point $x$ of $X_s$. Then there exists an étale morphism $f:Y\rightarrow X$ where $Y$ is a scheme, and a $0$-dimensional closed subscheme $W$ of $Y_s$, such that $Z=f_* W$, i.e. the composition $W\hookrightarrow Y_s\rightarrow X_s$ is a closed immersion and isomorphic to $Z\hookrightarrow X_s$.
\end{lemma}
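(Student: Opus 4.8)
The plan is to produce the chart $Y$ by applying the étale-local structure theorem for unramified morphisms to the inclusion of $Z$ itself, the point being that this inclusion is \emph{residually trivial}; once a suitable chart is found, everything else is formal.

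First I would record the shape of $Z$. Since $X$ is quasi-separated, so is $X_s$, and a quasi-separated $0$-dimensional algebraic space of finite type over a field, supported at a single point, is $\Spec$ of an Artinian local ring; thus $Z=\Spec A$ with $A$ Artinian local. The inclusion $Z\hookrightarrow X_s$ is an immersion, in particular a monomorphism, so it induces an isomorphism on residue fields at the unique point $z$ of $Z$: writing $k=\kappa(x)$, one has $A/\mathfrak{m}_A\cong k$. I would then reformulate the goal: it suffices to find an étale morphism $f\colon Y\to X$ from a scheme together with a point $y\in Y_s$ over $x$ with $\kappa(y)=k$.

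Granting such a chart, the rest is routine. Set $Z_Y:=Z\times_X Y$; since $Z\subseteq X_s$ one has $Z_Y\subseteq Y\times_X X_s=Y_s$, and $Z_Y\hookrightarrow Y_s$ is a locally closed immersion, being the base change of the immersion $Z\hookrightarrow X$. The projection $Z_Y\to Z$ is étale, and $Z_Y$ carries a point $w$ lying over both $z$ and $y$, with $\kappa(w)=k\otimes_k k=k$. Because $A$ is Henselian, the connected component $W$ of $Z_Y$ through $w$ is finite étale over $Z=\Spec A$ with trivial residue extension, hence $W\to Z$ is an isomorphism. Shrinking $Y$ to a Zariski-open neighbourhood of $y$ I may assume $W$ is closed in $Y_s$. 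Finally, the two maps $Z_Y\to X$ (through $Y$ and through $Z$) coincide, so $W\hookrightarrow Y_s\xrightarrow{f_s}X_s$ agrees with $W\xrightarrow{\sim}Z\hookrightarrow X_s$; this is precisely the assertion $Z=f_*W$.

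The heart of the matter, and the step I expect to be the real obstacle, is producing the chart above, i.e.\ realizing $x$ in an étale chart of $X$ with its own residue field $k$. This cannot be done by refining an arbitrary étale chart $U\to X$: over $x$ such a chart only exhibits points whose residue fields are finite separable extensions $L\supsetneq k$ in general, and further étale base change can only enlarge them, so no component of $Z\times_X U$ is isomorphic to $Z$. The correct device is to apply the étale-local structure theorem for unramified morphisms (cf.~EGA~IV, \S18.4, in its extension to algebraic-space targets) to the residually trivial immersion $Z\hookrightarrow X$ — equivalently to the monomorphism $\Spec k\to X$ — whose source already carries the field $k$; this manufactures the required ``quotient-type'' chart in which $k$ is realized, rather than a mere refinement. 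Supplying this structure theorem over an algebraic-space target, using the quasi-separatedness (decency) of $X$ and descent from a scheme presentation, together with a spreading-out argument to pass from the residue field $k$ of the possibly non-closed point $s$ to an honest étale neighbourhood of $X$, is the technical content one must establish.
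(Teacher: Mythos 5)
Your overall architecture matches the paper's: both proofs reduce the lemma to the existence of an étale morphism $f\colon (Y,y)\to (X,x)$ with $Y$ an affine scheme and $\kappa(y)=\kappa(x)$, and then deduce the lemma from that chart. Your deduction from the chart is correct and is genuinely different from (and arguably cleaner than) the paper's: where the paper deletes the extra preimages of $x$, compares henselizations and completions of $Y_s$ and $Y_s\times_{X_s}Y_s$ at $(u,u)$, and invokes étale descent of closed subschemes supported at $x$, you form $Z_Y=Z\times_X Y$, use that $A$ is Artinian local (hence henselian) and that $w$ has trivial residue extension to split off an open-and-closed copy $W\cong Z$ of $Z$ inside $Z_Y$, and push it forward. (Since $Z\hookrightarrow X_s$ is a closed immersion, $Z_Y$ is already closed in $Y_s$, so your shrinking step is not even needed.) I would accept this half as a valid alternative to the paper's descent argument.

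The genuine gap is in the other half, exactly where you locate "the heart of the matter" --- but your proposed device does not supply it. The paper obtains the residually trivial chart by citing [AHD19, Theorem 19.1]; you instead invoke "EGA IV \S 18.4 in its extension to algebraic-space targets," to be established by "descent from a scheme presentation." That route is circular. EGA IV 18.4 concerns scheme targets; the extension you need --- that an unramified morphism to an algebraic space $X$ factors locally as a closed immersion into a \emph{scheme} étale over $X$ --- is essentially equivalent to the chart-existence statement itself (your own second step passes between the two). And it cannot be deduced by pulling back along a presentation $U\to X$: applying the scheme version over $U$ only produces closed immersions of open pieces of $Z\times_X U$ into étale $U$-schemes, and the residue fields appearing there are the $\kappa(u_i)$ of the points $u_i\in U$ over $x$, which may all strictly contain $k$ --- which is precisely the difficulty you yourself identified two sentences earlier. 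The real content (this is what AHD19 provides, and what makes quasi-separatedness/decency essential) is that near any point a quasi-separated algebraic space is étale-locally the quotient of an affine scheme by a \emph{finite} étale equivalence relation; such a quotient is again an affine scheme, and its distinguished point realizes $\kappa(x)$. Producing that finite relation (shrinking $R=U\times_X U$ until it is finite over $U$) is a genuine argument, not formal descent. So as written your proof rests on an unproven, mischaracterized assertion; citing AHD19 as the paper does, or supplying the finite-relation argument, is what would complete it.
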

\begin{proof}By \cite[Theorem 19.1]{AHD19}, there exists an affine scheme $U$, a point $u$ of $U$, and an étale morphism $f:(U,u)\rightarrow (X,x)$, which induces an isomorphism of residue fields. We delete the pre-images of $x$ in $U$ other than $u$, and denote the resulted scheme by $Y$, and the morphism $g:Y\rightarrow X$. Then $Y_s\times_{X_s} Y_s\rightrightarrows Y_s$ is a presentation of an open subspace of $X_s$ containing $x$, and 
$g_s^{-1}(x)$ consists of a unique point $u$. So $\mathrm{pr}_1^{-1}(u)=\mathrm{pr}_1^{-1}(u)=\{(u,u)\}$, and the induced homomorphisms of residue fields
\[
\kappa(u)\xrightarrow{\mathrm{pr}_1^{*}}\kappa((u,u))\xleftarrow{\mathrm{pr}_2^{*}} \kappa(u)
\]
 are isomorphisms. Consequently, they induce isomorphisms of the henselizations
 \[
 \mathcal{O}_{Y_s,u}^{h}\xrightarrow{\mathrm{pr}_1^{*}} 
 \mathcal{O}^h_{Y_s\times_{X_s} Y_s,(u,u)}\xleftarrow{\mathrm{pr}_2^{*}}
 \mathcal{O}_{Y_s,u}^{h},
 \]
 and thus isomorphisms of the completions
 \[
 \widehat{\mathcal{O}}_{Y_s,u}\xrightarrow{\mathrm{pr}_1^{*}} 
 \widehat{\mathcal{O}}_{Y_s\times_{X_s} Y_s,(u,u)}\xleftarrow{\mathrm{pr}_2^{*}}
 \widehat{\mathcal{O}}_{Y_s,u}.
 \]
 By descent theory, a closed subspace of $X_s$ supported at $x$ corresponds to subschemes $Z_i$ of $Y_s$ supported at $u$, for $i=1,2$, satisfying 
$\mathrm{pr}_1^{-1}(Z_1)=\mathrm{pr}_2^{-1}(Z_2)$. But a subscheme of $Y_s$ supported at $u$ is a closed subscheme of $\Spec(\widehat{O}_{Y_s,u})$ of finite length. So the above isomorphism of formal completions yields that a subspace $Z$ of $X_s$ supported at $x$ uniquely corresponds to a subscheme $W$ of $Y_s$ supported at $u$, and $f_* W=Z$.
\end{proof}

\begin{proof}[Proof of Theorem \ref{thm-flatness-Hilb} when $X$ is an algebraic space]
Let $z$ be a point of $\mathrm{Hilb}^n(X/S)$. It represents a point $s\in S$ and a length $n$ closed subspace $Z$ of $X_s$. First suppose that $Z$ is supported at a single closed point $x$ of $X_s$. By Lemma \ref{lem-zerodim-subschme-etale-chart}, there is an étale morphism $f:U\rightarrow X$ where $U$ is a scheme, such that in the following cartesian diagram
\[
\xymatrix{
	\mathrm{Hilb}^n_{U\rightarrow X} \ar[r]^{f_*} \ar[d] & \mathrm{Hilb}^n(X/S) \ar[d] \\
	\mathscr{H}_{U/S}^n \ar[r]^{f_*} & \mathscr{H}_{X/S}^n
}
\]
$z$ lies the image of the upper $f_*$. The argument of the proof for the scheme case, and that $\mathrm{Hilb}^n(U/S)$ is smooth over $S$ as we have shown,  yields that $\mathrm{Hilb}^n(X/S)$ is flat near $z$.

Now suppose that the support of $z$ is $\{x_1,\dots,x_m\}$. Denote the subspace of $Z$ supported at $x_i$ by $Z_i$ and the corresponding point by $z_i \in \mathrm{Hilb}^{n_i}(X/S)$. By Lemma \ref{lem-zerodim-subschme-etale-chart} there exist schemes $V_1,\dots,V_m$ and étale morphisms $f_i:V_i\rightarrow X$, and $w_i\in \mathrm{Hilb}^{n_i}(V_i/S)$ such that $f_{i*}(w_i)=z_i$. Then the argument of the last paragraph of the scheme case completes the proof.
\end{proof}

\subsection{Flatness of good degenerations}
In this subsection, let $C$ be a smooth affine curve over a field $\Bbbk$, $\mathfrak{X}$ be a smooth algebraic space over $\Bbbk$, and $\pi:\mathfrak{X}\rightarrow C$ be a bipartite simple  degeneration. Let $n$ be a natural number.

\begin{lemma}\label{lemma-fiber-smoothness}
Let  $\mathbf{x}$ be a point of $\mathfrak{X}[n]$. Consider the coordinates $(x,y,u_1,\dots,u_{n},v_{1},\dots,v_n,t_1,\dots,t_{n+1})$ in a local model near $\mathbf{x}$.
\begin{enumerate}
	\item[(i)] The fiber $\pi^{-1}\pi(\mathbf{x})$ is singular at $x$ iff $x=u_1=0$ at $\mathbf{x}$, or $y=v_n=0$ at $\mathbf{x}$, or there exists $k$, $2\leq k\leq n$, such that $u_k=v_{k-1}=0$;
	\item[(ii)] The fiber $\pi^{-1}\pi(\mathbf{x})$ is smooth at $\mathbf{x}$ iff $x\neq 0$ at $\mathbf{x}$, or $y\neq 0$ at $\mathbf{x}$, or there exists $k$, $1\leq k\leq n$, such that $u_k\neq 0$ and $v_k\neq 0$ at $\mathbf{x}$.
\end{enumerate}
\end{lemma}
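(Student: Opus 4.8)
The statement is étale-local on $X[n]$, so I would work throughout in the local model of Proposition~\ref{prop-simple-degeneration-local-model}, where $X[n]$ is the closed subscheme of $X\times\mathbb{A}^{n+1}\times(\mathbb{P}^1)^n$ cut out by the equations~(\ref{eq-local-equations}), and is smooth (as in lemma~\ref{lemma-local-smoothness}). Writing $c_i=t_i(\mathbf{x})$, the fibre $\pi[n]^{-1}(\pi[n](\mathbf{x}))$ through $\mathbf{x}$ is cut out in the smooth space $X[n]$ by the $n+1$ functions $t_1-c_1,\dots,t_{n+1}-c_{n+1}$. The chart computation below will exhibit these as a regular sequence cutting out a subscheme of codimension $n+1$, so by the Jacobian criterion the fibre is smooth at $\mathbf{x}$ precisely when the covectors $dt_1,\dots,dt_{n+1}$ are linearly independent at $\mathbf{x}$, i.e. when $\pi[n]$ is submersive at $\mathbf{x}$. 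The whole problem is thus to compute the critical locus of $\pi[n]$.

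To do this I would pass to the $2^n$ standard affine charts of $(\mathbb{P}^1)^n$, normalising each $u_i$ or $v_i$ to $1$; for the point $\mathbf{x}$ one uses the chart in which $u_i=1$ whenever $v_i(\mathbf{x})=0$ and $v_i=1$ otherwise. Solving~(\ref{eq-local-equations}) in such a chart produces an explicit system of smooth coordinates on $X[n]$ and shows that each $t_i$ is either one of these coordinates, or a unit, or a product of two distinct coordinates --- the product case occurring exactly at the ``descents'' where the chart passes from a $u$-normalised factor to a $v$-normalised one, the two boundary products being $u_1x$ (from the first equation) and $v_ny$ (from the last). Consequently the fibre is locally the product of a smooth factor with the binomial hypersurfaces $\{u_jv_{j-1}=c_j\}$ (respectively $\{u_1x=c_1\}$ and $\{v_ny=c_{n+1}\}$); the variable pairs entering distinct binomials are disjoint, because two descents cannot occur at consecutive indices. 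Hence the fibre is singular at $\mathbf{x}$ iff one of these binomials is singular there, i.e. iff both its factors vanish at $\mathbf{x}$. Reading this off yields exactly the three alternatives $x=u_1=0$, or $y=v_n=0$, or $u_k=v_{k-1}=0$ for some $2\le k\le n$, which is~(i).

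Statement~(ii) is the set-theoretic complement of~(i); to rewrite the complement in the stated disjunctive form I would extract from~(\ref{eq-local-equations}) the propagation rules, each obtained by substituting the relevant vanishing into a single equation and using that $[u_i:v_i]$ are homogeneous coordinates (so $u_i$ and $v_i$ never vanish simultaneously): $u_{i-1}=0\Rightarrow u_i=0$ and $v_i=0\Rightarrow v_{i-1}=0$ for $2\le i\le n$, together with the boundary rules $v_1=0\Rightarrow x=0$ and $u_n=0\Rightarrow y=0$. These show that if $\mathbf{x}$ fails every condition of~(ii) --- that is, $x=0$, $y=0$, and each $\mathbb{P}^1$-factor sits at $0$ or $\infty$ --- then the factors at $0$ form an ``up-set'' and those at $\infty$ a ``down-set'', forcing a single transition, which is precisely one of the three conditions of~(i); conversely each condition of~(i) propagates to this configuration. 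Thus (i) and (ii) are genuinely complementary, as they must be.

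The main obstacle I anticipate is the uniform bookkeeping over the $2^n$ charts: verifying the coordinate/unit/product trichotomy for the $t_i$ and the disjointness of the binomial variable-pairs in every chart, and then matching the chart-dependent vanishing loci to the chart-independent conditions of the statement. The reconciliation of~(i) with~(ii) rests entirely on the propagation rules, whose boundary instances involving $x$ and $y$ must be handled separately from the interior ones.
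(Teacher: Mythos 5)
The paper states this lemma without proof (the nearest written computation is the proof of lemma \ref{lemma-local-smoothness}, which solves the equations (\ref{eq-local-equations}) on the loci $\{u_k\neq 0,\ v_k\neq 0\}$), so your direct chart-by-chart analysis of the local model of proposition \ref{prop-simple-degeneration-local-model} is exactly the intended kind of argument. Most of it is correct: the propagation rules $u_{i-1}=0\Rightarrow u_i=0$, $v_i=0\Rightarrow v_{i-1}=0$, $v_1=0\Rightarrow x=0$, $u_n=0\Rightarrow y=0$ are right, and your up-set/down-set argument showing that (i) and (ii) are set-theoretic complements of each other is correct, as is the identification of the product relations with descents and the two boundary equations.

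There is, however, one step that fails as stated: the assertion that in a general chart ``the fibre is locally the product of a smooth factor with the binomial hypersurfaces'' in disjoint variable pairs. This is false in any chart containing an \emph{ascent}, i.e.\ a passage from a $v$-normalised factor to a $u$-normalised one: there the equation $u_iv_{i-1}=v_iu_{i-1}t_i$ reads $v_iu_{i-1}t_i=1$, which forces $v_i$ and $u_{i-1}$ to be units \emph{and} imposes on the fibre the nontrivial constraint $u_{i-1}v_i=c_i^{-1}$, linking variables that sit in different binomials. Concretely, for $n=2$ in the chart $v_1=u_2=1$ the fibre over $(c_1,c_2,c_3)$ is $\{u_1x=c_1,\ u_1v_2=c_2^{-1},\ v_2y=c_3\}$, which is not a product of the binomials $\{u_1x=c_1\}$ and $\{v_2y=c_3\}$ with a smooth factor, so your ``hence'' does not apply (the fibre there happens to be smooth, but not for the reason you give). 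The repair is already contained in your own propagation rules, and you should make it explicit: since $v_i(\mathbf{x})=0\Rightarrow v_{i-1}(\mathbf{x})=0$, the set of indices at which you normalise $u_i=1$ is an initial segment $\{1,\dots,m\}$, so your adapted chart is always of the form $u\cdots u\,v\cdots v$. Such a chart contains no ascent and exactly \emph{one} product relation --- $t_1=u_1x$ if $m=0$, $t_{m+1}=u_{m+1}v_m$ if $1\le m\le n-1$, $t_{n+1}=v_ny$ if $m=n$ --- with all remaining $t_i$ independent coordinates. In that chart the fibre really is one binomial hypersurface times an affine space, hence singular at $\mathbf{x}$ exactly when both factors vanish; and, again by adaptedness, the alternative of (i) corresponding to that single binomial is the only one that can hold at a point of this chart, which also disposes of the chart-versus-statement ``matching'' bookkeeping you flag at the end. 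With this observation inserted your proof is complete; without it, the product claim on which the singularity criterion rests is incorrect in general charts.
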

\begin{proof}
This follows directly from the defining equations (\ref{eq-local-equations}) of the local model.
\end{proof}

\begin{lemma}\label{lemma-local-smoothness}
Let $\mathbf{x}$ be a  point of $\mathfrak{X}[n]$. Suppose that the fiber $\pi^{-1}\pi(\mathbf{x})$ is smooth at $\mathbf{x}$. Then $\pi[n]$ is smooth in an open neighborhood of $\mathbf{x}$. 
\end{lemma}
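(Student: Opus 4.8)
The plan is to reduce to the local model of Proposition \ref{prop-simple-degeneration-local-model} and apply the relative Jacobian criterion for smoothness. Since smoothness of $\pi[n]$ at $\mathbf{x}$ and openness of the smooth locus are étale-local questions, I may assume $X=\Spec\ \Bbbk[x,y,z,\dots]$ and that $X[n]$ is the closed subscheme of the $C[n]$-scheme $A=X\times\mathbb{A}^{n+1}\times(\mathbb{P}^1)^n$ cut out by the $n+1$ equations $G_1=u_1x-v_1t_1$, $G_j=u_jv_{j-1}-v_ju_{j-1}t_j$ for $2\le j\le n$, and $G_{n+1}=v_ny-u_nt_{n+1}$ of (\ref{eq-local-equations}), where $A\to C[n]$ is the projection to the $t$-coordinates and is smooth of relative dimension $\dim X+n$. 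By the relative Jacobian criterion, $\pi[n]$ is smooth at $\mathbf{x}$ of relative dimension $\dim X-1$ if and only if the relative differentials $dG_1,\dots,dG_{n+1}$ are linearly independent in $\Omega_{A/C[n]}\otimes\kappa(\mathbf{x})$; since $\Omega_{A/C[n]}$ restricts on a fibre to the sheaf of differentials of that fibre, this is exactly the condition that the scheme-theoretic fibre of $\pi[n]$ through $\mathbf{x}$ be smooth of dimension $\dim X-1$ at $\mathbf{x}$.

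First I would compute these differentials in a chart of $(\mathbb{P}^1)^n$, in which on each factor one of $u_i,v_i$ is set to $1$. Writing the free fibre coordinates as $x,w_1,\dots,w_n,y$ (with $w_i$ the remaining coordinate on the $i$-th $\mathbb{P}^1$, the coordinates $z,\dots$ of $X$ not appearing), one checks that $dG_1$ involves only $x,w_1$, that $dG_j$ involves only $w_{j-1},w_j$ for $2\le j\le n$, and that $dG_{n+1}$ involves only $w_n,y$. Hence the relative Jacobian is an $(n+1)\times(n+2)$ bidiagonal matrix along the chain $x-w_1-\cdots-w_n-y$: row $G_j$ has a single entry $\alpha_j$ in the column of position $j-1$ and a single entry $\beta_j$ in the column of position $j$. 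Such a matrix has maximal rank $n+1$ precisely when there is an index $c$ with $\alpha_1,\dots,\alpha_c$ and $\beta_{c+1},\dots,\beta_{n+1}$ all non-zero, equivalently when the largest index with $\beta=0$ is strictly smaller than the smallest index with $\alpha=0$.

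The heart of the argument is then to identify, chart by chart and using both the equations (\ref{eq-local-equations}) and the constraint that $u_i,v_i$ never vanish simultaneously, the exact locus where this rank drops. I expect the outcome to be that the rank is less than $n+1$ precisely on the three loci $\{x=u_1=0\}$, $\{y=v_n=0\}$ and $\{u_k=v_{k-1}=0\}$ ($2\le k\le n$) of Lemma \ref{lemma-fiber-smoothness}(i): at each such point the corresponding differential $dG_\bullet$ is zero, and conversely the vanishings forced by the equations (a factor $t_j=0$ forces $u_jv_{j-1}=0$, hence $u_j=0$ in the relevant chart, and so on) never push the largest $\beta$-zero index past the smallest $\alpha$-zero index except at these loci. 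Granting this, the hypothesis that $\pi^{-1}\pi(x)$ is smooth at $x$ is, by Lemma \ref{lemma-fiber-smoothness}(ii), exactly the statement that $\mathbf{x}$ avoids all three loci, so the relative Jacobian has full rank at $\mathbf{x}$, whence $\pi[n]$ is smooth there and, by openness, on a neighbourhood. As a by-product the same chart computation realizes $X[n]$ étale-locally as an open subscheme of affine space, which is the regularity assertion invoked in Construction \ref{cons-specialization-0dim-subscheme-1}.

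The main obstacle I anticipate is precisely this last matching: the bidiagonal entries $\alpha_j,\beta_j$ carry the parameters $t_j$, so that a priori their vanishing could be caused by $t_j=0$ rather than by the vanishing of a homogeneous coordinate, and one must verify across all $2^n$ charts that each such vanishing is either absorbed harmlessly or signals one of the three node loci. An alternative, more structural route that sidesteps this bookkeeping is to first prove that $\pi[n]$ is flat: $X[n]$ is regular and irreducible of dimension $\dim X+n$, $C[n]$ is regular, and every fibre of $\pi[n]$ is equidimensional of dimension $\dim X-1$, so miracle flatness applies; one then invokes the fibrewise criterion for smoothness (e.g.\ \cite{EGA-IV}) together with Lemma \ref{lemma-fiber-smoothness}. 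This route is cleaner, but it presupposes the regularity of $X[n]$, which is exactly what the direct chart computation is meant to supply.
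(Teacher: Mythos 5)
Your reduction to the local model and the framing via the relative Jacobian criterion are sound, and the equivalence you assert between fibre smoothness at $\mathbf{x}$ and linear independence of $dG_1,\dots,dG_{n+1}$ does hold (the forward direction because the fibre ideal is generated by exactly $n+1$ elements, equal to the codimension, so at a smooth point of the fibre their differentials must be independent). But there is a genuine gap exactly where you write ``I expect the outcome to be\dots'' and ``Granting this\dots'': the chart-by-chart identification of the rank-drop locus with the three node loci of Lemma \ref{lemma-fiber-smoothness} is the entire content of the lemma, and it is never carried out. The complication you flag is real: in the chart $v_{k-1}=u_k=1$, for instance, the equation $G_k$ becomes $1-v_ku_{k-1}t_k$, whose relative differential is divisible by $t_k$ and vanishes wherever $t_k=0$; to dispose of this one must observe that $G_k=0$ forces $t_k$, $u_{k-1}$, $v_k$ to be units on that chart, so the apparent degeneration never occurs on $X[n]$ there. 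Without an analysis of this kind, uniform over all $2^n$ charts, what you have is a plan rather than a proof, and your fallback route (miracle flatness plus the fibrewise criterion) is, as you yourself note, circular in this setting because it presupposes the regularity of $X[n]$ that the missing computation was meant to supply.

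For contrast, the paper's proof avoids the rank bookkeeping entirely: by Lemma \ref{lemma-fiber-smoothness}, at $\mathbf{x}$ either $x\neq 0$, or $y\neq 0$, or some pair $u_k,v_k$ are both nonzero; the equations (\ref{eq-local-equations}) then propagate nonvanishing (e.g.\ $u_kv_{k-1}=v_ku_{k-1}t_k$ with $u_k\neq 0$ forces $u_{k-1}\neq 0$, since $u_{k-1}=v_{k-1}=0$ is impossible on $\mathbb{P}^1$), so that on the open set $U_k$ where $u_1,\dots,u_k,v_k,\dots,v_n$ are all invertible the equations can be solved explicitly for $x$, $y$ and all ratios $v_i/u_i$ ($i<k$), $u_i/v_i$ ($i>k$) as monomials in $t_1,\dots,t_{n+1}$ and $u_k/v_k$. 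This exhibits $\pi[n]|_{U_k}$ as the projection from a product $\mathbb{G}_m\times\mathbb{A}^{\dim X-1}\times C[n]$ to $C[n]$, which is smooth with no Jacobian computation at all, and it simultaneously delivers the local regularity of $X[n]$ invoked in Construction \ref{cons-specialization-0dim-subscheme-1}. If you wish to keep your Jacobian route, this same propagation-of-nonvanishing observation is precisely the tool that tames the problematic charts and would turn your outline into a complete argument.
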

\begin{proof}
Each fiber of $\mathfrak{X}[n]\rightarrow C[n]$ is a simple normal crossing scheme. Let $\mathbf{x}$ be a point such that the fiber $\pi^{-1}\pi(\mathbf{x})$ is smooth at $\mathbf{x}$. Consider a local chart of the form (\ref{eq-local-chart}) containing $\mathbf{x}$.  Then by Lemma \ref{lemma-fiber-smoothness} at least one of the following three cases happens:
\begin{enumerate}
 	\item[(i)]$x\neq 0$ at $\mathbf{x}$;
 	\item[(ii)] $y\neq$ at $\mathbf{x}$;
 	\item[(iii)] there exists $k$, $1\leq k\leq n$, such that neither of $u_{k}$ and $v_{k}$ vanishes at $\mathbf{x}$.
 \end{enumerate}   
 Assume case (iii). Then the equations (\ref{eq-local-equations}) imply that none of   $u_1,\dots,u_{k}$ and $v_{k},\dots,v_n$ vanishes at $\mathbf{x}$. Consider the open subset $U_k$ defined by the nonvanishing of these coordinates. On $U_k$ the system of equations (\ref{eq-local-equations}) are solved as
\begin{eqnarray*}
&&x=t_1\cdots t_{k}\cdot \frac{v_{k}}{u_{k}},\\
&& \frac{v_i}{u_i}=t_{i+1}\cdots t_{k}\cdot \frac{v_{k}}{u_{k}},\ 1\leq i\leq k-1,\\
&& \frac{u_i}{v_i}=\frac{u_{k}}{v_{k}}\cdot t_{k+1}\cdots t_{i},\ k+1\leq i\leq n,\\
&&y=\frac{u_{k}}{v_{k}}\cdot t_{k+1}\cdots t_{n+1}.
\end{eqnarray*}
Then the restriction of $\pi[n]$ to $U_k$ is isomorphic to the projection $\mathbb{G}_m\times C[n]\xrightarrow{\mathrm{pr}_2}C[n]$, $(\frac{u_k}{v_k},t_1,\dots,t_{n+1})\mapsto (t_1,\dots,t_{n+1})$, so the conclusion is true in this case. The cases (i) and (ii) are similar.
\end{proof}

\begin{proposition}\label{prop-flatness-good-degeneration}
The Deligne-Mumford stack $\mathcal{I}^n_{\mathfrak{X}/C}$ is flat over $C$.
\end{proposition}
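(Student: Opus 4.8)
The plan is to reduce the flatness of the stack $\mathcal{I}^n_{X/C}$ over $C$ to the flatness of its natural atlas, and then to factor that morphism through $C[n]$ so that each factor is handled by results already established. Since the quotient morphism $\mathbf{H}^n_{\st}(X/C)\to \mathcal{I}^n_{X/C}=[\mathbf{H}^n_{\st}(X/C)/G[n]]$ is a $G[n]$-torsor, hence smooth and surjective, the morphism $\mathcal{I}^n_{X/C}\to C$ is flat if and only if the composite $\mathbf{H}^n_{\st}(X/C)\to C$ is flat. Thus I would first reduce to proving that $\mathbf{H}^n_{\st}(X/C)$ is flat over $C$.

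Next I would show that $\mathbf{H}^n_{\st}(X/C)$ is flat over $C[n]$. By condition (i) of proposition \ref{thm-stable-locus}, every stable point $[Z]$ is supported on the smooth locus of the fibers of $\pi[n]$. By lemma \ref{lemma-local-smoothness}, this smooth locus is contained in the open subspace $X[n]_{sm}\subset X[n]$ over which $\pi[n]$ is smooth. As $Z$ is finite with support in $X[n]_{sm}$, we have $Z\subset X[n]_{sm}$, so $\mathbf{H}^n_{\st}(X/C)$ is an open subspace of $\mathrm{Hilb}^n(X[n]_{sm}/C[n])$. Since $C[n]$ is a scheme of finite type over $\Bbbk$ and $X[n]_{sm}\to C[n]$ is a smooth morphism of separated algebraic spaces, theorem \ref{thm-flatness-Hilb} applies and shows that $\mathrm{Hilb}^n(X[n]_{sm}/C[n])$ is flat over $C[n]$; hence so is its open subspace $\mathbf{H}^n_{\st}(X/C)$.

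It then remains to check that $C[n]\to C$ is flat. Recall that $C[n]=C\times_{\mathbb{A}^1}\mathbb{A}^{n+1}$, where $\mathbb{A}^{n+1}\to\mathbb{A}^1$ is the multiplication $\mu$ and $C\to\mathbb{A}^1$ is the étale morphism $t$, so that $C[n]\to C$ is the base change of $\mu$ along $t$. It therefore suffices that $\mu$ be flat. But $\mu$ corresponds to the inclusion $\Bbbk[s]\hookrightarrow\Bbbk[t_1,\dots,t_{n+1}]$, $s\mapsto t_1\cdots t_{n+1}$, of a principal ideal domain into an integral domain that is torsion-free over it, so $\mu$ is flat. (Equivalently, $\mathbb{A}^{n+1}$ is Cohen--Macaulay, $\mathbb{A}^1$ is regular, and every fiber of $\mu$ has dimension $n$, so miracle flatness applies.)

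Finally I would compose the two flat morphisms $\mathbf{H}^n_{\st}(X/C)\to C[n]$ and $C[n]\to C$ to conclude that $\mathbf{H}^n_{\st}(X/C)\to C$ is flat, whence $\mathcal{I}^n_{X/C}$ is flat over $C$ by the reduction of the first paragraph. I expect the only delicate point to be the second step: the morphism $\pi[n]$ is genuinely non-smooth along the intersection loci of the fibers, so the stability condition (via lemma \ref{lemma-local-smoothness}) is essential in order to confine the parametrized subschemes to the smooth locus $X[n]_{sm}$ and thereby make theorem \ref{thm-flatness-Hilb} applicable.
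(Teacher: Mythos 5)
Your proposal is correct and follows essentially the same route as the paper: stability confines the parametrized subschemes to the smooth locus of $\pi[n]$, lemma \ref{lemma-local-smoothness} and theorem \ref{thm-flatness-Hilb} then give flatness of $\mathbf{H}^n_{\st}(X/C)$ over $C[n]$, and flatness over $C$ follows by composing with the flat morphism $C[n]\rightarrow C$ and descending along the smooth surjective atlas map (the paper phrases this last step as descent through the intermediate quotient $[C[n]/G[n]]$, which is the same argument). Your explicit verification that $C[n]\rightarrow C$ is flat is a point the paper simply asserts.
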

\begin{proof} By definition, $\mathcal{I}^n_{\mathfrak{X}/C}=[\mathbf{H}^n_{\st}/G[n]]$, and $\mathbf{H}^n_{\st}(\mathfrak{X}/C)$ is contained in $\mathbf{H}^n_{sm}(\mathfrak{X}/C)$. By Lemma \ref{lemma-local-smoothness} and Theorem \ref{thm-flatness-Hilb}, $\mathbf{H}^n_{sm}(\mathfrak{X}/C)$ is flat over $C[n]$,  thus  $\mathbf{H}^n_{\st}(\mathfrak{X}/C)$ is also flat over $C[n]$. In the commutative diagram
\[
\xymatrix{
	\mathbf{H}^n_{\st} \ar[r]^>>>>>>>>>>{q_1} \ar[d]_{p_2} & C[n] \ar[d]_{p_1} \ar[dr]^{p_4} &  \\
	[\mathbf{H}^n_{\st}/G[n]] \ar[r]^>>>>>{q_2} & [C[n]/G[n]] \ar[r]^>>>>>{p_3} & C
}
\]
both $p_1$ and $p_2$ are faithfully flat. 
We have shown the flatness of $q_1$. So $q_2$ is flat. Since  $p_4$ is flat, $p_3$ is also flat. Hence the composition $p=p_3\circ q_2$ is flat.
\end{proof}

\section{Decomposition of the central fiber of a good degeneration}\label{sec:decompositionCentralFiber}

The goal of this section is Proposition \ref{prop-central-fiber-relativeHilbertscheme}, which provides a \emph{scheme theoretical} decomposition of the central fiber of $\mathcal{I}^n_{\mathfrak{X}/C}$.
\begin{definition}
Let $(Y,D)$ be a pair of smooth algebraic spaces over $\Bbbk$, such that $D$ is a divisor of $Y$.  Regarding $D$ as a divisor of the fiber over $0$ of the trivial family $Y
\times \mathbb{P}^1$, we set $\mathfrak{S}_{Y,D}=\mathrm{Bl}_{D}(Y\times \mathbb{P}^1)$, the blow-up of $Y\times \mathbb{P}^1$ along $D$. Then $\mathfrak{S}_{Y,D}$ is  a bipartite simple  degeneration of $Y$, whose central fiber is $Y\cup_{D} \mathbb{P}(N_{D/Y}\oplus\mathcal{O}_D)$. We call $\mathfrak{S}_{Y,D}$ the \emph{standard degeneration} associated with $(Y,D)$.
\end{definition}

\begin{definition}\label{def-expanded-degeneration-right-pair}
Let $(Y,D)$ be a pair of smooth proper algebraic spaces over $\Bbbk$, such that $D$ is a divisor of $Y$. Let $\mathbb{G}_m^n$ act on $\mathbb{A}^n=\Spec\ \Bbbk[t_1,\dots,t_n]$ as $(\sigma_1,\dots,\sigma_n).(t_1,\dots,t_n)=(\sigma_1 t_1,\dots,\sigma_n t_n)$. For $n\geq 0$,  we define an algebraic space $(D|Y)^{[n]}$ with a $\mathbb{G}_m^n$-action, together with an equivariant birational morphism $\rho_{D|Y}^{[n]}:(D|Y)^{[n]}\rightarrow Y\times \mathbb{A}^n$, recursively as follows.
\begin{enumerate}
	\item[(i)] $(D|Y)^{[0]}=Y$;
	\item[(ii)] Define $(D|Y)^{[1]}$ as the blow-up of $Y\times \mathbb{A}^1$ along $D\times V(t_1)$. The $\mathbb{G}_m$ action on $(D|Y)^{[1]}$ is the  action induced by its action on $\mathbb{A}^1$ and the blowing-up, whose center is $\mathbb{G}_m$-invariant; 
	\item[(iii)] Suppose that $\rho_{D|Y}^{[n-1]}:(D|Y)^{[n-1]}\rightarrow Y\times \mathbb{A}^{n-1}$ has been constructed. Consider the fiber product $(D|Y)^{[n-1]}\times_{t_{n-1},\mathbb{A}^{1},m}\mathbb{A}^2$, where $t_{n-1}$ in the subscript means the composition 
	\[
	(D|Y)^{[n-1]}\xrightarrow{\rho_{D|Y}^{[n-1]}}Y\times \mathbb{A}^{n-1}\xrightarrow{\mathrm{pr}_2} \mathbb{A}^{n-1}\xrightarrow{\mathrm{pr}_{n-1}}\mathbb{A}^1,
	\]
	and $m$ is the multiplication $\mathbb{A}^2\rightarrow \mathbb{A}^1$. There is an obvious morphism 
	\[
	 (D|Y)^{[n-1]}\times_{t_{n-1},\mathbb{A}^{1},m}\mathbb{A}^2\rightarrow Y\times \mathbb{A}^n,
	 \] 
	 \[
	 (z)\times (t,t')\mapsto \big(\mathrm{pr}_1\circ \rho_{D|Y}^{[n-1]}(z)\big)\times \big(\mathrm{pr}_{\{1,2,\dots,n-2\}}\circ \mathrm{pr}_2\circ \rho_{D|Y}^{[n-1]}(z),t,t'\big),
	 \]
	 where  $z\in (D|Y)^{[n-1]}$, and
	 \[
	 \mathrm{pr}_{\{1,2,\dots,n-2\}}: \mathbb{A}^{n-1}\rightarrow \mathbb{A}^{n-2}
	 \]
	 is the projection to the first $n-2$ factors. 
	 Define $(D|Y)^{[n]}$ as the blow-up of $(D|Y)^{[n-1]}\times_{t_{n-1},\mathbb{A}^{1},m}\mathbb{A}^2$ along
	 the strict transform of $D\times V(t_n)$. The $\mathbb{G}_m^n$-action on $(D|Y)^{[n]}$ is  induced by its action on $\mathbb{A}^n$.
\end{enumerate}
\end{definition}

\begin{definition}\label{def-expanded-degeneration-left-pair}
Let $(Y,D)$ be a pair of smooth proper algebraic spaces over $\Bbbk$, such that $D$ is a divisor of $Y$. We define $(Y|D)^{[n]}$, as an algebraic space, equal to $(D|Y)^{[n]}$, but with a $\mathbb{G}_m^n$-action induced by the action $\mathbb{G}_m^n\times \mathbb{A}^n\rightarrow \mathbb{A}^n$ given as $(\sigma_1,\dots,\sigma_n).(t_1,\dots,t_n)=(\sigma_n t_1,\dots,\sigma_1 t_n)$.
\end{definition}

\begin{remark}\label{rmk-relative-Hilb-space-local-model}
An explicit definition of $(D|Y)^{[n]}$ via the local model is as follows. Let $\Spec\ \Bbbk[y,\dots]$ be a local model of $Y$ such that $D$ is defined by $y=0$.
We define $(D|Y)^{[n]}$ in $n$ steps:
\begin{enumerate}
	\item[(1)] Blow up $\Spec\ \Bbbk[x,\dots]\times \mathbb{A}^n$ along the subscheme  $y=t_n=0$. The resulting scheme $U_1$ is the close subscheme of 
	\[
	\Spec\ \Bbbk[y,\dots] \times \mathbb{A}^n\times \mathbb{P}^1=\Spec\ \Bbbk[y,\dots] \times \Spec\ \Bbbk[t_1,\dots,t_n]\times \mathrm{Proj}\ \Bbbk[u_n,v_n]
	\]
	defined by 
	\[
	v_n y=u_n t_{n}.
	\]
	\item[(2)] Blow up $U_1$ along the  subscheme $u_n=t_{n-1}=0$. Since $v_n\neq 0$ in an open neighborhood of this subscheme, and in this neighborhood $u_n/v_n$ is a coordinate, the resulting scheme $U_2$ is the close subscheme of 
	\[
	\Spec\ \Bbbk[y,\dots] \times \mathbb{A}^n\times \mathbb{P}^2=
	\Spec\ \Bbbk[y,\dots] \times \Spec\ \Bbbk[t_1,\dots,t_n]\times \mathrm{Proj}\ \Bbbk[u_{n-1},v_{n-1}]\times \mathrm{Proj}\ \Bbbk[u_n,v_n]
	\]
	defined by 
	\[
	\begin{cases}
	u_n v_{n-1}=v_{n}u_{n-1}t_{n-1},\\
	v_n y=u_n t_{n}.
	\end{cases}
	\]
\end{enumerate}
Repeating this process, in the $i$-th step we blow up the subscheme $u_{n+2-i}=t_{n+1-i}=0$. Finally we obtain a closed subscheme $U_n$ of 
	\[
	 	\Spec\ \Bbbk[y,\dots] \times \mathbb{A}^{n}\times \mathbb{P}^{n}
	 \] 
	 defined by 
	 \[
	 \begin{cases}
	 u_{i+1} v_{i}=v_{i+1} u_{i} t_{i},& 1\leq i\leq n-1,\\
	 v_n y=u_n t_n.& 
	 \end{cases}
	 \]
This in fact well defines a scheme $(D|Y)^{[n]}$ whose local model is $U_n$, for in each step the center of the blow-up is independent of the choice of the local coordinates of $(D,Y)$ and the defining equation of $D$. One can check that the two definitions coincide, by explicitly write down the local equations in  Definition \ref{def-expanded-degeneration-right-pair}. 
\pqed
\end{remark}

\begin{definition}\label{def-expanded-degeneration-middle-pair}
Let $D$ be a smooth proper algebraic space over $\Bbbk$, $N$ a line bundle on $D$. Let $\mathbb{P}_{D,N}=\mathbb{P}(N\oplus \mathcal{O}_D)$. We identify the $0$-section of $N$ with a section of $\mathbb{P}_{D,N}$, and denote it by $D_0$. Then $(\mathbb{P}_{D,N}, D_0)$ is a pair of smooth algebraic spaces over $\Bbbk$ and $D_0$ is a divisor of $\mathbb{P}_{D,N}$. 
 We define $(D,N)^{[n]}$ to be  $(D_0|\mathbb{P}_{D,N})^{[n-1]}$.  There is a $\mathbb{G}_m^{n-1}$-action on $(D_0|\mathbb{P}_{D,N})^{[n-1]}$. Moreover the $\mathbb{G}_m$-action on $N$, $\sigma.v=\sigma v$, induces a $\mathbb{G}_m$-action on $\mathbb{P}_{D,N}$, and thus on $(D,N)^{[n]}$. We let $\mathbb{G}_m^n=\mathbb{G}_m\times \mathbb{G}_m^{n-1}$ act on $(D,N)^{[n]}$ via the product of the two actions.
\end{definition}

Let $\mathfrak{X}$ be a smooth algebraic space over $\Bbbk$, and $\pi:\mathfrak{X}\rightarrow C$ be a bipartite simple  degeneration. In the following part of this section we denote $\mathbf{H}_{\st}^n=\mathbf{H}_{\st}^n(\mathfrak{X}/C)$.

Denote by $Y_1$ and $Y_2$  the two components of $\pi^{-1}(0)$ as in Definition \ref{def-simpleDeg-bipartite}, and $D=Y_1\cap Y_2$. Let $I=\{a_1,\dots,a_r\}\subset [n+1]=\{1,\dots,n+1\}$, where $r\geq 1$. Following \cite[\S 1.3]{GHH19}, we define  $\mathfrak{X}[n]_I$ to be the subspace of $\mathfrak{X}[n]$ defined by $\{t_{a_1}=\dots=t_{a_r}=0\}$ in the local model. 
Set 
\begin{eqnarray*}
W_{I,0}&=&(Y_1|D)^{[a_1-1]}\times \Spec\ \Bbbk[\{t_j\}_{j\in \{a_1+1,\dots,n+1\}\setminus I}],\\
W_{I,i}&=&(D,N_{D/Y_1})^{[a_{i+1}-a_{i}]}\times \Spec\ \Bbbk[\{t_j\}_{j\in \{1,\dots,a_{i}-1, a_{i+1}+1,\dots,n+1\}\setminus I}],\ 1\leq i\leq r-1,\\
W_{I,r}&=&(D|Y_2)^{[n+1-a_r]}\times \Spec\ \Bbbk[\{t_j\}_{j\in \{1,\dots,a_r-1\}\setminus I}].
\end{eqnarray*}
Recall the action (\ref{eq-action-1}) of $\mathbb{G}_m^n\cong G[n+1]$ on $\mathbb{A}^{n+1}$. There is an induced action of $\mathbb{G}_m^{n-r}$ on 
\[
\mathbb{A}^{n+1-r}=\Spec\ \Bbbk[\{t_j\}_{j\in \{1,\dots,n+1\}\setminus I}]
\]
via regarding $\mathbb{G}_m^{n-r}$ as the subtorus $\{\sigma_{a_1}=\dots=\sigma_{a_r}=0\}$ of $G[n]$, and restricting the action (\ref{eq-action-1}) to it.
For each $i$, $0\leq i\leq r$, there is an obvious induced $\mathbb{G}_m^{n-r}$-action on $W_{I,i}$.
\begin{proposition}\label{prop-expanded-degeneration-central-fiber-components}
The space $\mathfrak{X}[n]_I$ has a decomposition
\begin{equation}\label{eq-expanded-degeneration-central-fiber-components}
	\mathfrak{X}[n]_I=W_{I,0}
	\cup\bigcup_{i=1}^{r-1}W_{I,i}\cup W_{I,r},
\end{equation}
satisfying that the intersection of any two components lies in the singular locus of the fibers of $\pi[n]:\mathfrak{X}[n]\rightarrow \mathbb{A}^1[n]$. Moreover, the decomposition is $G[n]$-equivariant.
\end{proposition}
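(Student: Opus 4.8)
The plan is to verify the three assertions in the étale-local model of proposition \ref{prop-simple-degeneration-local-model}. Both $X[n]_I$ and the pieces $W_{I,i}$ are defined through such charts, and the decomposition, the location of the pairwise intersections, and the $G[n]$-equivariance are all étale-local statements; so I would take $X=\Spec\ \Bbbk[x,y,z,\dots]$ with $xy=t$ and realise $X[n]_I$ as the closed subscheme of $X\times\mathbb{A}^{n+1}\times(\mathbb{P}^1)^n$ cut out by the chain equations (\ref{eq-local-equations}) together with $t_{a_1}=\cdots=t_{a_r}=0$.

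The first step is to observe that imposing $t_{a_j}=0$ degenerates the linking equation indexed by $a_j$ into a product of two factors: it becomes $u_1x=0$ when $a_1=1$, it becomes $u_{a_j}v_{a_j-1}=0$ when $1<a_j\le n$, and it becomes $v_ny=0$ when $a_r=n+1$. Thus the chain (\ref{eq-local-equations}) decouples at the $r$ cut positions into $r+1$ blocks, the $i$-th block involving only the variables carried by the index interval between two consecutive cuts, the two extreme blocks abutting the $Y_1$- and $Y_2$-ends. Adopting the convention $a_0=1$, $a_{r+1}=n+1$, block $i$ comes with $a_{i+1}-a_i$ expansion levels, so the tuple of levels is exactly $\mathbf{v}_{\mathbf{a}}$.

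The heart of the argument is to identify block $i$ with the local model of the corresponding expanded-degeneration space. Using remark \ref{rmk-relative-Hilb-space-local-model} together with definitions \ref{def-expanded-degeneration-right-pair}, \ref{def-expanded-degeneration-left-pair} and \ref{def-expanded-degeneration-middle-pair}, I would extract the closed-form local equations of $(Y_1|D)^{[a_1-1]}$, of $(D,N_{D/Y_1})^{[a_{i+1}-a_i]}$, and of $(D|Y_2)^{[n+1-a_r]}$, and match them variable-by-variable to block $i$ of (\ref{eq-local-equations}) after the cut substitutions; the coordinates $t_j$ with $j$ outside the $i$-th interval and $j\notin I$ then survive untouched as free parameters, producing the factor $\Spec\ \Bbbk[\{t_j\}]$ recorded in each $W_{I,i}$. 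To finish the decomposition (\ref{eq-expanded-degeneration-central-fiber-components}) I would then check that these $r+1$ loci exhaust $X[n]_I$: every point satisfies, at each cut, the vanishing of one factor, and the closures of the open strata on which exactly the expected factors vanish are precisely the $W_{I,i}$, so the degenerate incidence loci on which two factors of a single cut-equation vanish simultaneously are absorbed into these closures.

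For the last two assertions: two adjacent pieces $W_{I,i}$ and $W_{I,i+1}$ meet exactly where both factors at the cut $a_{i+1}$ vanish, i.e. where $u_{a_{i+1}}=v_{a_{i+1}-1}=0$ (or $x=u_1=0$, resp. $y=v_n=0$, at the two ends), which by lemma \ref{lemma-fiber-smoothness} is exactly the singular locus of the fibers of $\pi[n]$, and non-adjacent pieces meet only inside such loci; and $G[n]$-equivariance holds because the action of proposition \ref{prop-simple-degeneration-local-model}(ii) is multiplicative in each of $x,y,t_i,u_i,v_i$ and hence preserves every vanishing locus that defines the $W_{I,i}$, while the cut conditions collapse $G[n]$ to the subtorus $\mathbb{G}_m^{n-r}$ acting on each piece as described before the proposition. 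I expect the main obstacle to be the block-by-block identification: the spaces $(D|Y)^{[n]}$ and $(D,N)^{[n]}$ are built recursively, so putting their local equations in closed form and aligning all indices---in particular the off-by-one shift coming from the base $\mathbb{P}(N_{D/Y_1}\oplus\mathcal{O}_D)$ that enters each middle block---is where the genuine care is required.
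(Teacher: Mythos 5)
Your proposal is correct and takes essentially the same route as the paper: the paper's entire proof is the one-line statement that the conclusion ``follows from direct comparison of the local models in proposition~\ref{prop-simple-degeneration-local-model} and remark~\ref{rmk-relative-Hilb-space-local-model}'', which is exactly the block-by-block matching you carry out. Your elaborations---decoupling the chain equations at the cut indices, invoking lemma~\ref{lemma-fiber-smoothness} to place the pairwise intersections in the fiberwise singular locus, and reading off equivariance from the explicit action in proposition~\ref{prop-simple-degeneration-local-model}(ii)---are precisely the details the paper leaves implicit.
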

\begin{proof}
The conclusion follows from a direct comparison of the local models in Proposition \ref{prop-doublepoint-degeneration-local-model} and Remark \ref{rmk-relative-Hilb-space-local-model}.
\end{proof}

\begin{definition}
\begin{enumerate}
	\item[(i)]Denote by $\pi_{D|Y}^{[n]}$ the composition 
$(D|Y)^{[n]}\xrightarrow{\rho_{D|Y}^{[n]}} Y\times \mathbb{A}^n\xrightarrow{\mathrm{pr}_2}\mathbb{A}^n$. Denote by $\mathrm{Hilb}^n((D|Y)^{[n]}/\mathbb{A}^n)$ the Hilbert space of $n$ points on the fibers of $\pi_{D|Y}^{[n]}$. We denote the open subspace of $\mathrm{Hilb}^n((D|Y)^{[n]}/\mathbb{A}^n)$ which parametrizes the objects satisfying the conditions (i) and (ii) in Theorem \ref{thm-stable-locus} by $\mathbf{H}^n_{\st}(D|Y)$. There is an induced action of $\mathbb{G}_m^{n}$ on $\mathbf{H}^n_{\st}(D|Y)$. Similarly we define $\mathbf{H}^n_{\st}(Y|D)$  with a $\mathbb{G}_m^n$-action. 
	\item[(ii)] We define $\mathbf{H}^n_{\st}(D,N):=\mathbf{H}^n_{\st}(D_0|\mathbb{P}_{D,N})$, with an induced $\mathbb{G}_m^n$-action.
	\item[(iii)] For $I\subset [n+1]$, denote by $(\mathbf{H}^n_{\st})_I$  the closed subspace of $\mathbf{H}^n_{\st}$ parametrizing the zero-dimensional subschemes lying in $\mathfrak{X}[n]_I$.
\end{enumerate}
\end{definition}

\begin{corollary}\label{cor-factors-GIT-hilbert-stack}
There is a $G[n]$-equivariant isomorphism
\begin{equation}
	(\mathbf{H}^n_{\st})_I\cong \mathbf{H}^{a_1-1}_{\st}(Y_1|D)\times \prod_{i=1}^{r-1}\mathbf{H}^{a_{i+1}-a_i}_{\st}(D,N_{D/Y_1})\times \mathbf{H}^{n+1-a_r}_{\st}(D|Y_2),
\end{equation}
and an isomorphism
\begin{multline}\label{eq-factors-GIT-hilbert-stack}
	\big[(\mathbf{H}^n_{\st})_I/G[n]\big]\cong \big[\mathbf{H}^{a_1-1}_{\st}(Y_1|D)/G[a_1-1]\big]\\
	\times 
	\prod_{i=1}^{r-1}\big[\mathbf{H}^{a_{i+1}-a_i}_{\st}(D,N_{D/Y_1})/G[a_{i+1}-a_i]\big]\times \big[\mathbf{H}^{n+1-a_r}_{\st}(D|Y_2)/G[n+1-a_r]\big]
\end{multline}
of algebraic stacks over $\Bbbk$.
\end{corollary}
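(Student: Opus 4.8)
The plan is to derive the first isomorphism from the decomposition of $X[n]_I$ in Proposition~\ref{prop-expanded-degeneration-central-fiber-components} together with the stability condition~(i). Since the pairwise intersections of the components $W_{I,0},\dots,W_{I,r}$ lie in the singular locus of the fibers of $\pi[n]$, the smooth locus of each fiber of $X[n]_I$ over the stratum $\{t_{a_1}=\dots=t_{a_r}=0\}$ is the disjoint union of the smooth loci of the $W_{I,i}$. A point of $(\mathbf{H}^n_{\st})_I$ is a stable length-$n$ subspace $Z$, which by condition~(i) is supported on this smooth locus; hence $Z$ splits canonically as $Z=\bigsqcup_{i=0}^{r}Z_i$ with $Z_i$ supported on the smooth part of $W_{I,i}$, and condition~(ii) forces $\mathrm{length}(Z_i)$ to be the $i$-th entry $a_1-1$, $a_{i+1}-a_i$, or $n+1-a_r$ of $\mathbf{v}_{\mathbf{a}}$. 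First I would check that this splitting works in families: the smooth locus is open, disjointness of supports is an open condition, and the degrees are locally constant, so over an arbitrary base $T$ a flat finite family of rank $n$ breaks up into flat finite subfamilies of the prescribed ranks on the components.

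Next I would match each $Z_i$ with a point of the corresponding factor. The free coordinates $\{t_j\}_{j\notin I}$ partition into blocks by the elements of $I$, and the $j$ in the $i$-th block are exactly the base of the expanded piece inside $W_{I,i}$, the remaining $t_j$ being the bases of the other factors; thus the product of the factor bases is canonically $\Spec\ \Bbbk[\{t_j\}_{j\notin I}]$. Pulling the $i$-th expanded degeneration back along the projection of $T$ to its block, the subfamily $Z_i$ becomes a flat finite subfamily of that expanded piece over $T$, i.e. a $T$-point of $\mathbf{H}^{a_1-1}_{\st}(Y_1|D)$, $\mathbf{H}^{a_{i+1}-a_i}_{\st}(D,N_{D/Y_1})$, or $\mathbf{H}^{n+1-a_r}_{\st}(D|Y_2)$. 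This exhibits the splitting as a natural transformation; the inverse glues a tuple of subfamilies, supported on the disjoint smooth loci, into a single subspace of $X[n]_I\times_{\mathrm{base}}T$ of total rank $\sum_i\mathrm{length}(Z_i)=n$. That the two constructions are mutually inverse is a direct comparison of the local models of Proposition~\ref{prop-simple-degeneration-local-model} and Remark~\ref{rmk-relative-Hilb-space-local-model}, as in Proposition~\ref{prop-expanded-degeneration-central-fiber-components}.

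For equivariance I would reparametrize $G[n]$ by $\rho_i=\sigma_1\cdots\sigma_i$ for $1\le i\le n$, under which $t_j\mapsto(\rho_j/\rho_{j-1})t_j$ and the $i$-th factor $[u_i:v_i]$ transforms by $\rho_i$, so that $(\rho_1,\dots,\rho_n)$ is a free coordinate on $G[n]\cong\mathbb{G}_m^n$. Partitioning these coordinates into the consecutive blocks $\{\rho_{a_i},\dots,\rho_{a_{i+1}-1}\}$ (with $a_0=1$, $a_{r+1}=n+1$) of sizes $a_1-1$, $a_{i+1}-a_i$, $n+1-a_r$ yields a group isomorphism $G[n]\cong G[a_1-1]\times\prod_{i=1}^{r-1}G[a_{i+1}-a_i]\times G[n+1-a_r]$; the boundary coordinate $\rho_{a_i}$ is exactly the extra $\mathbb{G}_m$ of Definition~\ref{def-expanded-degeneration-middle-pair} scaling the bubble $\mathbb{P}(N_{D/Y_1}\oplus\mathcal{O}_D)$, while the interior $\rho$'s scale the bubbling parameters. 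The conventions of Definitions~\ref{def-expanded-degeneration-right-pair}--\ref{def-expanded-degeneration-middle-pair} --- the reversal for the left pair and the extra factor for the middle pair --- are what make the $G[n]$-action on $X[n]_I$ restrict to the product of the factor actions under this identification; verifying this is once more a comparison of local models, and with the $G[n]$-equivariance of the decomposition it promotes the first isomorphism to a $G[n]$-equivariant one.

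The second isomorphism is then formal: for a $G[n]$-equivariant isomorphism of algebraic spaces and a compatible group isomorphism $G[n]\cong\prod_i G[m_i]$ (with the $m_i$ the ranks $a_1-1$, $a_{i+1}-a_i$, $n+1-a_r$), one has $\big[\prod_i\mathbf{H}^{m_i}_{\st}\big/\prod_iG[m_i]\big]\cong\prod_i\big[\mathbf{H}^{m_i}_{\st}/G[m_i]\big]$. I expect the main obstacle to be the bookkeeping for the middle factors: the component $W_{I,i}$ carries the bubble $(D,N_{D/Y_1})^{[a_{i+1}-a_i]}=(D_0|\mathbb{P})^{[a_{i+1}-a_i-1]}$, which has one fewer bubbling level than the index $a_{i+1}-a_i$ suggests, the missing level being furnished by the boundary torus coordinate $\rho_{a_i}$; keeping this grading shift and the normal-bundle scaling $\mathbb{G}_m$ consistent across the three types of factors is the delicate point, and is exactly where the precise conventions of the earlier definitions are needed.
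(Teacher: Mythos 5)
Your proposal is correct and takes essentially the same approach as the paper: the paper's (much terser) proof likewise combines the $G[n]$-equivariant decomposition of proposition \ref{prop-expanded-degeneration-central-fiber-components} with the fact that stable subspaces are supported in the smooth locus of the fibers of $\pi[n]$ (definition \ref{def-good-degeneration-Hilbert-scheme-points}), and then takes quotients by $G[n]$. The family-level splitting, the block decomposition of the torus, and the bookkeeping for the middle factors that you spell out are precisely the details the paper leaves implicit.
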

\begin{proof}
By Definition \ref{def-good-degeneration-Hilbert-scheme-points}, the objects parametrized by $\mathbf{H}^n_{\st}$ lies in the smooth loci of the fibers of $\pi[n]$. So conclusion  follows from \ref{prop-expanded-degeneration-central-fiber-components}, and by taking the  quotients by $G[n]$.
\end{proof}


\begin{corollary}
\begin{enumerate}
	\item[(i)] Let $(Y,D)$ be a pair of smooth proper algebraic spaces over $\Bbbk$, such that $D$ is a divisor of $Y$. Then for $n\geq 0$, $\big[\mathbf{H}^{n}_{\st}(Y|D)/G[n]\big]$ is a proper Deligne-Mumford stack over $\Bbbk$. If $Y$ is a projective scheme, $\big[\mathbf{H}^{n}_{\st}(Y|D)/G[n]\big]$ has a projective coarse moduli scheme;
	\item[(ii)] Let $D$ be a smooth proper algebraic space over $\Bbbk$, $N$ a line bundle on $D$. Then for $n\geq 0$, $\big[\mathbf{H}^{n}_{\st}(D,N)/G[n]\big]$ is a proper Deligne-Mumford stack over $\Bbbk$. If $D$ is a projective scheme,  $\big[\mathbf{H}^{n}_{\st}(D,N)/G[n]\big]$ has a projective coarse moduli scheme.
\end{enumerate}
\end{corollary}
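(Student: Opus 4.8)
The plan is to realize each of these quotient stacks as a \emph{closed} substack of the central fibre of a good degeneration to which Theorems \ref{thm-properness-X-projective} and \ref{thm-properness-X-algebraic-space} already apply, and then to read off its identity from the stratification of Corollary \ref{cor-factors-GIT-hilbert-stack}. The point is that closedness is what lets properness descend from the whole central fibre to the single relevant factor, so that no separate valuative-criterion argument is needed.

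For part (i), I would take $C=\mathbb{A}^1$ and form the standard degeneration $X=\mathcal{S}_{Y,D}=\mathrm{Bl}_D(Y\times\mathbb{P}^1)$, a simple degeneration of $Y$ whose central fibre is $Y\cup_D\mathbb{P}(N_{D/Y}\oplus\mathcal{O}_D)$; the strict transform gives $Y_1\cong Y$ and the exceptional divisor gives $Y_2\cong\mathbb{P}_{D,N_{D/Y}}$, with $N_{D/Y_1}=N_{D/Y}$. Since $\mathcal{S}_{Y,D}$ is smooth, Theorem \ref{thm-properness-X-algebraic-space} shows $\mathcal{I}^n_{\mathcal{S}_{Y,D}/C}$ is a proper Deligne--Mumford stack over $C$, so its fibre over $0$ is a proper Deligne--Mumford stack over $\Bbbk$. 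The locus $X[n]_{\{n+1\}}=\{t_{n+1}=0\}$ is closed and lies over $0\in C$, so $\big[(\mathbf{H}^n_{\st})_{\{n+1\}}/G[n]\big]$ is a closed substack of this fibre. Taking $I=\{n+1\}$ (so $r=1$, $a_1=n+1$) in Corollary \ref{cor-factors-GIT-hilbert-stack}, every factor except $\mathbf{H}^n_{\st}(Y_1|D)$ collapses, since $\mathbf{H}^0_{\st}(D|Y_2)$ is a point; hence this closed substack is isomorphic to $\big[\mathbf{H}^n_{\st}(Y|D)/G[n]\big]$. A closed substack of a proper Deligne--Mumford stack is again one, which gives the first assertion. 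When $Y$ is projective, $\mathcal{S}_{Y,D}$ is projective, so by Theorem \ref{thm-properness-X-projective} the stack $\mathcal{I}^n_{\mathcal{S}_{Y,D}/C}$ has a projective coarse moduli scheme; the coarse space of the closed substack is then a closed subscheme of the projective fibre over $0$, hence projective.

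For part (ii) I would run the identical argument for the pair $(\mathbb{P}_{D,N},D_0)$, which is again a smooth (projective, when $D$ is) pair with $D_0$ a smooth divisor. The central fibre of $\mathcal{S}_{\mathbb{P}_{D,N},D_0}$ is $\mathbb{P}_{D,N}\cup_{D_0}\mathbb{P}(N_{D_0/\mathbb{P}_{D,N}}\oplus\mathcal{O}_{D_0})$, and the only extra input is the identification $N_{D_0/\mathbb{P}_{D,N}}\cong N$ (the normal bundle of the zero section of $N$ inside its projective completion is $N$), so that $Y_1=Y_2=\mathbb{P}_{D,N}$ and $N_{D_0/Y_1}=N$. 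Now taking $I=\{1\}$ (so $a_1=1$, $r=1$), Corollary \ref{cor-factors-GIT-hilbert-stack} collapses all factors except $\mathbf{H}^n_{\st}(D_0|Y_2)=\mathbf{H}^n_{\st}(D_0|\mathbb{P}_{D,N})=\mathbf{H}^n_{\st}(D,N)$, exhibiting $\big[\mathbf{H}^n_{\st}(D,N)/G[n]\big]$ as the closed substack $\big[(\mathbf{H}^n_{\st})_{\{1\}}/G[n]\big]$ of the central fibre; properness and, in the projective case, projectivity of the coarse space follow exactly as before.

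The only genuinely delicate points are bookkeeping rather than conceptual. First, one must check that $X[n]_{\{n+1\}}$ (resp. $X[n]_{\{1\}}$) is honestly closed and lies entirely over $0$, so that the relevant factor inherits properness rather than being merely locally closed; this is precisely what allows Corollary \ref{cor-factors-GIT-hilbert-stack} to transfer properness from the whole central fibre to the single factor, and I expect it to be the main thing to get right. Second, one needs the normal-bundle computation $N_{D_0/\mathbb{P}_{D,N}}\cong N$ in part (ii), which is what lets us recognize $Y_2$ as $\mathbb{P}_{D,N}$ and so match the relative factor with $\mathbf{H}^n_{\st}(D,N)$.
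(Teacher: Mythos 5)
Your proposal is correct and follows essentially the same route as the paper: apply Corollary \ref{cor-factors-GIT-hilbert-stack} to the standard degenerations $\mathcal{S}_{Y,D}$ and $\mathcal{S}_{\mathbb{P}(N\oplus\mathcal{O}_D),D_0}$, identify $\big[\mathbf{H}^n_{\st}(Y|D)/G[n]\big]$ (resp. $\big[\mathbf{H}^n_{\st}(D,N)/G[n]\big]$) with a closed substack of the central fibre $(\mathcal{I}^n_{\mathcal{S}/C})_0$, and invoke Theorems \ref{thm-properness-X-algebraic-space} and \ref{thm-properness-X-projective}. Your explicit choices of stratum ($I=\{n+1\}$, resp. $I=\{1\}$) and the identification $N_{D_0/\mathbb{P}_{D,N}}\cong N$ just make precise the bookkeeping the paper leaves implicit.
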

\begin{proof}
Apply Corollary \ref{cor-factors-GIT-hilbert-stack} to the standard degeneration $\mathfrak{S}_{Y,D}$ associated with the pair $(Y,D)$. Then
$\big[(\mathbf{H}^n_{\st})_I/G[n]\big]$ is a closed substack of $(\mathcal{I}^n_{\mathfrak{S}_{Y,D}/C})_0$,
the fiber of $(\mathcal{I}^n_{\mathfrak{S}_{Y,D}/C})$ over $0\in C$. By Theorem 
\ref{thm-properness-X-algebraic-space} $(\mathcal{I}^n_{\mathfrak{S}_{Y,D}/C})_0$ is a proper Deligne-Mumford stack over $\Bbbk$. So from the factorization 
(\ref{eq-factors-GIT-hilbert-stack}) we obtain the first statement of (i). The second statement follows from Theorem \ref{thm-properness-X-projective}.

Similarly, applying Corollary \ref{cor-factors-GIT-hilbert-stack}, Theorem \ref{thm-properness-X-algebraic-space} and Theorem \ref{thm-properness-X-projective} to the standard degeneration associated with the pair $(\mathbb{P}(N\oplus \mathcal{O}_D),D)$ we obtain (ii).
\end{proof}

\begin{definition}
We define the \emph{relative Hilbert spaces of $n$ points} associated with $(Y,D)$ (resp. associated with $(D,N)$) to be the Deligne-Mumford stacks
\begin{enumerate}
	\item[(i)] $\mathrm{Hilb}^n(Y,D):=\big[\mathbf{H}^{n}_{\st}(Y|D)/G[n]\big]$;
	\item[(ii)] resp.  $\mathrm{Hilb}^n(D,N):=\big[\mathbf{H}^{n}_{\st}(D,N)/G[n]\big]$.
\end{enumerate}
\end{definition}

\begin{proposition}\label{prop-central-fiber-relativeHilbertscheme}
Let $\pi:\mathfrak{X}\rightarrow C$ be a bipartite simple  degeneration. Let $(\mathcal{I}^n_{\mathfrak{X}/C})_0$ be the central fiber of $\mathcal{I}^n_{\mathfrak{X}/C}$ over $0\in C$. For any subset $I=\{a_1,\dots,a_r\}\subset[n+1]$, let $(\mathcal{I}^n_{\mathfrak{X}/C})_I$ be the closed substack of $(\mathcal{I}^n_{\mathfrak{X}/C})_0$ defined by $t_{a_1}=\dots=t_{a_r}=0$.   Then:
\begin{enumerate}
 	\item[(i)] $(\mathcal{I}^n_{\mathfrak{X}/C})_0$ is a scheme theoretic union of the closed substacks
	\begin{equation}\label{eq-centralfiber-components}
		(\mathcal{I}^{n}_{\mathfrak{X}/C})_{\{i\}},\ 1\leq i\leq n+1;
	\end{equation}
	\item[(ii)] for  $I=\{a_1,\dots,a_r\}\subset[n+1]$, 
	\begin{equation}\label{eq-intersection-strata}
		(\mathcal{I}^n_{\mathfrak{X}/C})_{I}
		\cong (\mathcal{I}^n_{\mathfrak{X}/C})_{\{a_1\}}\times_{(\mathcal{I}^n_{\mathfrak{X}/C})_0}\cdots\times_{(\mathcal{I}^n_{\mathfrak{X}/C})_0}(\mathcal{I}^n_{\mathfrak{X}/C})_{{\{a_r\}}}
	\end{equation}
	In other words, $(\mathcal{I}^n_{\mathfrak{X}/C})_{I}$ is equal to the \emph{scheme theoretic intersection} of the components in (\ref{eq-centralfiber-components}) indexed by $i\in I$.
	\item[(iii)] for  $I=\{a_1,\dots,a_r\}\subset[n+1]$, there is an isomorphism of stacks
	\begin{equation}\label{eq-centralfiber-strata-decomposition}
		(\mathcal{I}^n_{\mathfrak{X}/C})_{I}\cong \mathrm{Hilb}^{a_1-1}(Y_1,D)\times
	\prod_{i=1}^{r-1}\mathrm{Hilb}^{a_{i+1}-a_i}(D,N_{Y_1/D})\times
	 \mathrm{Hilb}^{n+1-a_{r}}(Y_2,D).
	\end{equation}

 \end{enumerate} 

\end{proposition}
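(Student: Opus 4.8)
The plan is to reduce everything to the coordinate geometry of $C[n]=C\times_{\mathbb{A}^1}\mathbb{A}^{n+1}$ and to the product decomposition already supplied by corollary \ref{cor-factors-GIT-hilbert-stack}, transporting statements from $\mathbb{A}^{n+1}$ up to $\mathbf{H}^n_{\st}$ and then down to the quotient stack $\mathcal{I}^n_{X/C}=[\mathbf{H}^n_{\st}/G[n]]$. The basic observation is that since $C\to\mathbb{A}^1$ is étale with $t^{-1}(0)=0$, the projection $C[n]\to\mathbb{A}^{n+1}$ is étale and carries a local coordinate $t$ at $0\in C$ to the product $t_1\cdots t_{n+1}$. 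Hence on $\mathbf{H}^n_{\st}$ the scheme-theoretic fiber over $0\in C$ is cut out by $t_1\cdots t_{n+1}=0$, while $(\mathbf{H}^n_{\st})_{\{i\}}$ and $(\mathbf{H}^n_{\st})_I$ are cut out by $t_i=0$ and by $t_{a_1}=\cdots=t_{a_r}=0$ respectively; each of these closed subschemes is $G[n]$-invariant and descends to the corresponding substack of $\mathcal{I}^n_{X/C}$.

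I would prove (iii) first, as it is the most direct. By definition $(\mathcal{I}^n_{X/C})_I=[(\mathbf{H}^n_{\st})_I/G[n]]$, so the isomorphism (\ref{eq-factors-GIT-hilbert-stack}) of corollary \ref{cor-factors-GIT-hilbert-stack} already gives a product decomposition; it only remains to identify the three types of factors with the relative Hilbert stacks. The first factor $[\mathbf{H}^{a_1-1}_{\st}(Y_1|D)/G[a_1-1]]$ and the middle factors $[\mathbf{H}^{a_{i+1}-a_i}_{\st}(D,N_{D/Y_1})/G[a_{i+1}-a_i]]$ match the definitions of $\mathrm{Hilb}^{a_1-1}(Y_1,D)$ and $\mathrm{Hilb}^{a_{i+1}-a_i}(D,N_{D/Y_1})$ verbatim. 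For the last factor I must match $[\mathbf{H}^{n+1-a_r}_{\st}(D|Y_2)/G[n+1-a_r]]$ with $\mathrm{Hilb}^{n+1-a_r}(Y_2,D)=[\mathbf{H}^{n+1-a_r}_{\st}(Y_2|D)/G[n+1-a_r]]$; by definition \ref{def-expanded-degeneration-left-pair} the spaces $(D|Y_2)^{[m]}$ and $(Y_2|D)^{[m]}$ are equal and their $\mathbb{G}_m^m$-actions differ only by the order-reversing automorphism of $\mathbb{G}_m^m$, which induces an isomorphism of the two quotient stacks and thus of the associated stable loci. This settles (iii).

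For (i) and (ii) the content is the scheme-theoretic shape of the central fiber, which I would derive from two ideal identities on $\mathbb{A}^{n+1}$: $\bigcap_{i}(t_i)=(t_1\cdots t_{n+1})$, expressing the scheme-theoretic union of the coordinate hyperplanes, and $(t_{a_1})+\cdots+(t_{a_r})=(t_{a_1},\dots,t_{a_r})$, expressing their scheme-theoretic intersection. Both hold in $\Bbbk[t_1,\dots,t_{n+1}]$, pass to $C[n]$ along the étale projection, and then to $\mathbf{H}^n_{\st}$: since $\mathbf{H}^n_{\st}\to C[n]$ is flat (openness in $\mathbf{H}^n_{sm}$, which is flat over $C[n]$ by the proof of proposition \ref{prop-flatness-good-degeneration}), flat pullback preserves finite sums and finite intersections of the ideals $(t_i)$, so the identities persist. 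Finally, as the atlas $\mathbf{H}^n_{\st}\to\mathcal{I}^n_{X/C}$ is a faithfully flat $G[n]$-torsor and all the subschemes in play are $G[n]$-invariant, the identities descend to the asserted scheme-theoretic union (i) and to the fiber-product/intersection description (ii) of $(\mathcal{I}^n_{X/C})_I$.

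The genuinely delicate step, and the one I would treat most carefully, is ensuring that these relations are scheme-theoretic rather than merely set-theoretic: the underlying-space statements are trivial, but the possible non-reducedness along the strata $(\mathcal{I}^n_{X/C})_I$ is precisely what is at stake in theorem \ref{thm-intro-inc-exc-principle}, since Euler characteristics are not insensitive to it. This is exactly where the flatness of $\mathbf{H}^n_{\st}$ over $C[n]$ is indispensable, as it is what propagates the identity $\bigcap_i(t_i)=(t_1\cdots t_{n+1})$ intact; the remaining bookkeeping, including the $(D|Y_2)$ versus $(Y_2|D)$ reindexing, is routine.
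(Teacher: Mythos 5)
Your proposal is correct and follows essentially the same route as the paper: flatness of $\mathbf{H}^n_{\st}\rightarrow C[n]$ (proposition \ref{prop-flatness-good-degeneration}) to transport the coordinate-ideal identity $\bigcap_i(t_i)=(t_1\cdots t_{n+1})$ for (i)--(ii), and corollary \ref{cor-factors-GIT-hilbert-stack} for (iii); the paper phrases the transport via pulling back the regular sequence $(t_1,\dots,t_{n+1})$ rather than via flat pullback commuting with finite intersections of ideals, but this is the same mechanism. Your explicit treatment of the $(D|Y_2)^{[m]}$ versus $(Y_2|D)^{[m]}$ reindexing and of $G[n]$-equivariant descent fills in details the paper leaves implicit, and is a welcome addition.
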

\begin{proof}
In the proof of Proposition \ref{prop-flatness-good-degeneration}, we have shown that $q:\mathbf{H}^n_{\st}\rightarrow C[n] $ is flat. Since $(t_0,\dots,t_n)$ is a regular sequence of $C[n]$, $(q^*t_0,\dots,q^* t_n)$ is a regular sequence of  $\mathbf{H}^n_{\st}$ as well. Then the equality of ideals 
\[
(t_{a_1}\cdots t_{a_r})=\bigcap_{i=1}^{r}(t_{a_i})
\]
implies (i) and (ii). Finally (iii) follows from Corollary \ref{cor-factors-GIT-hilbert-stack}.
\end{proof}

\section{Base change property of higher direct images of sheaves on stacks}\label{sec:baseChange}
In this section we show the local constantness of the Euler characteristic of a flat family of sheaves on a tame Deligne-Mumford stack. We follow the argument of \cite[section 8.3]{Ill05}. 
\begin{proposition}\label{prop-base-change}
Let $S$ be scheme. 
Let  
\[
\xymatrix{
	\mathscr{X}' \ar[r]^{h} \ar[d]_{f'} & \mathscr{X} \ar[d]^{f} \\
	\mathscr{Y}' \ar[r]^{g} & \mathscr{Y}
}
\]
be a 2-cartesian diagram of morphisms of algebraic stacks over $S$. Suppose that $f$ is quasi-compact and the diagonal morphism $\Delta_f: \mathscr{X}\rightarrow \mathscr{Y}$ of $f$ is affine. Let $\mathcal{F}$ be a quasi-coherent sheaf on $\mathscr{X}$, and $\mathcal{G}$ a quasi-coherent sheaf on $\mathscr{Y}'$. Suppose that $\mathcal{F}$ and $\mathcal{G}$ are tor-independent over $\mathscr{Y}$. Then there is a natural isomorphism
\[
\mathcal{G}\otimes^L_{\mathscr{Y}} Rf_* \mathcal{F}\xrightarrow{\sim} Rf'_* (\mathcal{G}\otimes^ L_{\mathscr{Y}} \mathcal{F}).
\]
\end{proposition}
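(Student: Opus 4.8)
The plan is to write down the canonical base change morphism and then prove it is an isomorphism by peeling off the stackiness of the four spaces one at a time, until the assertion becomes the classical tor-independent base change theorem for algebraic spaces. First I would construct the natural transformation. Writing $G\otimes^L_{\mathscr{Y}}Rf_*\mathcal{F}=G\otimes^L_{\mathcal{O}_{\mathscr{Y}'}}Lg^*Rf_*\mathcal{F}$ and $G\otimes^L_{\mathscr{Y}}\mathcal{F}=Lf'^*G\otimes^L_{\mathcal{O}_{\mathscr{X}'}}Lh^*\mathcal{F}$, the map
\[
\beta\colon G\otimes^L_{\mathscr{Y}}Rf_*\mathcal{F}\longrightarrow Rf'_*\big(G\otimes^L_{\mathscr{Y}}\mathcal{F}\big)
\]
is the one adjoint, under $(Lf'^*,Rf'_*)$, to the composite
\[
Lf'^*\big(G\otimes^L_{\mathcal{O}_{\mathscr{Y}'}}Lg^*Rf_*\mathcal{F}\big)\cong Lf'^*G\otimes^L Lh^*Lf^*Rf_*\mathcal{F}\xrightarrow{\mathrm{id}\otimes Lh^*\varepsilon}Lf'^*G\otimes^L Lh^*\mathcal{F},
\]
where $\varepsilon\colon Lf^*Rf_*\mathcal{F}\to\mathcal{F}$ is the counit and I use $gf'=fh$. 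This $\beta$ is defined for any such square and any $G,\mathcal{F}$; all the content lies in showing it is a quasi-isomorphism.

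Next I would reduce the base to a scheme. Whether $\beta$ is an isomorphism in $D_{qc}(\mathscr{Y}')$ can be detected after pullback along a smooth surjection, by smooth descent for quasi-isomorphisms. Choosing a smooth atlas $Y_0\to\mathscr{Y}$ by a quasi-compact scheme and replacing the whole square by its pullback along $Y_0\to\mathscr{Y}$, I may assume $\mathscr{Y}=Y$ is a scheme; covering the resulting $\mathscr{Y}'$ by a smooth atlas $V\to\mathscr{Y}'$ and pulling back again, I may assume in addition that $\mathscr{Y}'=V$ is a scheme. The input making these reductions legitimate is precisely the flat case of the statement, in which $g$ (equivalently the atlas map) is flat so that $Lg^*=g^*$ is exact and tor-independence is automatic; I would therefore establish the flat case first and bootstrap from it.

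Now I would reduce the source. With $Y$ and $\mathscr{Y}'$ schemes, $\mathscr{X}$ is a quasi-compact algebraic stack with affine diagonal over $Y$. Choosing a smooth atlas $X_0\to\mathscr{X}$ and forming its Čech nerve $X_\bullet\to\mathscr{X}$, cohomological descent computes $Rf_*\mathcal{F}$ as the totalization of the cosimplicial object $R(f_p)_*(\mathcal{F}|_{X_p})$, with $f_p\colon X_p\to Y$. Here the hypothesis that $\Delta_f$ is affine is essential: it forces each $X_p\to X_0\times_Y\cdots\times_Y X_0$ to be affine, so every $X_p$ is a quasi-compact, quasi-separated algebraic space over $Y$. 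For such morphisms $\beta$ is, levelwise, exactly the classical tor-independent base change map for algebraic spaces, which is an isomorphism under our hypothesis; I would invoke that result termwise, noting that the same nerve $X_\bullet$ computes the right-hand side of the desired isomorphism.

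The main obstacle is the final interchange: one must commute $G\otimes^L_{\mathscr{Y}}(-)$ past the totalization (a homotopy limit) that computes $Rf_*$ through $X_\bullet$. Since $\otimes^L$ is a left adjoint it need not commute with arbitrary limits, so the termwise isomorphisms do not automatically assemble. I would control this in two ways. First, the affine-diagonal hypothesis presents $Rf_*\mathcal{F}$ by an explicit complex of quasi-coherent sheaves built from the affine pieces of $X_\bullet$, so that the totalization is a well-behaved object of $D_{qc}$ rather than an opaque limit. Second, tor-independence of $\mathcal{F}$ and $G$ over $\mathscr{Y}$ makes the relevant $\mathrm{Tor}$-sheaves between $G$ and the $R(f_p)_*\mathcal{F}$ vanish, so that the descent spectral sequence is compatible with the derived tensor product degree by degree. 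Assembling these, the levelwise isomorphisms glue to the global one. The real work is this convergence and compatibility bookkeeping in the unbounded derived category of the stack—carried out with the standard formalism—rather than any single geometric input.
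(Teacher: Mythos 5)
Your proposal is correct and follows essentially the same route as the paper: reduce by smooth descent to the case where $\mathscr{Y}$ and $\mathscr{Y}'$ are (affine) schemes, use the affine-diagonal hypothesis to produce a \v{C}ech nerve of affine pieces computing $Rf_*\mathcal{F}$ as an explicit complex of quasi-coherent modules, and then use tor-independence to pass the tensor product through that complex termwise. The paper carries out exactly this computation (citing Olsson's comparison of $Rf_*$ with the \v{C}ech complex of the hypercover), so your extra care about constructing the map by adjunction and about commuting $\otimes^L$ past the totalization is the same bookkeeping the paper resolves by working with the concrete \v{C}ech complex.
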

If $\mathscr{X}$ is a Deligne-Mumford stack over $S$ and $f$ is separated, the assumption that the diagonal $\Delta_f$ be affine is satisfied.
\begin{proof} 
The required base change property is local on smooth charts of $\mathscr{Y}'$ and $\mathscr{Y}$. Moreover, the diagonal being affine is stable under base change. So without loss of generality we assume that $\mathscr{Y}'$ and $\mathscr{Y}$ are affine schemes. Let $\mathscr{Y}=\Spec(A)$, $\mathscr{Y}'=\Spec(A')$. Since $f$ is quasi-compact, we can find $U\rightarrow \mathscr{X}$  a smooth surjective morphism where $U$ is an affine scheme. Set
\[
U_n:=\underbrace{U\times_{\mathscr{X}}U\times_{\mathscr{X}}\cdots \times_{\mathscr{X}}U}_{n+1}.
\] 
By our assumption $\Delta_f$ is affine, thus from the cartesian diagram
\[
\xymatrix{
	U_{n+1}=U_n\times_{\mathscr{X}}U \ar[d] \ar[r] & U_n\times_{\mathscr{Y}} U \ar[d]\\
	\mathscr{X} \ar[r]^>>>>>>>>>>{\Delta_f} & \mathscr{X}\times_{\mathscr{Y}}\mathscr{X}
}
\]
it follows inductively that $U_n$ is affine. Denote by $\pi_i:U_i\rightarrow \mathscr{X}$ the projection, and by $\check{C}(U_{\bullet},\pi_{\bullet}^*\mathcal{F})$ the Čech complex, i.e. the complex associated with the simplicial abelian group $i\mapsto \Gamma(U_i, \pi^* \mathcal{F})$. Since $f$ is quasi-compact and quasi-separated, $R^if_* \mathcal{F}$ are quasi-coherent for $i\geq 0$. By \cite[Theorem 2.3, Theorem 6.14]{Ols07},
\[
Rf_* \mathcal{F}=f_*\check{C}(U_{\bullet},\pi_{\bullet}^*\mathcal{F}),
\]
where both sides are regarded as  complexes of $A$-modules. Let $U'_i=U\times_{\mathscr{Y}}\mathscr{Y}'$. Then we also have 
\[
Rf'_* \mathcal{F}'=f'_*\check{C}(U'_{\bullet},\pi_{\bullet}^*\mathcal{F}'),
\]
for quasi-coherent sheaves over $\mathscr{X}'$. By the tor-independence of $\mathcal{G}$ and $\mathcal{F}$ over $\mathscr{Y}$, we conclude
\[
\mathcal{G}\otimes^L_{\mathscr{Y}} Rf_* \mathcal{F}=\mathcal{G}\otimes_{\mathscr{Y}} f_*\check{C}(U_{\bullet},\pi_{\bullet}^*\mathcal{F})
=f'_*\check{C}(U_{\bullet},\mathcal{G}\otimes_{\mathscr{Y}}\pi_{\bullet}^*\mathcal{F})
=Rf'_* (\mathcal{G}\otimes^ L_{\mathscr{Y}} \mathcal{F}).
\]

\end{proof}

\begin{remark}
More generally, if  the assumption that $\Delta_f$ is affine is replaced by the weaker one that $f$ is quasi-separated, one can follow the line of the proof of \cite[08IB]{StPr}.
\end{remark}

\begin{theorem}
Let $S$ be a locally noetherian scheme. 
Let $f:\mathscr{X}\rightarrow \mathscr{Y}$ be a proper morphism of  Deligne-Mumford stacks of finite type over $S$. Let $\mathcal{F}$ be a coherent sheaf on $X$.  Suppose that $\mathcal{F}$ is flat over $\mathscr{Y}$.  For a geometric point $\bar{x}\rightarrow \mathscr{X}$ let $G_{\bar{x}}$ (resp. $H_{\bar{x}}$) denote the stabilizer group of $\bar{x}$ (resp. of $f(\bar{x})$), and let $K_{\bar{x}}$ denote the kernel of $G_{\bar{x}}\rightarrow H_{\bar{x}}$. Suppose that for every geometric point $\bar{x}$ of $\mathscr{X}$, the order of the group $K_{\bar{x}}$ is invertible in the field $k(\bar{x})$. 
Then $Rf_* \mathcal{F}$ is perfect.
\end{theorem}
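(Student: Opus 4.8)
The plan is to reduce to the case where $\mathscr{Y}$ is an affine noetherian scheme and $\mathscr{X}$ is \emph{tame} over it, and then to combine a uniform cohomological dimension bound with a base change argument. Since perfectness of a complex descends along faithfully flat morphisms, I would first pick a smooth surjection $Y\to\mathscr{Y}$ from a scheme and, after passing to an affine open, assume $Y=\Spec A$ with $A$ noetherian. As $Y\to\mathscr{Y}$ is representable, in the fiber product $\mathscr{X}_Y=\mathscr{X}\times_{\mathscr{Y}}Y$ the automorphism group of a geometric point $\bar x'$ lying over $\bar x\in\mathscr{X}$ is exactly the kernel of $G_{\bar x}\to H_{\bar x}$, that is $K_{\bar x}$. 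Hence the hypothesis on the orders of the $K_{\bar x}$ says precisely that $\mathscr{X}_Y$ is a tame Deligne-Mumford stack over the scheme $Y$. Since $f$ is proper (so separated, with $\mathscr{X}$ a Deligne-Mumford stack) the diagonal $\Delta_f$ is affine, and since $\mathcal{F}$ is flat over $\mathscr{Y}$ it is tor-independent over $\mathscr{Y}$ with every sheaf; thus Proposition \ref{prop-base-change} identifies the pullback of $Rf_*\mathcal{F}$ to $Y$ with $Rf_{Y*}$ of the pullback of $\mathcal{F}$. It therefore suffices to prove the theorem for $f:\mathscr{X}\to\Spec A$ with $\mathscr{X}$ tame and $\mathcal{F}$ coherent and flat.

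Next I would establish that $f$ has finite cohomological dimension, uniformly over the base and stably under further base change; this is the step where tameness is essential. Let $c:\mathscr{X}\to X$ be the coarse moduli space, which exists by Keel-Mori and is proper over $\Spec A$ because $f$ is proper, and let $\bar f:X\to\Spec A$ be the induced map. For a tame stack the invariants functor is exact, so $c_*$ is exact and $Rc_*\mathcal{G}=c_*\mathcal{G}$ for every quasi-coherent $\mathcal{G}$. As $X$ is a noetherian algebraic space and $\bar f$ is proper, $R\bar f_*$ has cohomological dimension bounded by some integer $N$ depending only on $X/\Spec A$. Therefore $Rf_*=R\bar f_*\circ c_*$ kills all quasi-coherent sheaves in degrees $>N$; moreover the coarse space and this bound are preserved under flat base change, so the same $N$ works after any $\Spec B\to\Spec A$. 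Boundedness and coherence of $Rf_*\mathcal{F}$ follow from the finiteness theorem for proper morphisms of noetherian Deligne-Mumford stacks.

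It then remains to upgrade the bounded complex $Rf_*\mathcal{F}$, with coherent cohomology, to a perfect one, which over the noetherian ring $A$ amounts to checking finite Tor-amplitude. For any $A$-algebra $B$, write $g:\Spec B\to\Spec A$ and let $h:\mathscr{X}_B\to\mathscr{X}$ be the base change. Using flatness of $\mathcal{F}$ (hence tor-independence with $\mathcal{O}_{\Spec B}$ over $\mathscr{Y}$) and Proposition \ref{prop-base-change}, I obtain a natural isomorphism
\[
B\otimes^L_A Rf_*\mathcal{F}\;\xrightarrow{\ \sim\ }\;Rf_{B*}(h^*\mathcal{F}).
\]
Because $\mathscr{X}_B$ is again tame over $\Spec B$, the right-hand side is concentrated in degrees $\leq N$ by the previous paragraph. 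Thus $Rf_*\mathcal{F}$ has Tor-amplitude contained in $[0,N]$; being in addition bounded with coherent cohomology over the noetherian ring $A$, it is perfect, and perfectness descends back to $\mathscr{Y}$.

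The main obstacle is the second step: passing from the pointwise invertibility of $|K_{\bar x}|$ to a genuinely \emph{uniform} and base-change-stable cohomological dimension bound for $Rf_*$. Concretely, one must know that tameness makes the coarse-space pushforward $c_*$ exact — equivalently that the higher cohomology of the residual gerbes vanishes — and that this behaviour is inherited by every base change $\mathscr{X}_B/\Spec B$; without the invertibility hypothesis the cohomology of a classifying stack $BG$ in modular characteristic is unbounded, destroying both the boundedness and the finite Tor-amplitude. Verifying the stabilizer computation $\mathrm{Aut}(\bar x')=K_{\bar x}$ after base change, and the exactness of $c_*$ in the relative tame setting, are the technical points on which the whole argument rests.
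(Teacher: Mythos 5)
Your proof is correct and takes essentially the same route as the paper's: reduce to an affine chart $\Spec(A)$ of $\mathscr{Y}$, where the hypothesis on the kernels $K_{\bar x}$ says exactly that the base-changed stack is tame and proper over $\Spec(A)$, obtain coherence of $R^if_*\mathcal{F}$ from the finiteness theorem, and deduce finite Tor-amplitude by combining proposition \ref{prop-base-change} (valid since $\Delta_f$ is affine and $\mathcal{F}$ is flat) with a uniform cohomological-dimension bound for tame proper stacks. The only difference is that you prove that uniform bound inline via exactness of the coarse-space pushforward, whereas the paper simply cites \cite[theorem 11.6.5]{Ols16}, whose proof is the same coarse-space argument.
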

\begin{proof}
The question is local on $\mathscr{Y}$, so we can replace $\mathscr{Y}$ with an affine chart $\Spec(A)$. Then the assumption says that $\mathscr{X}$ is a tame Deligne-Mumford stack proper over $\Spec(A)$ (see the first part of the proof of \cite[Theorem 11.6.5]{Ols16}). By \cite[Theorem 11.6.1]{Ols16}, $R^i f_* \mathcal{F}$ is coherent for any $i\geq 0$. So it remains to show that there exists $n_0$, such that for any $A$-module $M$, $H^i(M\otimes^L Rf_* \mathcal{F})=0$ for $i>n_0$. By Proposition \ref{prop-base-change}, $H^i(M\otimes^L Rf_* \mathcal{F})=R^i f_*(M\otimes_A \mathcal{F})$. Now again by \cite[Theorem 11.6.5]{Ols16}), such a uniform bound $n_0$ exists.
\end{proof}


\begin{corollary}\label{cor-locally-constant}
Let $S$ be a locally noetherian scheme, $\mathscr{X}$ a  \emph{tame} Deligne-Mumford stack proper over $S$. Let $\mathcal{F}$ be a coherent sheaf on $X$, and suppose that $\mathcal{F}$ is flat over $S$. Then the function
\[
s\mapsto \chi(\mathscr{X}_s, \mathcal{F}_s):=\sum_{i=0}^{\infty}(-1)^i H^i(\mathscr{X}_s, \mathcal{F}/\mathfrak{m}_s \mathcal{F})
\]
is locally constant on $S$.
\end{corollary}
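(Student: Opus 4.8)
The plan is to deduce this directly from the preceding theorem, which guarantees perfectness of $Rf_*\mathcal{F}$, together with the base-change isomorphism of Proposition \ref{prop-base-change}. First I would note that the assertion is local on $S$, so I may assume $S=\Spec(A)$ with $A$ noetherian, and write $f:\mathscr{X}\to S$ for the structure morphism, which is proper (hence separated and quasi-compact) since $\mathscr{X}$ is proper over $S$. Because $S$ is a scheme, every geometric point of $S$ has trivial stabilizer, so in the notation of the preceding theorem the kernel $K_{\bar x}$ coincides with the full stabilizer group $G_{\bar x}$ of $\bar x$ in $\mathscr{X}$. The tameness hypothesis on $\mathscr{X}$ says precisely that $|G_{\bar x}|$ is invertible in $k(\bar x)$ for every geometric point $\bar x$, so the hypotheses of the preceding theorem are satisfied and $Rf_*\mathcal{F}$ is a perfect complex of $A$-modules.

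Next, after shrinking $\Spec(A)$ if necessary, I would represent the perfect complex $Rf_*\mathcal{F}$ by a bounded complex $P^\bullet$ of finite free $A$-modules. For a point $s\in S$ with residue field $\kappa(s)$, the base-change isomorphism of Proposition \ref{prop-base-change} (applied to $g:\Spec\kappa(s)\to S$ and $\mathcal{G}=\kappa(s)$, which is tor-independent from $\mathcal{F}$ over $S$ because $\mathcal{F}$ is $S$-flat) identifies $Rf_*\mathcal{F}\otimes^L_A\kappa(s)$ with $R(f_s)_*\mathcal{F}_s$, where $f_s:\mathscr{X}_s\to\Spec\kappa(s)$ is the fiber. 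Taking cohomology yields
\[
H^i(\mathscr{X}_s,\mathcal{F}_s)\cong H^i\big(P^\bullet\otimes_A\kappa(s)\big)
\]
for every $i$. (The separatedness of $f$ makes $\Delta_f$ affine, as recorded in the remark after Proposition \ref{prop-base-change}, so the proposition indeed applies here.)

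Finally, I would compute $\chi(\mathscr{X}_s,\mathcal{F}_s)=\sum_i(-1)^i\dim_{\kappa(s)}H^i(P^\bullet\otimes_A\kappa(s))$. Since $P^\bullet\otimes_A\kappa(s)$ is a bounded complex of finite-dimensional $\kappa(s)$-vector spaces, the Euler characteristic of its cohomology equals the alternating sum of the dimensions of its terms, that is
\[
\chi(\mathscr{X}_s,\mathcal{F}_s)=\sum_i(-1)^i\,\mathrm{rank}_A P^i,
\]
which is manifestly independent of $s$. Hence $s\mapsto\chi(\mathscr{X}_s,\mathcal{F}_s)$ is constant on the neighborhood on which $Rf_*\mathcal{F}$ is represented by $P^\bullet$, and therefore locally constant on $S$. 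I do not expect a serious obstacle: the substantive content is already packaged into the perfectness statement proved above, and the only points requiring genuine care are confirming that tameness of $\mathscr{X}$ over the scheme $S$ supplies exactly the invertibility hypothesis $|K_{\bar x}|\in k(\bar x)^\times$ of the preceding theorem, and verifying the tor-independence needed for Proposition \ref{prop-base-change}, both of which follow at once from the flatness of $\mathcal{F}$ over $S$.
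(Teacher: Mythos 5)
Your proof is correct and is exactly the argument the paper intends: the corollary is stated without proof precisely because it follows from the preceding perfectness theorem together with Proposition \ref{prop-base-change} in the standard way (tameness over a scheme base gives $K_{\bar x}=G_{\bar x}$, flatness gives the tor-independence, and the alternating sum of ranks of a strictly perfect representative is locally constant). Nothing is missing.
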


\section{Algebraic cobordism of a list of bundles}\label{sec:alg-cobordism}
In this section we generalize the main results of \cite{LeeP12} to algebraic cobordism of a list of vector bundles.
Let $n$ be a natural number, and $\mathbf{r}=(r_1,\dots,r_k)$ a list of natural numbers. 
The case $k=1$ and the case $r_1=\dots=r_k=1$ are treated in \cite{LeeP12}.

Let $\omega_{n,\mathbf{r}}(\Bbbk)$ be the abelian group generated by the triples $(X,\mathbf{E})$, where $X$ is a smooth connected projective scheme of dimension $n$ over $\Bbbk$, $\mathbf{E}=(E_1,\dots,E_k)$ a list of vector bundles, with $E_i$ a vector bundle on $X$ of rank $r_i$, modulo the relations
\begin{equation}\label{eq-doublepoint-relation-list}
[(X,\mathscrbf{E}|_X)]=[(Y_1,\mathscrbf{E}|_{Y_1})]
+[(Y_2,\mathscrbf{E}|_{Y_2})]-[(\mathbb{P}_D,\mathscrbf{E}|_{\mathbb{P}_D})]
\end{equation}
if there exists a double point degeneration $X\overset{\mathfrak{X}}{\rightsquigarrow}Y_1\underset{D}{\cup} Y_2$, with $\mathfrak{X}\rightarrow U$, where $U$ is an open subscheme of $\mathbb{P}^1$, and $\mathscrbf{E}=(\mathscr{E}_1,\dots,\mathscr{E}_k)$ is a list of vector bundles on $\mathfrak{X}$.

\begin{remark}
In the definition of \cite{LevP09}, the base curve of a double point degeneration is $\mathbb{P}^1$, and it was not assumed that all fibers except the central one are smooth. Since the morphism $\mathfrak{X}\rightarrow U$ can always be completed to a projective flat morphism to $\mathbb{P}^1$, the resulted algebraic cobordism (resp. that of vector bundles) are the same as that of \cite{LevP09} (resp. \cite{LeeP12}).
\end{remark}

\begin{definition}
A \emph{partition} $\lambda$ is an unordered tuple $(a_1,\dots,a_l)$ of natural numbers, and we denote $l(\lambda)=l$ and $|\lambda|=\sum_{i=1}^l a_i$. The empty partition is allowed, i.e.  $l=0$. If $\lambda=(a_1,\dots,a_l)$ be a partition, a \emph{subpartition} of $\lambda$ is a partition obtained by deleting some $a_i$'s in $(a_1,\dots,a_l)$.  
\end{definition}

Let $\mathcal{P}_{n,\mathbf{r}}$ be the set of $(k+1)$-tuples of partitions $(\lambda,\mu_1,\dots,\mu_k)$, satisfying that $|\lambda|=n$, $\mu_1\sqcup\dots\sqcup \mu_k$ forms a subpartition of $\lambda$, and $l(\mu_i)\leq r_i$ for $1\leq i\leq k$. For a partition $\lambda=(\lambda_1,\dots,\lambda_{l(\lambda)})$, put $\mathbb{P}^{\lambda}=\mathbb{P}^{\lambda_1}\times\cdots\times \mathbb{P}^{\lambda_{l(\lambda)}}$. For $(\lambda,\mu_1,\dots,\mu_k)\in \mathcal{P}_{n,\mathbf{r}}$, and $m\in \mu_1\sqcup\dots\sqcup \mu_k$, let $L_m$ be the pull-back of the line bundle $\mathcal{O}_{\mathbb{P}^m}(1)$ via the projection $\mathbb{P}^{\lambda}\rightarrow \mathbb{P}^m$. We define a map $
\phi: \mathcal{P}_{n,\mathbf{r}}\rightarrow \omega_{n,\mathbf{r}}$
by
\[
\phi(\lambda,\mu_1,\dots,\mu_k)=[\mathbb{P}^{\lambda},E_1,\dots,E_k].
\]
where
\[
E_i=\mathcal{O}_{\mathbb{P}^{\lambda}}^{r-l(\mu_i)}\oplus\bigoplus_{m\in \mu_i}L_m,\ 1\leq i\leq k.
\]
Let $\mathcal{Q}_{n,\mathbf{r}}$ be the set of $(k+1)$-tuples of partitions $(\nu,\mu_1,\dots,\mu_k)$, satisfying $|\nu|+\sum_{i=1}^k |\mu_i|=n$, and that the largest part of 
$\mu_i \leq r_i$, for $1\leq i\leq k$. There is a bijective map
\[
\epsilon: \mathcal{Q}_{n,\mathbf{r}}\rightarrow \mathcal{P}_{n,\mathbf{r}}
\]
\[
(\nu,\mu_1,\dots,\mu_k)\mapsto (\nu\cup \mu_1^t\cup\dots\cup \mu_k^t,\mu_1^t,\dots,\mu_k^t).
\]

Consider the ring $\mathbb{Q}[u_1,\dots,u_n,v_1^{(1)},\dots,v_{r_1}^{(1)},\dots,v_1^{(k)},\dots,v_{r_k}^{(k)}]$ of $n+|\mathbf{r}|$ variables.
Let $\mathcal{C}_{n,\mathbf{r}}$ be the vector space of degree $n$  polynomials  of this ring.
To each
\begin{equation}\label{eq-expression-elementsof-Q}
 (\lambda,\mu_1,\dots,\mu_k)=(1^{l_1}\cdots n^{l_n},1^{m_{1,1}}\cdots r_1^{m_{1,r_1}},\dots,
 1^{m_{k,1}}\cdots r_k^{m_{k,r_k}})
 \end{equation}
 in $\mathcal{Q}_{n,\mathbf{r}}$, we attach a monomial in $\mathcal{C}_{n,\mathbf{r}}$ 
 \begin{equation}\label{eq-basis-chernpolynomial}
 \mathsf{C}(\lambda,\mu_1,\dots,\mu_k)=
 u_1^{l_1}\cdots u_n^{l_n}\cdot(v_{1}^{(1)})^{m_{1,1}}\cdots (v_{r_1}^{(1)})^{m_{1,r_1}}
 \cdot\cdots\cdot(v_{1}^{(k)})^{m_{k,1}}\cdots (v_{r_k}^{(k)})^{m_{k,r_k}}.
 \end{equation}
 Since the monomials of this form form a basis of $\mathcal{C}_{n,\mathbf{r}}$, the set  $\mathcal{Q}_{n,\mathbf{r}}$ corresponds to a basis of $\mathcal{C}_{n,\mathbf{r}}$ in a natural way. 

 Given a representative $(X,\mathbf{E})$ of some class in $\omega_{n,\mathbf{r}}(\Bbbk)$, we evaluate the monomial (\ref{eq-basis-chernpolynomial}) by
 \begin{equation}\label{eq-evaluate-chernpolynomial}
 \int_{X}c_1(T_X)^{l_1}\cdots c_{n}(T_X)^{l_n}\cdot c_1(E_1)^{m_{1,1}}\cdots 
 c_{r_1}(E_1)^{m_{1,r_1}}\cdots c_1(E_k)^{m_{k,1}}\cdots c_{r_k}(E_k)^{m_{k,r_k}}.
 \end{equation}
This evaluation extends to $\mathcal{C}_{n,\mathbf{r}}$ linearly. For $f\in \mathcal{C}_{n,\mathbf{r}}$, we denote the corresponding integrand by $\Phi_f(\mathbf{E})$. 
 \begin{proposition}
The evaluation (\ref{eq-evaluate-chernpolynomial}) factors through the relations (\ref{eq-doublepoint-relation-list}), and thus define a bilinear pairing 
\[
\rho:\omega_{n,\mathbf{r}}\otimes \mathcal{C}_{n,\mathbf{r}}\rightarrow \mathbb{Z},\
([X,\mathbf{E}],f)\mapsto \int_X \Phi_{f}(\mathbf{E}).
\]
 \end{proposition}
\begin{proof}
One can mimic the proof of \cite[Proposition 5]{LeeP12}, namely the $k=1$ case. Here we take a different way. In Appendix \ref{sec:identity-chern-doublepoint}, for a double point degeneration $X\overset{\mathfrak{X}}{\rightsquigarrow}Y_1\underset{D}{\cup} Y_2$  we show an identity (\ref{eq-chern-identity-doublepointdegeneration}) of Chern classes of the tangent bundles of $X$, $Y_1$, $Y_2$ and $D$ in $\mathrm{CH}(\mathfrak{X})$, which is stronger than the numerical equivalence that we need. So the conclusion follows.
\end{proof}

\begin{theorem}\label{thm-algebraic-cobordism-listofbundles}
The pairing $\rho: \omega_{n,\mathbf{r}}\otimes \mathcal{C}_{n,\mathbf{r}}\rightarrow \mathbb{Q}$ is perfect, and $\phi(\mathcal{P}_{n,\mathbf{r}})$ form a $\mathbb{Q}$-basis of $\omega_{n,\mathbf{r}}$.
\end{theorem}
\begin{proof} When $k=1$ this is \cite[Theorem 1 and Theorem 4]{LeeP12}. Their proof (see pages 1089-1090 and 1096-1097 of \cite{LeeP12}) generalizes to arbitrary $k$  directly. We give a sketch. For $1\leq i\leq k$ and an element expressed as (\ref{eq-expression-elementsof-Q}) we define its $v^{(i)}$-degree as
\[
\deg_{v^{(i)}}(1^{l_1}\cdots n^{l_n},1^{m_{1,1}}\cdots r_1^{m_{1,r_1}},\dots,
 1^{m_{k,1}}\cdots r_k^{m_{k,r_k}})=(m_{i,1},\dots,m_{i,r_i})\in \mathbb{Z}_{\geq 0}^{r_i}.
\]
For any $r\geq 1$ we equip $\mathbb{Z}_{\geq 0}^{r}$ with the lexicographic order: $(m_1,\dots,m_r)<(m'_1,\dots,m'_r)$ if there exists $j$ such that $m_a=m'_a$ for $a>j$ and $m_j<m'_j$. Then the argument of \cite[proof of Lemma 7]{LeeP12} shows that if there exists $i$ such that 
\[
\deg_{v^{(i)}}(\lambda,\mu_1,\dots,\mu_k)<\deg_{v^{(i)}}(\lambda,\mu'_1,\dots,\mu'_k),
\]
then
\[
\rho\big(\phi\circ \epsilon(\lambda,\mu_1,\dots,\mu_k),(\lambda,\mu'_1,\dots,\mu'_k)
\big)=0.
\]
So the matrix 
\begin{equation}
	\mathbf{M}_{(\lambda,\mu_1,\dots,\mu_k),(\lambda,\mu'_1,\dots,\mu'_k)}:=\rho\big(\phi\circ \epsilon(\lambda,\mu_1,\dots,\mu_k),(\lambda,\mu'_1,\dots,\mu'_k)
\big)
\end{equation}
is a block triangular matrix. Then the argument of \cite[proof of Proposition 8]{LeeP12} shows that $\mathbf{M}$ is nonsingular. So the map
\[
 \omega_{n,\mathbf{r}}\otimes \mathbb{Q}\rightarrow \mathcal{C}_{n,\mathbf{r}}
\]
induced by $\rho$ is surjective. It remains to show
\[
\mathrm{rank}\ \omega_{n,\mathbf{r}}\otimes \mathbb{Q}\leq \mathrm{rank}(\mathcal{C}_{n,\mathbf{r}})=\sharp(\mathcal{P}_{n,r_1,\dots,r_k}).
\]
For a representative $[Y,E_1,\dots,E_k]$, by a repeated application of \cite[Lemma 13]{LeeP12}, there exists a birational morphism $p:\hat{Y}\rightarrow Y$ such that each $p^* E_i$ has a filtration by subbundles such that the subquotients are line bundles. Then the proof of \cite[Proposition 12]{LeeP12} and  the argument of \cite[\S 3.2]{LeeP12} reduces the problem to the case $r_1=\cdots=r_k=1$, which is treated in \cite[\S 2]{LeeP12}. 
\end{proof}

\section{Tautological sheaves and the degeneration formula}\label{sec:tautological-sheaves-degeneration-formula}
\begin{proposition}\label{prop-smoothlocus-quotient}
Let $C$ be a smooth curve over $\Bbbk$, $\mathfrak{X}$ a smooth algebraic space over $\Bbbk$, and $\pi:\mathfrak{X}\rightarrow C$ a bipartite simple  degeneration. Denote by $\mathfrak{X}[n]^{sm}$ the locus of points that are smooth on the fibers of $\pi[n]:\mathfrak{X}[n]\rightarrow C[n]$.
Then the $G[n]$-action on $\mathfrak{X}[n]^{sm}$ is affine, and the geometric quotient $\mathfrak{X}[n]^{sm}/G[n]$ is naturally isomorphic to $\mathfrak{X}$. In particular, $\mathcal{I}_{\mathfrak{X}/C}^1=\mathfrak{X}$.
\end{proposition}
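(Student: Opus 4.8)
The plan is to realise the quotient morphism explicitly as the natural projection $\phi\colon X[n]^{sm}\to X$ obtained by composing the small resolution $X[n]\to X\times_{\mathbb{A}^1}\mathbb{A}^{n+1}$ with the first projection to $X$. Since $G[n]$ acts trivially on the $X$-factor, $\phi$ is $G[n]$-invariant, and it is the candidate for the quotient map. Both assertions --- that the action is affine and that the quotient is $X$ --- can be tested \'etale-locally on $X$, because the formation of $X[n]$, of its smooth-fibre locus, and of a quotient by an action covered by invariant affines all commute with \'etale base change on $X$. I would therefore reduce at once to the local model $X=\Spec\ \Bbbk[x,y,z,\dots]$ with $t=xy$ of Proposition \ref{prop-simple-degeneration-local-model} (together with the trivial behaviour away from $D=Y_1\cap Y_2$, where $X[n]=X\times_C C[n]$), so that everything becomes explicit torus geometry.

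Next I would produce a $G[n]$-invariant affine open cover of $X[n]^{sm}$. By (the proof of) Lemma \ref{lemma-local-smoothness} the smooth-fibre locus is covered by $\{x\neq 0\}$, $\{y\neq 0\}$ and $V_k=\{u_k\neq 0,\ v_k\neq 0\}$ for $1\leq k\leq n$. Each is cut out by the non-vanishing of a $G[n]$-semi-invariant coordinate, hence is $G[n]$-invariant; and each is affine: on $\{x\neq 0\}$ the classes $[u_i:v_i]$ are pinned down recursively by the equations \eqref{eq-local-equations}, so this locus is a principal open of the affine scheme $X\times_{\mathbb{A}^1}\mathbb{A}^{n+1}$ (and symmetrically for $\{y\neq 0\}$), while on $V_k$ the equations solve as in Lemma \ref{lemma-local-smoothness} to give $V_k\cong \mathbb{G}_m\times\mathbb{A}^{n+1}\times\Spec\ \Bbbk[z,\dots]$ with coordinates $w=u_k/v_k$, $t_1,\dots,t_{n+1}$ and the remaining $X$-coordinates. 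This covering by invariant affines is precisely the statement that the $G[n]$-action is affine.

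On the chart $V_k$ the action is the linear torus action $t_i\mapsto\sigma_i t_i$, $w\mapsto\sigma_1\cdots\sigma_k\, w$, with the remaining coordinates fixed, so its ring of invariants is spanned by invariant monomials; a direct weight computation, using the relation $\sigma_1\cdots\sigma_{n+1}=1$ in the character lattice of $G[n]$, shows that these are exactly the monomials in $x=t_1\cdots t_k\, w^{-1}$, $y=w\, t_{k+1}\cdots t_{n+1}$ and the remaining $X$-coordinates. Hence $\Gamma(V_k)^{G[n]}=\Bbbk[x,y,z,\dots]=\mathcal{O}(X)$, i.e. $V_k/\!\!/G[n]\cong X$, and the analogous computation on $\{x\neq 0\}$ and $\{y\neq 0\}$ yields the principal opens $D(x)$ and $D(y)$; these patch to identify the quotient $X[n]^{sm}/\!\!/G[n]$ with $X$ and the quotient morphism with $\phi$. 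Over $C\setminus\{0\}$ all $t_i$ are invertible, the action is free, and the quotient is geometric in the strict sense; in particular, for $n=1$ the stability condition (ii) of Proposition \ref{thm-stable-locus} selects a single free orbit of $\phi$ over each point of $X$, whence $\mathcal{I}^1_{X/C}=[\mathbf H^1_{\st}/G[1]]\cong X$.

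The main obstacle I expect lies over the central fibre. There the fibres of $\phi$ are unions of several torus orbits supported on the $\mathbb{P}^1$-chains of $\pi[n]^{-1}(0)$, with the degenerate orbits lying in the closure of the generic ones and a unique closed orbit over each point of $Y_1\cup_D Y_2$; consequently the quotient is genuinely only a good quotient away from $C\setminus\{0\}$, and one must argue carefully both that the invariant rings are as claimed and that the patching over the several charts is consistent. It is also exactly here that one must verify, for $n=1$, that the stability condition isolates one orbit over each point of $X$, in order to pass from the good quotient to the honest geometric quotient computing $\mathcal{I}^1_{X/C}$.
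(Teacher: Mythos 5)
Your proposal follows essentially the same route as the paper's own proof: étale-local reduction to the local model $\Spec\ \Bbbk[x,y,z,\dots]$ with $t=xy$, the same $G[n]$-invariant affine cover $\{x\neq 0\}$, $\{u_k\neq 0, v_k\neq 0\}$, $\{y\neq 0\}$ of $X[n]^{sm}$ (giving affineness of the action), the same torus-weight computation of the invariant rings yielding $D(x)$, copies of the local model, and $D(y)$, and patching by naturality and étale descent. Your handling of the $n=1$ claim is, if anything, more careful than the paper's: the paper asserts that $\mathbf{H}^1_{\st}$ is all of $X[1]^{sm}$ with free $G[1]$-action, whereas, as you correctly flag, condition (ii) of Proposition \ref{thm-stable-locus} cuts out a strictly smaller open set on which exactly one free orbit lies over each point of $X$ --- the central-fibre fixed points you worry about are precisely the unstable ones, so your stability argument is the right way to close that step.
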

\begin{proof}
 Let $U\rightarrow \mathfrak{X}$ be an étale chart such that $U$ is an affine scheme and there is an étale morphism $f: U\rightarrow	V=\Spec\ \Bbbk[x,y,z,\dots,t]/(xy-t)$. Then there  is a $G[n]$-equivariant étale morphism $f[n]: U[n]\rightarrow  V[n]$. So we need only compute $V[n]^{sm}/G[n]$. Recall that $V[n]$ is the closed subscheme of 
\[
\Spec\ \Bbbk[x,y,z,\dots,t_1,\dots,t_{n+1}]\times \prod_{i=1}^n\mathrm{Proj}\ \Bbbk[u_i,v_i],
\]
defined by 
\begin{eqnarray*}
	\begin{cases}
	u_1 x=v_1 t_1,\\
	u_i v_{i-1}=v_i u_{i-1} t_i,\ 1<i\leq n,\\
	v_n y=u_n t_{n+1}.
	\end{cases}
	\end{eqnarray*}
By Lemma \ref{lemma-fiber-smoothness} $V[n]^{sm}$ is the open subscheme 
\[
\{x\neq 0\}\cup\bigcup_{k=1}^{n}\{u_k\neq 0,v_k\neq 0\}\cup \{y\neq 0\}.
\]
Let $F_0=\{x\neq 0\}$, $F_k=\{u_k\neq 0,v_k\neq 0\}$, $1\leq k\leq n$, and $F_{n+1}=\{y\neq 0\}$. Then the $F_j$'s are affine and $G[n]$-invariant for $0\leq j\leq n+1$. So the $G[n]$-action on $V[n]^{sm}$ is affine, and thus the $G[n]$-action on $U[n]^{sm}$ is also affine, for $U[n]^{sm}=f[n]^{-1}(V[n])$ and $f[n]$ are affine. 

We have $F_0=\Spec(A_0)$, where
\begin{align*}
A_0=\frac{\Bbbk[x,\frac{1}{x},y,z,\dots,t_1,\dots,t_{n+1},u_1,\dots,u_n]}{(u_1-\frac{t_1}{x_1},\dots,u_i-u_{i-1}t_i,\dots,u_{n}-u_{n-1}t_n,y-u_n t_{n+1})},
\end{align*}
with the $G[n]$-action
\begin{eqnarray*}
&&\sigma.u_i=\sigma_1\cdots \sigma_i u_i,\ 1\leq i\leq n,\\
&& \sigma. t_i=\sigma_i t_i,\ 1\leq i\leq n+1;
\end{eqnarray*}
and $F_k=\Spec(A_k)$, where
\begin{eqnarray*}
A_k=\frac{\Bbbk[x,y,z,\dots,t_1,\dots,t_{n+1},v_1,v_2,\dots,v_{k-1},v_k,\frac{1}{v_k},u_{k+1},u_{k+2},\dots,u_n]}{\big(x-t_1\cdots t_k v_k,\underbrace{v_i-t_{i+1}\cdots t_{k} v_k}_{1\leq i\leq k-1},
\underbrace{u_i-\frac{t_{k+1}\cdots t_{i}}{v_k}}_{k+1\leq i\leq n},y-\frac{t_{k+1}\cdots t_{n+1}}{v_k}\big)}
\end{eqnarray*}
with the $G[n]$-action
\begin{eqnarray*}
&&\sigma.v_i=\frac{v_i}{\sigma_1\cdots \sigma_i},\ 1\leq i\leq k,\\
&&\sigma.u_i=\sigma_1\cdots \sigma_i u_i,\ k+1\leq i\leq n,\\
&&\sigma.t_i=\sigma_i t_i,\ 1\leq i\leq n+1;
\end{eqnarray*}
and $F_{n+1}=\Spec(A_{n+1})$, where
\begin{eqnarray*}
A_{n+1}=\frac{\Bbbk[x,y,\frac{1}{y},z,\dots,t_1,\dots,t_{n+1},v_1,\dots,v_n]}{(x-v_1 t_1,\dots,v_{i-1}-v_{i}t_i,\dots,v_{n-1}-v_{n}t_n,v_n-\frac{t_{n+1}}{y})}
\end{eqnarray*}
with the $G[n]$-action
\begin{eqnarray*}
&&\sigma.v_i=\frac{v_i}{\sigma_1\cdots \sigma_i},\ 1\leq i\leq n,\\
&&\sigma.t_i=\sigma_i t_i,\ 1\leq i\leq n+1.
\end{eqnarray*}

We compute the invariant subrings $A_k^{G[n]}$:
\begin{eqnarray*}
A_0^{G[n]}&=&\big(\Bbbk[x,\frac{1}{x},t_1,\dots,t_k]/(y-\frac{t_1\cdots t_n}{x})\big)^{G[n]}
=\Bbbk[x,\frac{1}{x},y,t_1\cdots t_n]/(xy-t_1\cdots t_k),\\
A_k^{G[n]}&=&\left(\frac{\Bbbk[x,y,z,\dots,t_1,\dots,t_{n+1},v_k,\frac{1}{v_k}]}{\big(x-t_1\cdots t_k v_k,y-\frac{t_{k+1}\cdots t_{n+1}}{v_k}\big)}\right)^{G[n]}=\Bbbk[x,y,z,\dots],\\
A_{n+1}^{G[n]}&=&\big(\Bbbk[x,y,\frac{1}{y},z,\dots,t_1,\dots,t_{n+1}]/(x-\frac{t_1\cdots t_{n+1}}{y})\big)^{G[n]}\\
&=&\Bbbk[x,y,\frac{1}{y},t_1\cdots t_n]/(xy-t_1\cdots t_k).
\end{eqnarray*}
So $V[n]^{sm}/G[n]\cong V$. All the involved isomorphisms are natural, thus varying $U$ and using étale descent we obtain a natural isomorphism $\mathfrak{X}[n]^{sm}/G[n]=\mathfrak{X}$.

By definition $\mathcal{I}_{\mathfrak{X}/C}^1=[\mathbf{H}_{\st}^1/G[1]]$, and $\mathbf{H}_{\st}^1$ is the open subspace of $\mathfrak{X}[1]$ consisting of the points which are smooth along the fibers of $\mathfrak{X}[1]\rightarrow C[1]$. In the one point case the $G[1]$-action is free. So the resulted stack is the geometric quotient.
\end{proof}

By the construction of $\mathcal{I}_{\mathfrak{X}/C}^n$ and Proposition \ref{prop-smoothlocus-quotient}, the  diagram
\begin{equation}
	\xymatrix{
	\mathcal{Z}_n \ar[r] \ar[d] & \mathfrak{X}[n]^{sm} \\
	\mathbf{H}_{\st}^n & 
	}
\end{equation}
induces a diagram
\begin{equation}
	\xymatrix{
	\big[\mathcal{Z}_n/G[n]\big] \ar[r]^>>>>>{q} \ar[d]_{p} & \big[\mathfrak{X}[n]^{sm}/G[n]\big] \ar[r]^>>>>>{r} & \mathfrak{X} \\
	\mathcal{I}_{\mathfrak{X}/C}^n  & ,
	}
\end{equation}
where $r$ is the projective from $\big[\mathfrak{X}[n]^{sm}/G[n]\big]$ to its coarse moduli $\mathfrak{X}$, as shown in Proposition \ref{prop-smoothlocus-quotient}. 
Let $\mathbf{q}=r\circ q$. For a vector bundle $E$ on $\mathfrak{X}$, define
\[
E^{[n]}=p_* \mathbf{q}^* E.
\]
Since $p$ is  representable and finite, $E^{[n]}$ is a vector bundle on  $\mathcal{I}_{\mathfrak{X}/C}^n$.\\

Let $Y$ be a smooth proper algebraic space over $\Bbbk$, and $D$ a closed smooth subspace of $Y$. Let $F$ be a vector bundle on $Y$. Denote by $\tilde{F}$ the pullback of $E$ onto $\mathfrak{S}_{Y,D}$ via
\[
 \mathfrak{S}_{Y,D}=\mathrm{Bl}_{D}(Y\times \mathbb{P}^1)\rightarrow Y\times \mathbb{P}^1\xrightarrow{p_1} Y.
\]
Apply the above construction to $\mathfrak{X}=\mathfrak{S}_{Y,D}\rightarrow \mathbb{P}^1$. Then we define
\[
 F^{[n]}_{Y,D}:=\tilde{F}^{[n]}|_{\mathrm{Hilb}^n(Y,D)},
 \] 
and
\[
F^{[n]}_{D,N}=\tilde{F}^{[n]}|_{\mathrm{Hilb}^n(D,N)}.
\]

\begin{theorem}[Degeneration formula]\label{thm-degeneration-formula}
Let $\Bbbk$ be a field of characteristic $0$.
Let $\mathfrak{X}\rightarrow C$ be a bipartite simple  degeneration,  $\pi^{-1}(0)=Y_1\cup Y_2$, and $D=Y_1\cap Y_2$. Let $E$ and $F$ be two vector bundles on $\mathfrak{X}$. Let $N$ be the normal bundle of $D$ in $Y_1$. Let $\xi$ by a $\Bbbk$-point of $C$ away from $0\in C$, and $\mathfrak{X}_{\xi}$ the fiber over $\xi$. Let $E_{\xi}$ be the restriction of $E$ to $\mathfrak{X}_{\xi}$. Let $E_i$ be the restriction of $E$ to $Y_i$ for $i=1,2$. Let $E_0$ be the restriction of $E$ to $D$. Similarly for $F$.
 Then
\begin{eqnarray}\label{eq-degeneration-formula}
&&\log\Big(1+\sum_{n=1}^{\infty}\chi\big(\Lambda_{-u}(E_{\xi}^{[n]}),
\Lambda_{-v}(F_{\xi}^{[n]})\big)Q^n\Big)\notag\\
&=&\log\Big(1+\sum_{n=1}^{\infty}\chi\big(\Lambda_{-u}(E_{1,Y_1,D}^{[n]}),
\Lambda_{-v}(F_{1,Y_1,D}^{[n]})\big)Q^n\Big)\notag\\
&&+\log\Big(1+\sum_{n=1}^{\infty}\chi\big(\Lambda_{-u}(E_{2,Y_2,D}^{[n]}),
\Lambda_{-v}(F_{2,Y_2,D}^{[n]})\big)Q^n\Big)\notag\\
&&-\log\Big(1+\sum_{n=1}^{\infty}\chi\big(\Lambda_{-u}(E_{0,D,N}^{[n]}),
\Lambda_{-v}(F_{0,D,N}^{[n]})\big)Q^n\Big).
\end{eqnarray}
\end{theorem}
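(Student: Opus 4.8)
The plan is to verify the equivalent system (\ref{eq-inclusion-exclusion-1}) of Lemma \ref{lem-intro-inc-exc}. Write $\gamma_n=\chi\big(\Lambda_{-u}(E_\xi^{[n]}),\Lambda_{-v}(F_\xi^{[n]})\big)$ for the left-hand coefficients, and let $\alpha_k,\beta_k,\delta_k$ denote the coefficients of the three series on the right-hand side of (\ref{eq-degeneration-formula}), associated respectively with $\mathrm{Hilb}^k(Y_1,D)$, $\mathrm{Hilb}^k(Y_2,D)$ and $\mathrm{Hilb}^k(D,N)$. By Lemma \ref{lem-intro-inc-exc} (with $(a,b,d,c)$ replaced by $(\alpha,\beta,\delta,\gamma)$) it suffices to prove the identity
\[
\gamma_n=\sum_{r=1}^{n+1}(-1)^{r-1}\sum_{S_r(n)}\alpha_{i_r}\,\delta_{i_1-i_2}\cdots\delta_{i_{r-1}-i_r}\,\beta_{j_1}.
\]
The first step is deformation invariance. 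Since $\chi(G_1,G_2)=\chi(G_1^\vee\otimes G_2)$ for locally free $G_1$, the number $\gamma_n$ is the Euler characteristic of the vector bundle $(\Lambda_{-u}E^{[n]})^\vee\otimes\Lambda_{-v}F^{[n]}$ on the fiber of $\mathcal{I}^n_{X/C}$. This bundle is flat over $C$, and $\mathcal{I}^n_{X/C}$ is proper and flat over $C$ by Theorem \ref{thm-properness-X-algebraic-space} and Proposition \ref{prop-flatness-good-degeneration}; as $\mathrm{char}(\Bbbk)=0$ the stack is tame, so Corollary \ref{cor-locally-constant} shows the fiberwise Euler characteristic is constant on $C$. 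The construction identifies the general fiber with $\mathrm{Hilb}^n(X_\xi)$ and $E^{[n]}$ with $E_\xi^{[n]}$, so $\gamma_n$ equals the same Euler characteristic computed on the central fiber $(\mathcal{I}^n_{X/C})_0$.

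Next I would run inclusion--exclusion on $(\mathcal{I}^n_{X/C})_0$. By Proposition \ref{prop-central-fiber-relativeHilbertscheme}(i)--(ii) this central fiber is the scheme-theoretic union of the divisors $(\mathcal{I}^n_{X/C})_{\{i\}}=V(t_i)$, $1\le i\le n+1$, whose scheme-theoretic intersections are the deeper strata $(\mathcal{I}^n_{X/C})_I$, all governed by the regular sequence $t_1,\dots,t_{n+1}$. Tensoring the resulting Mayer--Vietoris complex of structure sheaves with the locally free coefficient preserves exactness, and additivity of $\chi$ gives
\[
\gamma_n=\sum_{\emptyset\neq I\subseteq[n+1]}(-1)^{|I|-1}\,\chi\big((\mathcal{I}^n_{X/C})_I,\,(\Lambda_{-u}E^{[n]})^\vee\otimes\Lambda_{-v}F^{[n]}\big).
\]

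Then, for $I=\{a_1<\dots<a_r\}$, Proposition \ref{prop-central-fiber-relativeHilbertscheme}(iii) identifies the stratum with a product of relative Hilbert stacks. Because the parametrized subschemes lie in the smooth loci of the fibers, they distribute among the chain of components without meeting the nodes, so the universal subscheme splits as a disjoint union over the factors; hence $E^{[n]}|_I$ is the external direct sum of $E_{1,Y_1,D}^{[a_1-1]}$, the bundles $E_{0,D,N}^{[a_{k+1}-a_k]}$, and $E_{2,Y_2,D}^{[n+1-a_r]}$, and similarly for $F$. Multiplicativity of $\Lambda_{-u}$ on direct sums together with the K\"unneth formula for $\chi$ on products of proper tame Deligne--Mumford stacks then factorizes the contribution of $I$ as $\alpha_{a_1-1}\,\delta_{a_2-a_1}\cdots\delta_{a_r-a_{r-1}}\,\beta_{n+1-a_r}$.

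Finally I would match the combinatorics: the substitution $i_s=a_{r+1-s}-1$ ($1\le s\le r$) is a bijection from size-$r$ subsets $I\subseteq[n+1]$ onto the strictly decreasing tuples parametrizing $S_r(n)$, sending the contribution of $I$ to exactly $\alpha_{i_r}\,\delta_{i_1-i_2}\cdots\delta_{i_{r-1}-i_r}\,\beta_{j_1}$ with $j_1=n-i_1$, and $(-1)^{|I|-1}=(-1)^{r-1}$. This is precisely the displayed identity, so Lemma \ref{lem-intro-inc-exc} converts the system into the claimed equality of logarithms. I expect the main obstacle to be the third step: verifying at the level of coherent sheaves, not merely underlying spaces, that the tautological bundle restricts to an external direct sum on each stratum --- which is exactly where the smooth-locus condition built into $\mathbf{H}^n_{\st}$ is indispensable --- and that the K\"unneth factorization of $\chi$ is valid in the tame stacky setting. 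The deformation-invariance and Mayer--Vietoris steps are comparatively routine given Corollary \ref{cor-locally-constant} and Proposition \ref{prop-central-fiber-relativeHilbertscheme}.
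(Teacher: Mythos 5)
Your proposal follows essentially the same route as the paper's proof: deformation invariance via Corollary \ref{cor-locally-constant}, inclusion--exclusion over the scheme-theoretic strata of the central fiber from Proposition \ref{prop-central-fiber-relativeHilbertscheme}, the splitting of the tautological bundle as an external direct sum on each stratum, the K\"unneth factorization of $\chi$ (which the paper carries out by applying Proposition \ref{prop-base-change} successively to the projections of the product decomposition), and finally Lemma \ref{lem-intro-inc-exc} to convert the resulting family of identities into the equality of logarithms. The combinatorial bijection you spell out between subsets $I\subseteq[n+1]$ and the tuples in $S_r(n)$ is exactly what the paper leaves implicit in its citation of Lemma \ref{lem-intro-inc-exc}, so your write-up is a correct and slightly more explicit rendering of the same argument.
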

\begin{proof}
Note that $\Lambda_{-u}(\cdot)$ is multiplicative, and for a vector bundle taking dual commutes with pulling back. Let $\varphi$ be the composition $\varphi:\mathcal{I}^n_{\mathfrak{X}/C}\rightarrow C[n]\rightarrow C$.
Apply Corollary \ref{cor-locally-constant} to $\varphi:\mathcal{I}^n_{\mathfrak{X}/C}\rightarrow C$. The fiber  $\varphi^{-1}(\xi)$ is isomorphic to $\mathrm{Hilb}^n(\mathfrak{X}_{\xi})$. 
We obtain
\begin{equation}\label{eq-proof-thm-degeneration-formula-1}
\chi\big(\mathrm{Hilb}^n(\mathfrak{X}_{\xi}),E^{[n]}|_{\varphi^{-1}(\xi)}\big)
=\chi\big((\mathcal{I}_{\mathfrak{X}/C}^{n})_0,E^{[n]}|_{\varphi^{-1}(0)}\big).
\end{equation}
We decompose the fiber $\mathcal{I}_{\mathfrak{X}/C}^n$ as (\ref{eq-centralfiber-components}). The description (\ref{eq-intersection-strata}) of the scheme theoretic intersections of the the components in (\ref{eq-centralfiber-components}) enable us to express the right handside of (\ref{eq-proof-thm-degeneration-formula-1}) as
\begin{equation}\label{eq-proof-thm-degeneration-formula-2}
	\chi\big((\mathcal{I}_{\mathfrak{X}/C}^{n})_0,E^{[n]}|_{\varphi^{-1}(0)}\big)
=\sum_{\emptyset\neq I\subset [n+1]} (-1)^{|I|-1}
\mathcal{\mathfrak{X}}\big((\mathcal{I}_{\mathfrak{X}/C}^{n})_I,E^{[n]}|_{(\mathcal{I}_{\mathfrak{X}/C}^{n})_I}\big).	
\end{equation}
By (\ref{eq-centralfiber-strata-decomposition}) and the definition of $E^{[n]}$, we have
\begin{equation}\label{eq-proof-thm-degeneration-formula-3}
	E^{[n]}|_{(\mathcal{I}_{\mathfrak{X}/C}^{n})_I}\cong
	\mathrm{pr}_0^* E_1^{[a_{1}-1]}\oplus
	\bigoplus_{i=1}^{r-1}\mathrm{pr}_i^* E_0^{[a_{i+1}-a_i]}\oplus 
	\mathrm{pr}_{r}^* E_2^{[n+1-a_r]}.
\end{equation}
Replacing $E^{[n]}$ in (\ref{eq-proof-thm-degeneration-formula-1}) and (\ref{eq-proof-thm-degeneration-formula-1}) by 
$\lambda_{-u}E^{[n]\vee}\otimes \lambda_{-v}F^{[n]}$, then use (\ref{eq-proof-thm-degeneration-formula-3}) and that $\Lambda_{-u}(\cdot)$ is multiplicative, we obtain
\begin{eqnarray}\label{eq-proof-thm-degeneration-formula-4}
&&\chi\big(\Lambda_{-u}(E_{\xi}^{[n]}),\Lambda_{-v}(F_{\xi}^{[n]})\big)\notag\\
&=&\sum_{\emptyset\neq I\subset [n+1]} (-1)^{|I|-1}
\chi\Big((\mathcal{I}_{\mathfrak{X}/C}^{n})_I,
\mathrm{pr}_0^*(\lambda_{-u}E_1^{[a_1-1]\vee}\otimes \lambda_{-v}F_1^{[a_1-1]})\otimes\notag\\
&&	\bigotimes_{i=1}^{r-1}
	\mathrm{pr}_i^*(\lambda_{-u}E_0^{[a_{i+1}-a_i]\vee}\otimes \lambda_{-v}F_0^{[a_{i+1}-a_i]})\otimes 
	\mathrm{pr}_{r}^*(\lambda_{-u}E_2^{[n+1-a_r]\vee}\otimes \lambda_{-v}F_2^{[n+1-a_r]}) \Big).
\end{eqnarray}
Apply Proposition \ref{prop-base-change}  successively to the projection from the first $i$ factors of (\ref{eq-centralfiber-strata-decomposition}) to the first $i-1$ factors, $i=r+1,r,\dots,2$, we obtain
\begin{eqnarray}\label{eq-proof-thm-degeneration-formula-5}
&&\chi\Big((\mathcal{I}_{\mathfrak{X}/C}^{n})_I,
\mathrm{pr}_0^*(\lambda_{-u}E_1^{[a_1-1]\vee}\otimes \lambda_{-v}F_1^{[a_1-1]})\otimes\notag\\
&&	\bigotimes_{i=1}^{r-1}
	\mathrm{pr}_i^*(\lambda_{-u}E_0^{[a_{i+1}-a_i]\vee}\otimes \lambda_{-v}F_0^{[a_{i+1}-a_i]})\otimes 
	\mathrm{pr}_{r}^*(\lambda_{-u}E_2^{[n+1-a_r]\vee}\otimes \lambda_{-v}F_2^{[n+1-a_r]}) \Big)\notag\\
&=& \chi\big(\mathrm{Hilb}^{a_1-1}(Y_1,D),\lambda_{-u}E_{1,Y_1,D}^{[a_1-1]\vee}\otimes \lambda_{-v}F_{1,Y_1,D}^{[a_1-1]}\big)\notag\\
&&\cdot\prod_{i=1}^{r-1}\chi\big(\mathrm{Hilb}^{a_{i+1}-a_i}(D,N_{Y_1/D}),\lambda_{-u}E_{0,D,N}^{[a_{i+1}-a_i]\vee}\otimes \lambda_{-v}F_{0,D,N}^{[a_{i+1}-a_i]})\notag\\
&&\cdot\chi\big(\mathrm{Hilb}^{n+1-a_{r}}(Y_2,D),\lambda_{-u}E_{2,Y_2,D}^{[n+1-a_r]\vee}\otimes \lambda_{-v}F_{2,Y_2,D}^{[n+1-a_r]}).
\end{eqnarray}
(\ref{eq-proof-thm-degeneration-formula-4}) and (\ref{eq-proof-thm-degeneration-formula-5})
together yield
\begin{eqnarray}\label{eq-proof-thm-degeneration-formula-6}
&&\chi\big(\Lambda_{-u}(E_{\xi}^{[n]}),\Lambda_{-v}(F_{\xi}^{[n]})\big)\notag\\
&=&\sum_{\emptyset\neq I\subset [n+1]} (-1)^{|I|-1}	\chi\big(\lambda_{-u}E_{1,Y_1,D}^{[a_1-1]},\lambda_{-v}F_{1,Y_1,D}^{[a_1-1]}\big)
\cdot\prod_{i=1}^{r-1}\chi\big(\lambda_{-u}E_{0,D,N}^{[a_{i+1}-a_i]},\lambda_{-v}F_{0,D,N}^{[a_{i+1}-a_i]})\notag\\
&&\cdot\chi\big(\lambda_{-u}E_{2,Y_2,D}^{[n+1-a_r]},\lambda_{-v}F_{2,Y_2,D}^{[n+1-a_r]}).
\end{eqnarray} 
By Lemma \ref{lem-intro-inc-exc}, the collection of (\ref{eq-proof-thm-degeneration-formula-6}) for $n$ running through the natural numbers, is equivalent to (\ref{eq-degeneration-formula}). The proof is completed.
\end{proof}

\begin{theorem}[Inclusion-exclusion principle]\label{thm-inc-exc-principle}
Assume the assumptions and notations as Theorem \ref{thm-degeneration-formula}.
Let $\mathbb{P}_D=\mathbb{P}(N_{Y_1/D}\oplus \mathcal{O}_D)$, and $E|_{\mathbb{P}_D}$ the pullback of $E$ via $\mathbb{P}_D\rightarrow D\hookrightarrow \mathfrak{X}$.
Then
\begin{eqnarray}\label{eq-inc-exc-principle}
&&\log\Big(1+\sum_{n=1}^{\infty}\chi\big(\Lambda_{-u}(E|_{\mathfrak{X}_{\xi}}^{[n]}),
\Lambda_{-v}(F|_{\mathfrak{X}_{\xi}}^{[n]})\big)Q^n\Big)
+\log\Big(1+\sum_{n=1}^{\infty}\chi\big(\Lambda_{-u}(E|_{\mathbb{P}_D}^{[n]}),
\Lambda_{-v}(F|_{\mathbb{P}_D}^{[n]})\big)Q^n\Big)\notag\\
&=&\log\Big(1+\sum_{n=1}^{\infty}\chi\big(\Lambda_{-u}(E|_{Y_1}^{[n]}),
\Lambda_{-v}(F|_{Y_1}^{[n]})\big)Q^n\Big)
+\log\Big(1+\sum_{n=1}^{\infty}\chi\big(\Lambda_{-u}(E|_{Y_2}^{[n]}),
\Lambda_{-v}(F|_{Y_2}^{[n]})\big)Q^n\Big).\notag\\
\end{eqnarray}
\end{theorem}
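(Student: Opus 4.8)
The plan is to obtain \eqref{eq-inc-exc-principle} as a formal linear combination of four applications of the degeneration formula (Theorem~\ref{thm-degeneration-formula}): one for $\pi:X\to C$ itself, and three for auxiliary standard degenerations $\mathcal{S}_{Y,D}=\mathrm{Bl}_D(Y\times\mathbb{P}^1)$. To keep the bookkeeping light, for a smooth proper algebraic space $Y$ (or a relative pair) carrying the bundles induced by $E$ and $F$, I abbreviate the series $\log\big(1+\sum_{n\geq1}\chi(\Lambda_{-u}((\cdot)^{[n]}),\Lambda_{-v}((\cdot)^{[n]}))Q^n\big)$ by $L$ of the corresponding space; thus the four summands of \eqref{eq-inc-exc-principle} are $L(X_\xi)$, $L(\mathbb{P}_D)$, $L(Y_1)$, $L(Y_2)$, while $L(Y_i,D)$ and $L(D,N)$ denote the series built from the relative Hilbert stacks $\mathrm{Hilb}^n(Y_i,D)$ and $\mathrm{Hilb}^n(D,N)$ introduced above. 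Write $N=N_{D/Y_1}$, so that $N_{D/Y_2}=N^\vee$ and $\mathbb{P}_D=\mathbb{P}(N\oplus\mathcal{O}_D)$; for each $\mathbb{P}^1$-bundle below, $D_\infty$ denotes the section along which the bubble component is glued, whose normal bundle is dual to that of the adjacent component.

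First I would record Theorem~\ref{thm-degeneration-formula} for $\pi$ itself, namely $L(X_\xi)=L(Y_1,D)+L(Y_2,D)-L(D,N)$. Then I would feed into the same theorem the three standard degenerations, each the blow-up of a smooth proper space along a smooth centre, hence a simple degeneration with smooth total space, whose central fibres are $Y_1\cup_D\mathbb{P}_D$, $Y_2\cup_D\mathbb{P}(N^\vee\oplus\mathcal{O}_D)$, and $\mathbb{P}_D\cup_{D_\infty}\mathbb{P}(N^\vee\oplus\mathcal{O}_D)$. This produces
\begin{align*}
L(Y_1)&=L(Y_1,D)+L(\mathbb{P}_D,D_\infty)-L(D,N),\\
L(Y_2)&=L(Y_2,D)+L(\mathbb{P}(N^\vee\oplus\mathcal{O}_D),D_\infty)-L(D,N^\vee),\\
L(\mathbb{P}_D)&=L(\mathbb{P}_D,D_\infty)+L(\mathbb{P}(N^\vee\oplus\mathcal{O}_D),D_\infty)-L(D,N^\vee).
\end{align*}
Subtracting the third line from the sum of the first two cancels all of $L(\mathbb{P}_D,D_\infty)$, $L(\mathbb{P}(N^\vee\oplus\mathcal{O}_D),D_\infty)$ and $L(D,N^\vee)$ at once, and leaves $L(Y_1,D)+L(Y_2,D)-L(D,N)$, which is $L(X_\xi)$ by the previous paragraph. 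Hence $L(Y_1)+L(Y_2)-L(\mathbb{P}_D)=L(X_\xi)$, which is exactly \eqref{eq-inc-exc-principle}. I note in passing that the seductive identity $L(D,N)=L(D,N^\vee)$ is never invoked: the $N^\vee$-bridge term cancels on its own between the last two lines.

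The work is entirely in the geometric identifications rather than in any estimate, and the hard part will be making sure the relative terms recur verbatim so that they really cancel. Concretely, both $E|_{\mathbb{P}_D}$ and $E|_{\mathbb{P}(N^\vee\oplus\mathcal{O}_D)}$ are pulled back from $D$, so their restrictions to the gluing sections are $E|_D$ (and similarly for $F$); consequently each of $L(\mathbb{P}_D,D_\infty)$, $L(\mathbb{P}(N^\vee\oplus\mathcal{O}_D),D_\infty)$, $L(D,N^\vee)$ is literally the same series wherever it appears. Verifying this amounts to rerunning the summand identification \eqref{eq-proof-thm-degeneration-formula-3} inside each auxiliary degeneration and matching the sections and their normal bundles correctly --- in particular that in $\mathcal{S}_{Y_1,D}$ the bubble is $\mathbb{P}(N\oplus\mathcal{O}_D)$ glued along $D_\infty$, whose normal bundle $N^\vee$ is dual to $N_{D/Y_1}=N$, and that degenerating $\mathbb{P}_D$ along the \emph{same} section $D_\infty$ in the fourth formula reproduces the very pairs $(\mathbb{P}_D,D_\infty)$ and $(\mathbb{P}(N^\vee\oplus\mathcal{O}_D),D_\infty)$ occurring in the first two. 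With these identifications and the standing hypothesis $\mathrm{char}(\Bbbk)=0$ (inherited by the auxiliary degenerations), the theorem is the one-line linear combination displayed above.
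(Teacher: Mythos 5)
Your proof is correct and is essentially the paper's own argument: four applications of Theorem~\ref{thm-degeneration-formula} (to $X\rightarrow C$ and to three standard degenerations $\mathcal{S}_{Y_1,D}$, $\mathcal{S}_{Y_2,D}$, $\mathcal{S}_{\mathbb{P}_D,\cdot}$), combined linearly so that all relative terms cancel. The only deviation is that you degenerate $\mathbb{P}_D$ along $D_\infty$ whereas the paper uses the pair $(\mathbb{P}_D,D_0)$; your choice makes every relative and rubber term recur verbatim, while the paper's choice tacitly relies on the canonical identifications $(\mathbb{P}_D,D_0)\cong(\mathbb{P}(N^\vee\oplus\mathcal{O}_D),D_\infty)$ and $\mathrm{Hilb}^n(D,N)\cong\mathrm{Hilb}^n(D,N^\vee)$ for its cancellation, a point left implicit there.
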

\begin{proof}
Apply (\ref{eq-degeneration-formula}) to $\mathfrak{X}\rightarrow \mathbb{P}^1$, and the standard degenerations associated with $(Y_1,D)$, $(Y_2,D)$ and $(\mathbb{P}_D,D_0)$ respectively. Sum the 1st and the 4th resulted equalities, and subtract the 2nd and the 3rd. The terms involving the relative Hilbert spaces cancel, and  we obtain (\ref{eq-inc-exc-principle}).
\end{proof}
As a consequence we obtain the existence of \emph{universal polynomials}:
\begin{corollary}\label{cor-universalSeries}
Let $\Bbbk$ be a field of characteristic 0.
Suppose given natural numbers $d$, $r_1$ and $r_2$. Then 
\begin{enumerate}
	\item[(i)] there exists a series of polynomials $f_{i,j,k}\in \mathcal{C}_{d,r,s}$, for $i\geq 1$ and $j,k\geq 0$, such that for any smooth proper algebraic space $X$ of pure dimension $d$ and vectors bundles $E$ and $F$, with $\mathrm{rank}(E)=r$ and $\mathrm{rank}(F)=s$, we have
	\begin{equation}\label{eq-universalSeries-1}
		1+\sum_{n=1}^{\infty}\chi(\Lambda_{-u}E^{[n]},\Lambda_{-v}F^{[n]})Q^n
		=\exp\Big(\sum_{i=1}^{\infty}\sum_{j=0}^{\infty}\sum_{k=0}^{\infty} Q^i u^j v^k 
		\int_X \Phi_{f_{i,j,k}}(E,F)
		\Big);
	\end{equation}
	\item[(ii)] given a series of polynomials $f_{i,j,k}\in \mathcal{C}_{d,r,s}$, for $i\geq 1$ and $j,k\geq 0$, to verify that (\ref{eq-universalSeries-1}) holds for all smooth proper algebraic space $X$ of pure dimension $d$ over $\Bbbk$ and vector bundles $E$ and $F$, with $\mathrm{rank}(E)=r$ and $\mathrm{rank}(F)=s$, it suffices to verify it for all triples in $\mathcal{P}_{d,r,s}$.
\end{enumerate}
\end{corollary}
\begin{proof}
By Theorem \ref{thm-inc-exc-principle}, the series
\[
 \log\big(1+\sum_{n=1}^{\infty}\chi(\Lambda_{-u}E^{[n]},\Lambda_{-v}F^{[n]})Q^n\big),
 \] 
 and thus each coefficient of $Q^i u^j v^k$, 
 factors through the double point relation (\ref{eq-doublepoint-relation-list}), so by the first statement of Theorem \ref{thm-algebraic-cobordism-listofbundles} there exists $f_{i,j,k}\in \mathcal{C}_{d,r,s}$ such that (\ref{eq-universalSeries-1}) holds for all smooth projective schemes $X$ of pure dimension $d$ over $\Bbbk$ and vectors bundles $E$ and $F$ with ranks $r$ and $s$ respectively. 

Now let $X$ be a smooth proper algebraic space of pure dimension $d$ over $\Bbbk$. 
Suppose that $Z$ is a smooth closed  subspace of $X$. 
By the proof of \cite[Lemma 5.1]{LevP09}, 
there exists a double point  degeneration $X\overset{\mathfrak{X}}{\rightsquigarrow}Y_1\cup_D Y_2$ over $\mathbb{P}^1$, where $Y_1$ is isomorphic to the blowup $X_Z$ of $X$ along $Z$, and $Y_2$ and $\mathbb{P}_D$ are projective bundles over smooth proper algebraic spaces of lower dimensions.
Thus Theorem \ref{thm-inc-exc-principle} yields
\begin{eqnarray}\label{eq-cor-universalSeries-1}
&&\log\Big(1+\sum_{n=1}^{\infty}\chi\big(\Lambda_{-u}(E|_{X}^{[n]}),
\Lambda_{-v}(F|_{X}^{[n]})\big)Q^n\Big)\notag\\
&=&\log\Big(1+\sum_{n=1}^{\infty}\chi\big(\Lambda_{-u}(E|_{X_Z}^{[n]}),
\Lambda_{-v}(F|_{X_Z}^{[n]})\big)Q^n\Big)
+\log\Big(1+\sum_{n=1}^{\infty}\chi\big(\Lambda_{-u}(E|_{Y_2}^{[n]}),
\Lambda_{-v}(F|_{Y_2}^{[n]})\big)Q^n\Big)\notag\\
&&-\log\Big(1+\sum_{n=1}^{\infty}\chi\big(\Lambda_{-u}(E|_{\mathbb{P}_D}^{[n]},
\Lambda_{-u}(F|_{\mathbb{P}_D}^{[n]})\Big).
\end{eqnarray}
Other other hand, by Theorem \ref{thm-chern-identity-doublepointdegeneration} we have
\begin{eqnarray}
&& \sum_{i=1}^{\infty}\sum_{j=0}^{\infty}\sum_{k=0}^{\infty} Q^i u^j v^k 
		\int_X \Phi_{f_{i,j,k}}(E|_X,F|_X)\notag\\
&=& \sum_{i=1}^{\infty}\sum_{j=0}^{\infty}\sum_{k=0}^{\infty} Q^i u^j v^k 
		\int_{X_Z} \Phi_{f_{i,j,k}}(E|_{X_Z},F|_{X_Z})+
	\sum_{i=1}^{\infty}\sum_{j=0}^{\infty}\sum_{k=0}^{\infty} Q^i u^j v^k 
		\int_{Y_2} \Phi_{f_{i,j,k}}(E|_{Y_2},F|_{Y_2})\notag\\
	&&-	\sum_{i=1}^{\infty}\sum_{j=0}^{\infty}\sum_{k=0}^{\infty} Q^i u^j v^k 
		\int_{\mathbb{P}_D} \Phi_{f_{i,j,k}}(E|_{\mathbb{P}_D},F|_{\mathbb{P}_D}).
\end{eqnarray}
 Hence (\ref{eq-universalSeries-1}) holds for $(X,E,F)$ if it holds for the triples $(X_Z,E|_{X_Z},F|_{X_Z})$, $(Y_2,E|_{Y_2},F|_{Y_2})$, and $(\mathbb{P}_D,E|_{\mathbb{P}_D},F|_{\mathbb{P}_D})$. 
 Using Theorem \ref{thm-Moishezon-algsp}, and a dévissage, (\ref{eq-universalSeries-1})  is reduced to the projective case. So (i) is proved.

(ii) follows from (i) and the second statement of Theorem \ref{thm-algebraic-cobordism-listofbundles}.
\end{proof}

Corollary \ref{cor-universalSeries} (ii) and the definition of $\mathcal{P}_{d,1,1}$ yields the following:
\begin{corollary}\label{cor-reduction-to-toric}
Let $\Bbbk$ be a field of characteristic $0$.
Assume that  Conjecture \ref{conj-1} holds for all triples of the form
\begin{equation}\label{eq-cobordism-representative}
	(\mathbb{P}^{d_1}\times\cdots \times\mathbb{P}^{d_l},p_{i}^* \mathcal{O}(k_1),p_{j}^*\mathcal{O}(k_2))
\end{equation}
where $d_1+\dots+d_l=d$, $k_1,k_2=0$ or $1$, and $1\leq i,j\leq l$, and $p_i$ is the projection to the $i$-th factor. Then it holds for all smooth proper algebraic spaces $X$ of dimension $d$ over $\Bbbk$ and line bundles $K,L$ on $X$.
\end{corollary}

\begin{corollary}\label{cor-3fold-7points}
Let $\Bbbk$ be a field of characteristic $0$.
For 3-dimensional smooth proper algebraic spaces $X$ over $\Bbbk$ and line bundles $K,L$ on $X$, Conjecture \ref{conj-1} holds modulo $Q^7$. Namely, the formulae resulted by expanding both sides of (\ref{eq-conj-1}) hold for Hilbert schemes of $\leq 6$ points.
\end{corollary}
\begin{proof}
By \cite[Proposition 5.8]{Hu21},  Conjecture \ref{conj-1} holds modulo $Q^7$ for toric 3-folds $X$ and equivariant line bundles on $X$. So the conclusion follows from Proposition \ref{cor-reduction-to-toric}.
\end{proof}

\begin{remark}\label{rmk-higher-ranks}
For vector bundles $E$, $F$ of any fixed ranks $r_1$ and $r_2$ respectively, there are some partial results in \cite{Wan16} and \cite{Kru18}.  But a general formula is still missing. In principle one can do the computations for the triples in $\mathcal{P}_{d,r_1,r_2}$ using the method of \cite[\S 3.4]{Hu21}.
\end{remark}

\begin{appendices}
\section{Moishezon's theorem for algebraic spaces}\label{sec:Moishezon-thm}
In this appendix we provide a proof of the Moishezon's theorem (\cite{Moi67}) for algebraic spaces. The results should be well-known, but we lack a reference. No originality is claimed.
\begin{theorem}[Moishezon]\label{thm-Moishezon-algsp}
Let $\Bbbk$ be a field of characteristic zero. Let $X$ be a smooth proper algebraic space over $\Bbbk$. Then there exists a proper morphism $\pi:\widetilde{X}\rightarrow X$, obtained by a finite number of successive blowing-ups along smooth centers, such that $\widetilde{X}$ is  projective over $\Bbbk$.
\end{theorem}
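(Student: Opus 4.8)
The plan is to follow the classical strategy for Moishezon manifolds used in \cite{MM07}: produce a projective birational model of $X$, resolve the induced birational map by blowing up $X$ along smooth centres, and then promote the resolved morphism to a projective one by a fibrewise ampleness argument. For the first step I would invoke Chow's lemma for algebraic spaces: there is a dense open $U\subseteq X$ and a proper morphism $\beta:X'\rightarrow X$, an isomorphism over $U$, with $X'$ a scheme quasi-projective over $\Bbbk$; since $X$ is proper, $X'$ is projective over $\Bbbk$. By Raynaud--Gruson flattening (cofinality of $U$-admissible blow-ups among $U$-modifications), $\beta$ is dominated by a blow-up, so there exist an ideal sheaf $I\subseteq\mathcal{O}_X$ with $V(I)\subseteq X\setminus U$ and a morphism $g:\mathrm{Bl}_I X\rightarrow X'$ which is an isomorphism over $U$.

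Next I would resolve the indeterminacy. As $X$ is smooth and $\mathrm{char}(\Bbbk)=0$, functorial embedded resolution (principalisation) applies to $I$; the point is that its functoriality is \'etale-local, hence descends from schemes to the algebraic space $X$. This produces $\pi:\widetilde X\rightarrow X$, a finite sequence of blow-ups along smooth centres contained in $X\setminus U$, with $I\cdot\mathcal{O}_{\widetilde X}$ invertible. Then $\widetilde X$ is smooth and proper over $\Bbbk$ and $\pi$ is an isomorphism over $U$; by the universal property of blow-ups $\pi$ factors as $\widetilde X\xrightarrow{b}\mathrm{Bl}_I X\rightarrow X$, and composing $b$ with $g$ yields $\psi:=g\circ b:\widetilde X\rightarrow X'$ satisfying $\pi=\beta\circ\psi$.

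For the final upgrade, realise $\pi$ as the blow-up of a single ideal on $X$, so that $\widetilde X$ carries a $\pi$-ample line bundle $A$ (a suitable negative combination of the exceptional divisors). Since $\pi=\beta\circ\psi$, every fibre of $\psi$ is a closed subspace of a fibre of $\pi$, on which $A$ restricts to an ample sheaf; by the fibrewise criterion for relative ampleness, valid for proper morphisms of Noetherian algebraic spaces, $A$ is $\psi$-ample. Consequently $L=A^{\otimes m}\otimes\psi^{*}\mathcal{O}_{X'}(1)$ is ample on $\widetilde X$ for $m\gg 0$, so the a priori algebraic space $\widetilde X$ is a smooth projective $\Bbbk$-scheme, as claimed. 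I expect the main obstacle to be not the birational geometry itself but the verification that these scheme-theoretic inputs --- functorial resolution, Raynaud--Gruson flattening, and especially the fibrewise ampleness criterion --- are genuinely available for algebraic spaces; the fibrewise ampleness step, which the notion of a C-projective morphism is designed to encapsulate, is the conceptual heart, and characteristic zero enters precisely through the functoriality of resolution.
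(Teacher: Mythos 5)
Your strategy is sound, and its first half coincides with the paper's own lemma \ref{lem-elim-indeterminacy}: Chow's lemma for algebraic spaces \cite{Knu71}, domination of the resulting modification by a blowup of an ideal $I$ trivial on $U$, and functorial principalization of $I$, which descends from schemes to algebraic spaces by étale functoriality \cite{BM97} (this is where characteristic zero enters in both arguments; note the paper obtains the domination step not from Raynaud--Gruson but by showing that the Chow morphism is itself a blowup, using that $\mathscr{S}_1$ is torsion-free of rank one on the smooth space $X$). The genuine divergence is the endgame. The paper never has to descend ampleness to an algebraic space: it introduces C-projective morphisms (lemma \ref{lem-Cprojmorphism}) purely as bookkeeping, uses the flattening theorem of \cite{RG71} to make the projective model flat over a further blowup $X_2$ of $X$, dominates $X_2$ by smooth blowups $X_3\rightarrow X$, and then identifies $X_3$ with the scheme $Y_3=X_3\times_{X_2}Y_2$ by Zariski's main theorem (flat, proper and birational onto a smooth target is an isomorphism); projectivity of $X_3$ then comes through a chain of projective morphisms of schemes ending at the projective $Y$. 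You instead transfer projectivity via the fibrewise criterion for relative ampleness applied to $\psi:\widetilde{X}\rightarrow X'$, whose source is an algebraic space. That criterion does exist in the required generality (see the treatment of relative ampleness for proper morphisms of algebraic spaces over a locally Noetherian base in \cite{StPr}: ampleness on a fibre is an open condition on the base, implies relative ampleness over that open locus, and forces the source to be a scheme there), so your route is viable and shorter; but it replaces the paper's elementary ZMT argument by a substantially heavier black box, and you additionally need that a composition of $U$-admissible blowups of an algebraic space is again a blowup, in order to have the single $\pi$-ample line bundle $A$ --- also citable from \cite{StPr}, but it must be cited.

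Two concrete corrections. First, your closing formula is wrong as written: $A^{\otimes m}\otimes\psi^{*}\mathcal{O}_{X'}(1)$ need not be ample for $m\gg 0$. Already for $\psi:\mathrm{Bl}_{p}\mathbb{P}^2\rightarrow\mathbb{P}^2$ with $A=\mathcal{O}(-E)$ one has $\big(\psi^{*}H-mE\big)\cdot\big(\psi^{*}H-E\big)=1-m<0$ for $m\geq 2$, where $\psi^{*}H-E$ is the class of the strict transform of a line through $p$; so the bundle is not even nef. The correct twist puts the large power on the pullback: $A\otimes\psi^{*}\mathcal{O}_{X'}(m)$ is ample for $m\gg 0$; alternatively, once $A$ is $\psi$-ample you can simply conclude that $\widetilde{X}$ is a scheme projective over the projective scheme $X'$, hence projective over $\Bbbk$. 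Second, when you invoke the fibrewise criterion, make sure the statement you cite allows an algebraic-space source and concludes both relative ampleness and representability; the scheme version (EGA IV 9.6.4) is not enough, and this single point is what the paper's entire flattening-plus-ZMT detour is designed to avoid.
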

First we introduce a notion  of projective morphisms of algebraic spaces that behaves well with compositions and  blowing-ups, unconditionally.
\begin{definition}\label{def-CProj}
Let $S$ be a scheme. Let $f:Y\rightarrow X$ be a morphism of algebraic spaces over $S$. We say that $Y$ is a \emph{relative Proj of finite type over $X$}, if there exists a quasi-coherent sheaf $\mathcal{A}$ of graded $\mathcal{O}_X$-algebras of finite type, such that $Y$ is isomorphic to $\underline{\mathrm{Proj}}_{X}(\mathcal{A})$ as algebraic spaces over $X$. 
 We say that $f$ is a \emph{C-projective} morphism, if there exists a finite chain of morphisms of algebraic spaces over $S$
\[
Y=Y_n\xrightarrow{f_n}Y_{n-1}\xrightarrow{f_{n-1}}\dots\xrightarrow{f_{i+1}}Y_i\xrightarrow{f_i}\dots\xrightarrow{f_1}Y_0=X,
\]
such that $Y_i$ is a relative Proj of finite type over $Y_{i-1}$ via $f_i$, for $1\leq i\leq n$, and $f=f_n\circ\cdots\circ f_1$. Here the prefix C stands for “composition".
\end{definition}

\begin{lemma}\label{lem-Cprojmorphism}
Let $S$ be a scheme. The morphisms in the following statements are morphisms of algebraic spaces over $S$.
\begin{enumerate}
	\item[(i)] A closed immersion is C-projective;
	\item[(ii)] If $f:Y\rightarrow X$ is C-projective, and $g:Z\rightarrow X$ is a morphism, then  $f\times_X Z:Y\times_X Z\rightarrow Z$ is C-projective; 
	\item[(iii)] If $f:Z\rightarrow Y$ and $g:Y\rightarrow X$ are C-projective, then $g\circ f$ is C-projective;
	\item[(iv)] Let $\mathcal{I}$ be a quasi-coherent sheaf of finite type of ideals on $X$, and $Z$ be the closed subspace defined by $\mathcal{I}$. Then blowing up $X$ along $Z$ is  C-projective over $X$.
	\item[(v)] If $f:Z\rightarrow Y$ is a morphism, and $g:Y\rightarrow X$ is a separated morphism, such that $g\circ f$ is C-projective, then $f$ is C-projective.
\end{enumerate}
\end{lemma}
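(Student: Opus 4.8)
The plan is to treat (i)--(iv) as essentially formal consequences of the definition together with two standard facts about relative Proj that hold verbatim for algebraic spaces --- that $\underline{\mathrm{Proj}}$ of a finite-type graded quasi-coherent algebra commutes with arbitrary base change, and that the blow-up along a finite-type ideal sheaf is the relative Proj of the associated Rees algebra --- and then to extract the genuinely new content, the cancellation statement (v), from (i)--(iii) by a graph-factorization argument.

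For (i), a closed immersion $Z\hookrightarrow X$ cut out by an ideal $\mathcal{I}$ is realized as $\underline{\mathrm{Proj}}_X\big((\mathcal{O}_X/\mathcal{I})[T]\big)$ with $T$ placed in degree $1$; since the Proj of a one-variable polynomial algebra over $\mathcal{O}_Z$ recovers $Z$ itself, this exhibits a chain of length one, so $Z\hookrightarrow X$ is C-projective. For (iv), the blow-up of $X$ along the subspace defined by the finite-type ideal $\mathcal{I}$ is $\underline{\mathrm{Proj}}_X\big(\bigoplus_{d\geq 0}\mathcal{I}^d\big)$, and the Rees algebra is a finite-type graded $\mathcal{O}_X$-algebra, so this too is a relative Proj of finite type, hence C-projective via a length-one chain. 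Statement (iii) is immediate: one concatenates the defining chains of $f$ and $g$. For (ii), I would base-change the entire defining chain of $f$ along $g\colon Z\to X$; because relative Proj commutes with base change, each base-changed stage is again the relative Proj of the pulled-back graded algebra, and finite type is preserved under pullback, so the base-changed chain realizes $f\times_X Z$ as C-projective.

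The heart of the argument is (v). Given $f\colon Z\to Y$ and a separated $g\colon Y\to X$ with $g\circ f$ C-projective, I would factor $f$ through its graph,
\[
f\colon Z\xrightarrow{\ \Gamma_f\ } Z\times_X Y\xrightarrow{\ \mathrm{pr}_Y\ } Y.
\]
The projection $\mathrm{pr}_Y$ is the base change of $g\circ f\colon Z\to X$ along $g$, hence C-projective by (ii). The graph $\Gamma_f$ sits in a cartesian square whose opposite edge is the diagonal $\Delta_{Y/X}$ (the map $f\times_X\mathrm{id}_Y\colon Z\times_X Y\to Y\times_X Y$ pulls $\Delta_{Y/X}$ back to $\Gamma_f$); since $g$ is separated, $\Delta_{Y/X}$ is a closed immersion, and closed immersions are stable under base change, so $\Gamma_f$ is a closed immersion and therefore C-projective by (i). Composing via (iii) shows that $f$ is C-projective.

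The main obstacle is not any single step but the verification that the relative-Proj formalism I am invoking --- existence of $\underline{\mathrm{Proj}}$, its compatibility with base change, the Rees-algebra description of blow-ups, and the characterization of separatedness by a closed diagonal --- all hold for algebraic spaces and not merely for schemes. Each ingredient is available in the literature (the Stacks Project treatment of relative Proj and of blow-ups of algebraic spaces, \cite{StPr}), so the real work is to assemble them and to confirm that the finite-type hypotheses propagate correctly through base change and composition; once that bookkeeping is in place, the graph-factorization reduction proving (v) is purely formal.
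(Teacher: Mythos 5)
Your proposal is correct and takes essentially the same approach as the paper: items (i), (iii), (iv) are read off from the definition (trivial Proj of $(\mathcal{O}_X/\mathcal{I})[T]$, concatenation of chains, Rees algebra), item (ii) is stage-by-stage base change of relative Proj as in \cite[085C]{StPr}, and item (v) is exactly the standard graph-factorization cancellation argument that the paper cites from \cite[exercise II.4.8]{Har77}. The only difference is that you write out the details the paper leaves implicit.
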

\begin{proof}
The statements (i), (iii) and (iv) are immediate from the definition.
The statement (ii) follows from successive applications of \cite[085C]{StPr}. Finally (v) follows from (i), (ii) and (iii) in a standard way (e.g. \cite[exercise II.4.8]{Har77}).
\end{proof}

We need also the elimination of indeterminacy for algebraic spaces. The following proof follows
\cite{RY02}.
\begin{lemma}[Elimination of indeterminacy]\label{lem-elim-indeterminacy}
Let $\Bbbk$ be a field of characteristic zero. Let $X$ and $Y$ be irreducible and reduced algebraic spaces proper over $\Bbbk$, and $f:X\dashrightarrow Y$ be a rational map. Then there is an algebraic space $X'$ and a morphism $g:X'\rightarrow X$, such that $g$ can be obtained by a finite number of successive blowing-ups along smooth centers, and $f\circ g$ extends to a morphism $X'\rightarrow Y$. 
\end{lemma}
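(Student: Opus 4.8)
The plan is to replace $f$ by its graph and then convert the resulting proper birational contraction into a composition of smooth blow-ups, using flattening and principalization, both available in characteristic zero. First I would choose a dense open subspace $U\subseteq X$ on which $f$ is a morphism and let $\Gamma$ be the scheme-theoretic closure of the image of $(\mathrm{id}_U,f|_U):U\to X\times_{\Bbbk}Y$. Since $Y$ is proper, the first projection $p:\Gamma\to X$ is proper, and it is an isomorphism over $U$, hence birational; writing $q:\Gamma\to Y$ for the second projection, we have $f=q\circ p^{-1}$ as rational maps. The problem is thus reduced to producing a composition of blow-ups with smooth centres $g:X'\to X$ that dominates $p$, i.e.\ for which the rational map $p^{-1}:X\dashrightarrow\Gamma$ becomes a morphism $X'\to\Gamma$; composing with $q$ then extends $f\circ g$.

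To dominate $p$ by blow-ups I would first apply Raynaud--Gruson flattening to $p$: there is a blow-up $b:\bar X\to X$ along a nowhere dense closed subspace $Z=V(\mathcal{I})$ disjoint from $U$ such that the strict transform $\bar\Gamma\to\bar X$ is flat. Being flat, proper and birational onto the integral space $\bar X$, this morphism has $0$-dimensional fibres, hence is finite flat of degree $1$, and therefore an isomorphism; consequently $f$ already extends over $\bar X$ via $\bar X\cong\bar\Gamma\to\Gamma\xrightarrow{q}Y$. The centre $\mathcal{I}$ of $b$ is however arbitrary, so the final step is to upgrade it to smooth centres by principalization: in characteristic zero there is a morphism $g:X'\to X$, obtained by a finite sequence of blow-ups along smooth centres, such that $\mathcal{I}\mathcal{O}_{X'}$ is invertible. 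By the universal property of the blow-up $b$, the morphism $g$ factors as $X'\to\bar X\xrightarrow{b}X$, and then $X'\to\bar X\cong\bar\Gamma\to\Gamma\xrightarrow{q}Y$ is the sought extension of $f\circ g$.

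The essential inputs, and the main obstacle, are that flattening, principalization, and resolution of singularities be available for quasi-separated algebraic spaces of finite type over $\Bbbk$ and not merely for schemes; in characteristic zero these do hold, as Hironaka's resolution and principalization carry over to algebraic spaces and Raynaud--Gruson flattening has an algebraic-space version. A secondary point is that $X$ is only assumed reduced and irreducible, so principalizing $\mathcal{I}$ along smooth centres on a possibly singular $X$ requires care; I would handle it by first resolving $X$ itself, which in characteristic zero is again achieved by a sequence of blow-ups along smooth centres, thereby reducing to the case where the ambient space is smooth before principalizing $\mathcal{I}$. Since all of these operations are compositions of blow-ups with smooth centres, their composite $g$ has the required form, which completes the argument along the lines of \cite{RY02}.
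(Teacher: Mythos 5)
Your argument is correct, but it reaches the crucial ``dominate the graph by a blow-up'' step by a genuinely different route than the paper. The paper resolves $X$ first, then applies the Chow lemma \cite{Knu71} to the closure of the graph to produce a projective birational morphism $h\colon Z\to X$ with $Z$ a scheme, and identifies $Z$ itself as the blow-up of an ideal $\mathscr{I}\subset\mathcal{O}_X$ via the argument of \cite[II.7.17]{Har77}: one writes $Z\cong\underline{\mathrm{Proj}}_X\bigl(\mathcal{O}_X\oplus\bigoplus_{d\geq 1}h_*\mathscr{L}^d\bigr)$ and uses the smoothness of $X$ to turn the rank-one torsion-free sheaf $h_*\mathscr{L}$, after twisting by its dual, into an ideal sheaf; it then principalizes $\mathscr{I}$ by smooth blow-ups \cite{BM97} and factors through $Z$ by the universal property of blow-ups \cite{StPr}. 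You instead invoke Raynaud--Gruson flattening: a $U$-admissible blow-up $\bar X=\mathrm{Bl}_{\mathcal{I}}X$ makes the strict transform of the graph flat, and your claim that a flat, proper, birational morphism onto an integral space is an isomorphism is correct (flatness and the dimension formula for local rings at closed points force zero-dimensional fibres, hence finite flat of degree one); principalization of $\mathcal{I}$ and the universal property then finish exactly as in the paper. What your route buys: it needs neither the Chow lemma nor the reflexive-sheaf argument, and its domination step requires no smoothness of $X$ --- smoothness enters only through principalization with smooth centres, which is precisely why your decision to resolve $X$ before principalizing is the right fix. What the paper's route buys: it exhibits the graph modification itself as a single blow-up and avoids flattening inside this lemma; but your extra input is not foreign to the paper, which itself applies \cite[th\'eor\`eme 5.7.9]{RG71} to algebraic spaces in its proof of the Moishezon theorem, so everything you use is already part of the paper's toolkit.
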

\begin{proof}
By resolution of singularities (\cite{Hir64}) we can assume that $X$ is smooth.
Let $\Gamma\subset X\times_{\Bbbk} Y$ be the graph of $f$, and let $V$ be the closure of $\Gamma$ in $X\times_{\Bbbk} Y$ with a reduced algebraic space structure. By the Chow lemma \cite[IV.3.1]{Knu71}, there is a projective scheme $Z$ and a birational morphism $\phi:Z\rightarrow V$. Let $h:Z\rightarrow X$ be the composition $\mathrm{pr}_1\circ \phi$. Then there is a closed immersion $\iota:Z\rightarrow X\times \mathbb{P}^m$  over $X$. Denote by $\mathscr{L}$ the invertible sheaf $\iota^* \mathcal{O}(1)$, and let $\mathscr{S}=\mathcal{O}_X\oplus \bigoplus_{d\geq 1}h_* \mathscr{L}^d$, a graded sheaf of $\mathcal{O}_X$-modules of finite type, with $\mathscr{S}_d=h_* \mathscr{L}^d$. By the step 1 and 2 in \cite[proof of Theorem II.7.17]{Har77}, replacing $\iota$ by its $e$-tuple embedding if necessary for some $e\gg 1$, we have $Z\cong \underline{\mathrm{Proj}}_X \mathscr{S}$, and $\mathscr{S}$ is (étale) locally generated by $\mathscr{S}_1$. Since $\mathscr{L}$ is torsion free, $\mathscr{S}_1=h_* \mathscr{L}$ is torsion free. Since $h$ is birational, 
$\mathscr{S}_1$ is of rank one. As $X$ is smooth, the dual $\mathscr{S}_1^{\vee}$ is an invertible sheaf. The homomorphism $\mathscr{S}_1\rightarrow \mathscr{S}_1^{\vee\vee}$ is an injection, so $\mathscr{S}_1\otimes \mathscr{S}_1^{\vee}$ is a sheaf of ideals, and we denote it by $\mathscr{I}$. Then the step 5 of \cite[proof of Theorem II.7.17]{Har77} shows that $Z$ is isomorphic to the blowup of $X$ along $\mathscr{I}$. Using the functorial resolution of singularities of schemes \cite[Theorem 1.10]{BM97}, by a finite number of successive blowing-ups along smooth centers, we obtain $g:X'\rightarrow X$, such that $g^* \mathscr{I}$ is invertible. By the universal property of blowups \cite[085U]{StPr}, $g$ factors through $h$. Hence the proof is complete.
\end{proof}

\begin{proof}[Proof of Theorem \ref{thm-Moishezon-algsp}]
For a proof over $\mathbb{C}$ see e.g. \cite[Theorem 2.2.16]{MM07}. Our proof is a slightly modified one.
Without loss of generality we assume that $X$ is irreducible.
As an  algebraic space, $X$ has a dense open subset $U$ which is a scheme. By Nagata's compactification theorem (\cite{Nag63}) and Chow lemma, there is an irreducible projective scheme $Y$ and a birational map $f:Y\dashrightarrow X$. 
\begin{equation}\label{eq-graph-Moishezon}
	\xymatrix{\ar @{} [dr] |{\Box}
	Y_3 \ar[r]^{g_3} \ar[d]_{f_3} & Y_2 \ar[r]^{g_2} \ar[d]^{f_2} & Y_1 \ar[r]^{g_1} \ar[d]^{f_1} & Y \ar@{-->}[d]^{f} \\
	X_3 \ar[r]^{h_3} \ar@/_1pc/[rrr]_{h} & X_2 \ar[r]^{h_2} & X_1 \ar[r]^{h_1} & X
	}
\end{equation}
Since $\mathrm{char}(\Bbbk)=0$, using Lemma \ref{lem-elim-indeterminacy} we can find a commutative square, the rightmost one in (\ref{eq-graph-Moishezon}), where $g_1$ and $h_1$ can be obtained by finitely many times of blowups. Then by the flattening theorem \cite[théorèm 5.7.9]{RG71}, there is a blowup $h_2:X_2\rightarrow X_1$, such that the strict transform of $Y_1$, denoted by $Y_2$, is flat over $X_2$. In particular $g_2$ is itself a blowup of $Y_1$ and is thus a projective morphism (in the EGA sense). Now apply Lemma \ref{lem-elim-indeterminacy} to the rational map $(h_1\circ h_2)^{-1}:X\dashrightarrow X_2$, there exists $h:X_3\rightarrow X$ obtained by finitely many times of blowups along smooth centers, and a morphism $h_3:X_3\rightarrow X_2$, such that $h=h_1\circ h_2\circ h_3$. Since $h$ is C-projective, by Lemma \ref{lem-Cprojmorphism} (v) $h_3$ is also C-projective. Let $Y_3=X_3\times_{X_2}Y_2$. So $f_3$ is flat, and by Lemma \ref{lem-Cprojmorphism} (ii) $g_3$ is C-projective. But $Y_2$ is quasi-compact and quasi-separated scheme, consequently $g_3$ is projective in the EGA sense. Since $f_3$ is flat, birational and proper, and $X_3$ is smooth, by Zariski's main theorem $f_3$ is an isomorphism (note that $f_3$ is representable by schemes, by étale descent we can use Zariski's main theorem for schemes). The composition $g=g_1\circ g_2\circ g_3$ is a projective morphism, for each $g_i$ is. Hence $X_3$ is a smooth projective scheme, as  we want.
\end{proof}

\section{An identity of Chern classes in double point degenerations}\label{sec:identity-chern-doublepoint}
Let $\Bbbk$ be a field. Let $X\overset{\mathfrak{X}}{\rightsquigarrow}Y_1\cup_D Y_2$ be a double point degeneration, with the total space $\pi:\mathfrak{X}\rightarrow \mathbb{P}^1$, where $\mathfrak{X}$ is a smooth proper algebraic space over $\Bbbk$, and $\pi$ is a proper surjective morphism.  Suppose $\dim X=n$.  

Denote $i_X: X\hookrightarrow \mathfrak{X}$, $i_1:Y_1\hookrightarrow \mathfrak{X}$,  $i_2:Y_2\hookrightarrow \mathfrak{X}$, $j_1:D\hookrightarrow Y_1$, $j_2:D\hookrightarrow Y_2$, and also $i_D:D\hookrightarrow \mathfrak{X}=i_1\circ j_1=i_2\circ j_2$, to be the obvious imbeddings. Denote by $\pi_D:\mathbb{P}_D\rightarrow D$ the projection. Recall the \emph{sheaf of regular differentials}  $\widehat{\Omega}^1_{\mathfrak{X}/\mathbb{P}^1}=(\Omega^{1}_{\mathfrak{X}/\mathbb{P}^1})^{\vee\vee}$.
\begin{proposition}\label{prop-regulardiff}
\begin{enumerate}
\item[(1)] The sheaf of regular differentials $\widehat{\Omega}^1_{\mathfrak{X}/\mathbb{P}^1}$ is locally free of rank $n$.
\item[(2)] There are exact sequences
\begin{equation}\label{eq-regulardiff-exactseq-1}
\xymatrix{
  0 \ar[r] & \Omega^1_{Y_1/\Bbbk} \ar[r] & i_1^*\widehat{\Omega}^1_{\mathfrak{X}/\mathbb{P}^1} \ar[r] &
    j_{1*}\mathcal{O}_D \ar[r] & 0,\\
    }
\end{equation} 
and
\begin{equation}\label{eq-regulardiff-exactseq-2}
\xymatrix{   
    0 \ar[r] & \Omega^1_{Y_2/\Bbbk} \ar[r] & i_2^*\widehat{\Omega}^1_{\mathfrak{X}/\mathbb{P}^1} \ar[r] &
    j_{2*}\mathcal{O}_D \ar[r] & 0.
}
\end{equation}
\item[(3)] $N_{Y_1/D}\cong j_1^* \mathcal{O}_{Y_1}(D)$, $N_{Y_2/D}\cong j_2^* \mathcal{O}_{Y_2}(D)$, and $j_1^* \mathcal{O}_{Y_1}(D)\otimes j_2^* \mathcal{O}_{Y_2}(D)\cong \mathcal{O}_D$.
\end{enumerate}
\end{proposition}
\proof Near an ordinary double point $y$ on a fiber $\pi^{-1}(x)$, taking a suitable \'{e}tale neighborhood of $y$ on $\mathfrak{X}$, in local coordinates $\pi$ can be written as 
$\Bbbk[s]\rightarrow \Bbbk[y_1,\cdots,y_{n+1}]$ with $s\mapsto y_1 y_2$. By a computation in this local model, $(\Omega_{\mathfrak{X}/\mathbb{P}^1}^1)^{\vee}$ is locally free, and $(\Omega_{\mathfrak{X}/\mathbb{P}^1}^1)^{\vee\vee}$ is generated by $\Omega_{\mathfrak{X}/\mathbb{P}^1}^1$ and the meromorphic differential $\frac{dy_1}{y_1}=-\frac{dy_2}{y_2}$. Then the map $i_b^*\widehat{\Omega}^1_{\mathfrak{X}/\mathbb{P}^1}\rightarrow j_b \mathcal{O}_D$, for $b=1,2$, is the residue map, and the kernel is identified with $\Omega^1_{Y_b/\Bbbk}$. So we obtain (1) and (2).
 For (3), we use the adjunction formula, and the facts $\mathcal{O}_{Y_1}(D)=\mathcal{O}_{Y}(Y_2)|_{Y_1}$, $\mathcal{O}_{Y_2}(D)=\mathcal{O}_{Y}(Y_1)|_{Y_2}$, and $\mathcal{O}_{Y}(Y_1+Y_2)|_D\cong \mathcal{O}_D$. \pqed

By e.g. \cite{Vis89} and \cite{Kre99}, the intersection theory on algebraic space is well-defined and has the usual properties as for schemes.

\begin{theorem}\label{thm-chern-identity-doublepointdegeneration}
Let  $n_1,\cdots,n_m\geq 0$ be integers. Then in $\mathrm{CH}_*(\mathfrak{X})$ we have
\begin{multline}\label{eq-chern-identity-doublepointdegeneration}
i_{X*}\big(c_{n_1}(\Omega^1_{X/\Bbbk})\cdots c_{n_m}(\Omega^1_{X/\Bbbk})\cap[X]\big)=i_{1*}\big(c_{n_1}(\Omega^1_{Y_1/\Bbbk})\cdots c_{n_m}(\Omega^1_{Y_1/\Bbbk})\cap[Y_1])\\
+i_{2*}\big(c_{n_1}(\Omega^1_{Y_2/\Bbbk})\cdots c_{n_m}(\Omega^1_{Y_2/\Bbbk})\cap[Y_2]\big)
-i_{D*}\Big(\pi_{D*}\big(c_{n_1}(\Omega^1_{\mathbb{P}_D/\Bbbk})\cdots c_{n_m}(\Omega^1_{\mathbb{P}_D/\Bbbk})\cap[\mathbb{P}_D]\big)\Big).
\end{multline}
\end{theorem}
\begin{proof}  For a vector bundle $E$ and an indeterminate $t$, let $c(E)$ denote the total Chern class, and  $c_t(E)=\sum_{i=0}^{\infty}t^i c_i(E)$. Since $[X]=[\pi^{-1}(0)]$ in $\mathrm{CH}(\mathfrak{X})$, the theorem follows from the following  two equalities in $\mathrm{CH}(\mathfrak{X})[t_1,\dots,t_m]$:
\begin{equation}\label{eq-proof-thm-chern-identity-doublepointdegeneration-(-1)}
	c_{t_1}(\widehat{\Omega}^1_{\mathfrak{X}/\mathbb{P}^1})\cdots c_{t_m}(\widehat{\Omega}^1_{\mathfrak{X}/\mathbb{P}^1})\cap[X]=c_{t_1}(\Omega^1_{X/\Bbbk})\cdots c_{t_m}(\Omega^1_{X/\Bbbk})\cap[X]
\end{equation}
and
\begin{multline}\label{eq-proof-thm-chern-identity-doublepointdegeneration-0}
c_{t_1}(\widehat{\Omega}^1_{\mathfrak{X}/\mathbb{P}^1})\cdots c_{t_m}(\widehat{\Omega}^1_{\mathfrak{X}/\mathbb{P}^1})\cap[\pi^{-1}(0)]=c_{t_1}(\Omega^1_{Y_1/\Bbbk})\cdots c_{t_m}(\Omega^1_{Y_1/\Bbbk})\cap[Y_1]\\
+c_{t_1}(\Omega^1_{Y_2/\Bbbk})\cdots c_{t_m}(\Omega^1_{Y_2/\Bbbk})\cap[Y_2]
-\pi_{D*}\big(c_{t_1}(\Omega^1_{\mathbb{P}_D/\Bbbk})\cdots c_{t_m}(\Omega^1_{\mathbb{P}_D/\Bbbk})\cap[\mathbb{P}_D]\big).
\end{multline}
Here and in the following, the closed immersion pushforwards (of cycles) are omitted when there is no chance of confusion.  
By $i_{X}^* \widehat{\Omega}^1_{\mathfrak{X}/\mathbb{P}^1}=\Omega^1_{X/\Bbbk}$ and the projection formula we get (\ref{eq-proof-thm-chern-identity-doublepointdegeneration-(-1)}). We are left to show (\ref{eq-proof-thm-chern-identity-doublepointdegeneration-0}).
By Proposition \ref{prop-regulardiff}, 
\begin{eqnarray*}
c(i_b^*\widehat{\Omega}^1_{\mathfrak{X}/\mathbb{P}^1})=c(\Omega^1_{Y_b/\Bbbk})c(j_{b*}\mathcal{O}_D)=\frac{c(\Omega^1_{Y_b/\Bbbk})c(\mathcal{O}_{Y_b})}{c(\mathcal{O}_{Y_b}(-D))}=\frac{c(\Omega^1_{Y_b/\Bbbk})}{c(\mathcal{O}_{Y_b}(-D))}
\end{eqnarray*}
for $b=1,2$. Let $h_l(t_1,\cdots,t_m)$ denote the completely symmetric polynomial of degree $l$ of $t_1,\dots,t_m$, namely,
$$
\sum_{l=0}^{\infty}h_l(t_1,\cdots,t_m)x^l=\prod_{a=1}^{m}\frac{1}{1-t_a x}.
$$
Thus
\begin{eqnarray}\label{eq-proof-thm-chern-identity-doublepointdegeneration-1}
&& c_{t_1}(\widehat{\Omega}^1_{\mathfrak{X}/\mathbb{P}^1})\cdots c_{t_m}(\widehat{\Omega}^1_{\mathfrak{X}/\mathbb{P}^1})\cap[Y_1]\notag\\
&=& \prod_{a=1}^{m}\Big(c_{t_a}(\Omega^1_{Y_1/\Bbbk})\cdot\sum_{l=0}^{\infty}t_a^l c_1(\mathcal{O}_{Y_1}(D))^l\Big)\cap [Y_1]\notag\\
&=& \Big(\prod_{a=1}^{m}c_{t_a}(\Omega^1_{Y_1/\Bbbk})\cdot\sum_{l=0}^{\infty}h_l(t_1,\cdots,t_m) c_1(\mathcal{O}_{Y_1}(D))^l\Big)\cap [Y_1]\notag\\
&=& \prod_{a=1}^{m}c_{t_a}(\Omega^1_{Y_1/\Bbbk})\cap [Y_1]+
\prod_{a=1}^{m}\Big(c_{t_a}(\Omega^1_{Y_1/\Bbbk})\cdot
\sum_{l=1}^{\infty}h_l(t_1,\cdots,t_m) c_1(\mathcal{O}_{Y_1}(D))^{l-1}\Big)\cap[D].
\end{eqnarray}
Let $e_l(t_1,\cdots,t_m)$ be the elementary symmetric polynomial of $t_1,\dots,t_m$ of degree $l$. Then
\begin{equation}\label{eq-proof-thm-chern-identity-doublepointdegeneration-2}
	\prod_{a=1}^{m}(1-t_a x)\cdot \frac{1}{x}\Big(\prod_{a=1}^{m}\frac{1}{1-t_a x}-1\Big)
=\frac{1-\prod_{a=1}^{m}(1-t_a x)}{x}
=\sum_{l=1}^{m}(-1)^{l-1}e_{l}(t_1,\cdots,t_m)x^{l-1}.
\end{equation}
By the exact sequence
$$
\xymatrix{
  0 \ar[r] & N_{Y_1/D}^{\vee} \ar[r] & j_1^* \Omega^1_{Y_1/\Bbbk}\ar[r] & \Omega^1_{D/\Bbbk} \ar[r] & 0
}
$$
we have
\begin{equation}\label{eq-proof-thm-chern-identity-doublepointdegeneration-3}
c_t(j_1^* \Omega^1_{Y_1/\Bbbk})=c_t(\Omega^1_{D/\Bbbk})(1-t c_1(\mathcal{O}_{Y_1}(D))).
\end{equation}
From  (\ref{eq-proof-thm-chern-identity-doublepointdegeneration-2}) and (\ref{eq-proof-thm-chern-identity-doublepointdegeneration-3})  it follows that
\begin{eqnarray}\label{eq-proof-thm-chern-identity-doublepointdegeneration-4}
&&\prod_{a=1}^{m}\big(c_{t_a}(\Omega^1_{Y_1/\Bbbk})\cdot
\sum_{l=1}^{\infty}h_l(t_1,\cdots,t_m) c_1(\mathcal{O}_{Y_1}(D))^{l-1}\big)\cap[D]\notag\\
&=& \prod_{a=1}^{m}\big(c_{t_a}(\Omega^1_{D/\Bbbk})\cdot
\sum_{l=1}^{m}(-1)^{l-1}e_{l}(t_1,\cdots,t_m) c_1(\mathcal{O}_{Y_1}(D))^{l-1}\big)\cap[D].
\end{eqnarray}
By Proposition \ref{prop-regulardiff} (3),
\begin{equation}\label{eq-proof-thm-chern-identity-doublepointdegeneration-5}
j_1^* c_1(\mathcal{O}_{Y_1}(D))+j_2^* c_1(\mathcal{O}_{Y_2}(D))=0.
\end{equation}
So
\begin{eqnarray}\label{eq-proof-thm-chern-identity-doublepointdegeneration-6}
&& c_{t_1}(\widehat{\Omega}^1_{\mathfrak{X}/\mathbb{P}^1})\cdots c_{t_m}(\widehat{\Omega}^1_{\mathfrak{X}/\mathbb{P}^1})\cap[\pi^{-1}(0)]\notag\\
&=& \prod_{a=1}^{m}\Big(c_{t_a}(\Omega^1_{Y_1/\Bbbk})\cdot\sum_{l=0}^{\infty}t_a^l c_1(\mathcal{O}_{Y_1}(D))^l\Big)\cap [Y_1]\notag\\
&&+\prod_{a=1}^{m}\Big(c_{t_a}(\Omega^1_{Y_1/\Bbbk})\cdot\sum_{l=0}^{\infty}t_a^l c_1(\mathcal{O}_{Y_2}(D))^l\Big)\cap [Y_2]\notag\\
&\stackrel{\mbox{by (\ref{eq-proof-thm-chern-identity-doublepointdegeneration-1}) and (\ref{eq-proof-thm-chern-identity-doublepointdegeneration-4})}}{=}& \prod_{a=1}^{m}c_{t_a}(\Omega^1_{Y_1/\Bbbk})\cap [Y_1]+\prod_{a=1}^{m}c_{t_a}(\Omega^1_{Y_2/\Bbbk})\cap [Y_2]\notag\\
&&+\prod_{a=1}^{m}\big(c_{t_a}(\Omega^1_{D/\Bbbk})\cdot
\sum_{l=1}^{m}(-1)^{l-1}e_{l}(t_1,\cdots,t_m) c_1(\mathcal{O}_{Y_1}(D))^{l-1}\big)\cap[D]\notag\\
&&+\prod_{a=1}^{m}\big(c_{t_a}(\Omega^1_{D/\Bbbk})\cdot
\sum_{l=1}^{m}(-1)^{l-1}e_{l}(t_1,\cdots,t_m) c_1(\mathcal{O}_{Y_2}(D))^{l-1}\big)\cap[D]\notag\\
&\stackrel{\mbox{by (\ref{eq-proof-thm-chern-identity-doublepointdegeneration-5})}}{=}& \prod_{a=1}^{m}c_{t_a}(\Omega^1_{Y_1/\Bbbk})\cap [Y_1]+\prod_{a=1}^{m}c_{t_a}(\Omega^1_{Y_2/\Bbbk})\cap [Y_2]\notag\\
&&+2\prod_{a=1}^{m}\big(c_{t_a}(\Omega^1_{D/\Bbbk})\cdot
\sum_{0\leq l\leq \frac{m-1}{2}}^{m}e_{2l+1}(t_1,\cdots,t_m) c_1(\mathcal{O}_{Y_1}(D))^{2l}\big)\cap[D].
\end{eqnarray}

On the other hand, consider the exact sequences
\begin{equation}\label{eq-proof-thm-chern-identity-doublepointdegeneration-7}
	\xymatrix{
  0 \ar[r] & \pi_D^*\Omega^1_{D/\Bbbk} \ar[r] & \Omega^1_{\mathbb{P}_D/\Bbbk} \ar[r] & \Omega^1_{\mathbb{P}_D/D}  \ar[r] & 0,
  }
\end{equation}
and
\begin{equation}\label{eq-proof-thm-chern-identity-doublepointdegeneration-8}
 	\xymatrix{  
    0 \ar[r] & \Omega^1_{\mathbb{P}_D/D}  \ar[r] & \pi_D^*(N_{Y_1/D}^{\vee}\oplus \mathcal{O}_D)\otimes \mathcal{O}_{\mathbb{P}_D}(-1) \ar[r] & \mathcal{O}_{\mathbb{P}_D} \ar[r] & 0.
}
 \end{equation} 
From (\ref{eq-proof-thm-chern-identity-doublepointdegeneration-7}), one sees that 
$c_2\big(\pi_D^*(N_{Y_1/D}^{\vee}\oplus \mathcal{O}_D)\otimes \mathcal{O}_{\mathbb{P}_D}(-1))=0$, thus
\begin{equation}\label{eq-proof-thm-chern-identity-doublepointdegeneration-9}
	c_1(\mathcal{O}_{\mathbb{P}_D}(1))^2+\pi_D^* c_1(\mathcal{O}_{Y_1}(D))\cdot c_1(\mathcal{O}_{\mathbb{P}_D}(1))=0.
\end{equation}
So (\ref{eq-proof-thm-chern-identity-doublepointdegeneration-7})-(\ref{eq-proof-thm-chern-identity-doublepointdegeneration-9}) implies
\begin{eqnarray}\label{eq-proof-thm-chern-identity-doublepointdegeneration-10}
&&c(\Omega^1_{\mathbb{P}_D/\Bbbk})=\pi_D^* c(\Omega^1_{D/\Bbbk})\cdot c(\Omega^1_{\mathbb{P}_D/D})
=\pi_D^* c(\Omega^1_{D/\Bbbk})\cdot c\big(\pi_D^*(N_{Y_1/D}^{\vee}\oplus \mathcal{O}_D)\otimes \mathcal{O}_{\mathbb{P}_D}(-1)\big)\notag\\
&=& \pi_D^* c(\Omega^1_{D/\Bbbk})\cdot \big(1-\pi_D^* c_1(\mathcal{O}_{Y_1}(D))-c_1(\mathcal{O}_{\mathbb{P}_D}(1))\big)\big(1-c_1(\mathcal{O}_{\mathbb{P}_D}(1))\big)\notag\\
&=& \pi_D^* c(\Omega^1_{D/\Bbbk})\cdot \big(1-\pi_D^* c_1(\mathcal{O}_{Y_1}(D))-2c_1(\mathcal{O}_{\mathbb{P}_D}(1))\big).
\end{eqnarray}
Moreover, by (\ref{eq-proof-thm-chern-identity-doublepointdegeneration-9}), we have
\begin{eqnarray*}
&&\big(\pi_D^* c_1(\mathcal{O}_{Y_1}(D))+2c_1(\mathcal{O}_{\mathbb{P}_D}(1))\big)^2\\
&=&\pi_D^* c_1(\mathcal{O}_{Y_1}(D))^2+4\pi_D^* c_1(\mathcal{O}_{Y_1}(D))\cdot c_1(\mathcal{O}_{\mathbb{P}_D}(1)) +4c_1(\mathcal{O}_{\mathbb{P}_D}(1))^2\\
&=& \pi_D^* c_1(\mathcal{O}_{Y_1}(D))^2,
\end{eqnarray*}
and thus
\begin{eqnarray}\label{eq-proof-thm-chern-identity-doublepointdegeneration-11}
\big(\pi_D^* c_1(\mathcal{O}_{Y_1}(D))+2c_1(\mathcal{O}_{\mathbb{P}_D}(1))\big)^l=
\begin{cases}
 \pi_D^* c_1(\mathcal{O}_{Y_1}(D))^{l},& \mbox{if}\ 2|l\\
 \pi_D^* c_1(\mathcal{O}_{Y_1}(D))^{l-1}\cdot \big(\pi_D^* c_1(\mathcal{O}_{Y_1}(D))+2c_1(\mathcal{O}_{\mathbb{P}_D}(1))\big),& \mbox{if}\ 2\nmid l.
\end{cases}
\end{eqnarray}
By (\ref{eq-proof-thm-chern-identity-doublepointdegeneration-10}),
\begin{eqnarray*}
&&c_{t_1}(\Omega^1_{\mathbb{P}_D/\Bbbk})\cdots c_{t_m}(\Omega^1_{\mathbb{P}_D/\Bbbk})\cap[\mathbb{P}_D]\\
&=&\prod_{i=1}^{m}\pi_D^* c_{t_i}(\Omega^1_{D/\Bbbk})\cdot \prod_{i=1}^m\big(1-t_i\pi_D^* c_1(\mathcal{O}_{Y_1}(D))-2t_i c_1(\mathcal{O}_{\mathbb{P}_D}(1))\big)\cap[\mathbb{P}_D]\\
&=& \prod_{i=1}^{m}\pi_D^* c_{t_i}(\Omega^1_{D/\Bbbk})\cdot \sum_{l=0}^m(-1)^l e_l(t_1,\cdots, t_m)
\big(\pi_D^* c_1(\mathcal{O}_{Y_1}(D))+2c_1(\mathcal{O}_{\mathbb{P}_D}(1))\big)^l\cap[\mathbb{P}_D].
\end{eqnarray*}
Therefore by (\ref{eq-proof-thm-chern-identity-doublepointdegeneration-11}) and $c_1((\mathcal{O}_{\mathbb{P}_D}(1))\cap[\mathbb{P}_D]=[D]$ (see e.g. \cite[Proposition 3.1(a)]{Ful98}), and the projection formula, we get
\begin{eqnarray}\label{eq-proof-thm-chern-identity-doublepointdegeneration-12}
&&\pi_{D*}\big(c_{t_1}(\Omega^1_{\mathbb{P}_D/\Bbbk})\cdots c_{t_m}(\Omega^1_{\mathbb{P}_D/\Bbbk})\cap[\mathbb{P}_D]\big)\notag\\
&=&-2\big(\prod_{i=1}^{m} c_{t_i}(\Omega^1_{D/\Bbbk})\cdot \sum_{0\leq l\leq \frac{m-1}{2}} e_{2l+1}(t_1,\cdots, t_m)c_1(\mathcal{O}_{Y_1}(D))^{2l}\big)\cap[D].
\end{eqnarray}
Combining (\ref{eq-proof-thm-chern-identity-doublepointdegeneration-6}) and (\ref{eq-proof-thm-chern-identity-doublepointdegeneration-12}) we  obtain 
(\ref{eq-proof-thm-chern-identity-doublepointdegeneration-0}).
\end{proof}

\end{appendices}

\textsc{School of Sciences, Great Bay University, Dongguan, P.R. China}

 \emph{E-mail address:}  huxw06@gmail.com

\end{document}